\newcommand{\vertiii}[1]{{\left\vert\kern-0.25ex\left\vert\kern-0.25ex\left\vert #1
    \right\vert\kern-0.25ex\right\vert\kern-0.25ex\right\vert}}
 \def\MR#1{} 
\newcommand\black{\color{black}}
\let\cal=\mathcal      
\def\mcc{M\raise.5ex\hbox{c}C}
\def\mccarthy{M\raise.5ex\hbox{c}Carthy}
\def\eg{{\it e.g. }}
\def\ie{{\it i.e. }}
\def\M{{\cal M}}
\def\z{\zeta}
\def\vare{\varepsilon}
\let\i=\infty
\def\la{\langle}
\def\ra{\rangle}
\def\={\ = \ }
\def\I{{\cal I}}
\def\C{\mathbb C}
\def\R{\mathbb R}
\def\D{\mathbb D}
\def\B{\mathbb B}
\def\be{\setcounter{equation}{\value{theorem}} \begin{equation}}
\def\ee{\end{equation} \addtocounter{theorem}{1}}
\def\beq{\begin{eqnarray*}}
\def\eeq{\end{eqnarray*}}
\def\se{\setcounter{equation}{\value{theorem}}} 
\def\att{\addtocounter{theorem}{1}}
\def\vs{\vskip 5pt}
\def\bl{\begin{lemma}}
\def\el{\end{lemma}}
\def\bt{\begin{theorem}}
\def\et{\end{theorem}}
\def\bprop{\begin{prop}}
\def\eprop{\end{prop}}
\def\bd{\begin{definition}}
\def\ed{\end{definition}}
\def\br{\begin{remark}}
\def\er{\end{remark}}
\def\bexer{\begin{exercise}}
\def\eexer{\end{exercise}}
\newtheorem{theorem}{Theorem}[section]
\newtheorem{proposition}[theorem]{Proposition}
\newtheorem{lemma}[theorem]{Lemma}
\newtheorem{definition}[theorem]{Definition}
\newtheorem{question}[theorem]{Question}
\newtheorem{example}[theorem]{Example}
\renewcommand\Re{\mathrm{Re\, }}
\newcommand{\cK}{\mathcal K}
\newcommand{\bN}{\mathbb N}
\newcommand{\bB}{\mathbb B}
\newcommand{\bD}{\mathbb D}
\newcommand{\ol}[1]{\overline{#1}}
\DeclareMathOperator{\Mult}{Mult}
\newcommand\smf{\vertiii{f}}
\newcommand\smg{\vertiii{ g }}
\newcommand\smfc{\vertiii{\phic }}
\newcommand\npu{N^+_u}
\newcommand\nph{N^+(\htd)}
\newcommand\nevu{N_u}
\newcommand\oma{\omega_\alpha}
\newcommand\omat{\omega_\alpha^2}
\newcommand\mm{{\rm Mult}(H^2_d)}
\newcommand\phic{{\phi_c}}
\newcommand\tms{T_m^*}
\newcommand\htd{{H^2_d}}
\newcommand\ran{{\rm ran}}
\newcommand\Inl{{\I}_{\rm nlc}}
\def\IH{{\mathcal I}}
\def\II{{\mathcal I}_{\rm nlc}}
\newcommand\dF{d_F}
\newcommand\dL{d_L}
\newtheorem{lem}[theorem]{Lemma}
\newtheorem{prop}[theorem]{Proposition}
\newtheorem{cor}[theorem]{Corollary}
\theoremstyle{definition}
\theoremstyle{remark}
\newtheorem{rem}[theorem]{Remark}
\newtheorem*{rem*}{Remark}
\newtheorem*{exa*}{Example}
\title[Common Range]{The common range of co-analytic Toeplitz operators on the Drury-Arveson space}
\date{\today}
\author[A. Aleman]{Alexandru Aleman}
\address{Lund University, Mathematics, Faculty of Science, P.O. Box 118, S-221 00 Lund, Sweden}
\email{alexandru.aleman@math.lu.se}
\author[M. Hartz]{Michael Hartz}
\address{Fachrichtung Mathematik, Universit\"at des Saarlandes, 66123 Saarbr\"ucken, Germany}
\email{hartz@math.uni-sb.de}
\thanks{M.H. was partially supported by a GIF grant.}
\author[J. M\raise.5ex\hbox{c}Carthy]{John E. M\raise.5ex\hbox{c}Carthy}
\address{Department of Mathematics, Washington University in St. Louis, One Brookings Drive,
	St. Louis, MO 63130, USA}
\email{mccarthy@wustl.edu}
\thanks{J.M. was partially supported by National Science Foundation Grant DMS 2054199}
\author[S. Richter]{Stefan Richter}
\address{Department of Mathematics, University of Tennessee, 1403 Circle Drive, Knoxville, TN 37996-1320, USA}
\email{srichter@utk.edu}
\keywords{Smirnov class, Drury-Arveson space, Toeplitz operator, Inductive limit}
\subjclass[2010]{Primary 32A35, 46A13, 47B35}
\begin{document}

\bibliographystyle{plain}
\maketitle

\begin{abstract}
  We characterize the common range of the adjoints of cyclic multiplication operators on the Drury--Arveson space.
  We show that a function belongs to this common range if and only if its Taylor coefficients
  satisfy a simple decay condition.
  To achieve this, we introduce the uniform Smirnov class on the ball and determine its dual space.
We show that the dual space of the uniform Smirnov class equals the dual space of the strictly smaller
Smirnov class of the Drury-Arveson space, and that this in turn equals the common range of the adjoints of cyclic multiplication operators.
\end{abstract}

\section{Introduction}
\label{secintro}

\subsection{Known results on the disk}

Let $H^2$ denote the Hardy space on the unit disk $\D$, and let $H^\infty$ denote the bounded analytic
functions on $\D$. If $m \in H^\infty$ we let $T_m : H^2 \to H^2,\ f \mapsto mf $ denote the multiplication operator. We call its adjoint $T_m^*$ a co-analytic Toeplitz operator. In \cite{hel90}, H. Helson asked
\begin{question}
\label{quA1}
What functions are in the range of $T_m^*$ for every outer function $m$ in $H^\infty$?
\end{question}

He was led to this question from the following considerations.
 The Nevanlinna class of $\D$  is defined as
\be
\label{eqa1}
N(\D) \= \Big\{ f : f\ {\rm holomorphic \ on\ } \D, \
  \sup_{0 < r < 1} \int \log(1 + |f(re^{i\theta})|)d \theta < \infty
\Big\} .
\ee
The Smirnov class $N^+(\D)$ is a subset of functions in $N(\D)$:
\be
\label{eqa2}
N^+(\D)
 \= \Big\{ f \in N(\D) \ : \  \lim_{r \uparrow 1} \ \int
 \log [ 1 + |f( e^{i\theta} ) - f(r  e^{i\theta} )| ] d\theta  \= 0  \Big\},
\ee
where $f( e^{i\theta} )$ is the non-tangential limit, which exists a.e. \cite[Thm. II.5.3]{gar81}.
It can be shown (see \eg \cite[Sec. II.5]{gar81}) that
\se\att
\begin{eqnarray}
\label{eqa3}
N(\D) &\=& \{ f = g/m : \ g \in H^2,\  m \in H^\infty, m\neq 0 {\rm  \ on\ } \D \} \\
\att
N^+(\D) &\=& \{ f = g/m : \ g \in H^2, \  m\ {\rm outer\ } \}
\label{eqa4}\\
\nonumber
&=&   \{ f = q/m : \ q \in H^\infty,  m\ {\rm outer\ } \} .
\end{eqnarray}
We shall use {\em outer} throughout the note to mean an outer function in $H^\infty$.
Helson considered
the locally convex inductive limit topology $\I$ on $N^+(\D)$ that comes from viewing it
as
\[
N^+(\D) \= \bigcup \Big\{ \frac{1}{m} H^2 \ : \   m\ {\rm outer\ } \Big\} .
\]
First we put a norm on each $\frac{1}{m} H^2$ by declaring
\[
\| g \|_{\frac{1}{m} H^2} \ := \ \| mg \|_{H^2}  ,
\]
(so dividing by $m$ is a unitary from $ H^2$ onto $\frac{1}{m} H^2$).
Then $\I$ is the translation-invariant topology that has a neighborhood base at zero
consisting of all absolutely convex sets $\Omega$ with the property that
$\Omega \cap \frac{1}{m} H^2$ is open for each outer function $m$
(this is the same as saying that $m \Omega \cap H^2$ is open in $H^2$ for every outer
$m$).
In other words, $\mathcal{I}$ is the finest locally convex topology
on $N^+(\mathbb{D})$ that makes all inclusions $\frac{1}{m} H^2 \hookrightarrow N^+(\mathbb{D})$ continuous.
Helson proved in \cite{hel90} the following theorem:
\bt
Let $h(z) = \sum_{n=0}^\infty \gamma_n z^n$ be in $H^2$. Then the functional
\[
\Gamma: \sum b_n z^n \ \mapsto \ \sum b_n \overline{\gamma_n}
\]
is a continuous linear functional on $(N^+(\D), \I)$ if and only if
$h$ is in the range of $T_m^*$ for every  outer function $m$.
\et
He asked for an intrinsic characterization of such $h$. One can view the condition of Question \ref{quA1} in some ways as a smoothness condition.
Indeed, if $m(z) = 1-z$ and $\tms f = h,$ then denoting the $k$-th Taylor coefficient of a holomorphic
function $g$ by $\widehat{g}(k)$,
we have
$\widehat{h} (k) = \widehat{f}(k) - \widehat{f} (k+1)$. So $h$ has a telescoping Taylor series when evaluated at $1$, and
 by Abel's theorem $\lim_{r \uparrow 1} h(r 1)$ exists. Similarly looking at symbols that are powers
of $(e^{i\theta} - z)$, one can show that if $h$ is  in $\bigcap_{m\ {\rm outer}} {\rm ran}(T_m^*)$, all its derivatives have radial limits everywhere on the unit circle.

There is another topology on $N^+(\mathbb{D})$. One can define a metric on $N(\D)$ by
\[
\rho(f,g) \ = \
 \sup_{0< r < 1} \int \log ( 1 + |[f-g] (r e^{i \theta}) |) d \theta .
\]
The Nevanlinna class with the metric $\rho$ is not a topological vector space, because
scalar multiplication is not continuous \cite{sha-shi}. On the smaller Smirnov class,
$\rho$ makes it into a topological vector space with a complete translation invariant metric
(however it is not locally convex).  The dual of $(N^+(\D), \rho)$ had been found by N. Yanagihara \cite{yan73a}:
\bt
The functional
\[
\Gamma: \sum b_n z^n \ \mapsto \ \sum b_n \overline{\gamma_n}
\]
is a continuous linear functional on $(N^+(\D), \rho)$ if and only if
there exist constants $M,c > 0$ so that
\be
\label{eqA6}
| \gamma_n | \ \leq \ M e^{-c \sqrt{n}}  \quad \text{ for all } n \in \mathbb{N}.
\ee
\et
In \cite{mcc90c} it was shown that the inclusion map from $(N^+(\D), \rho)$ to $(N^+(\D), \I)$
is continuous, and moreover $\I$ is the finest locally convex topology such that inclusion is continuous.
Consequently, both spaces have the same dual, and hence Question \ref{quA1} was answered.
\bt
\label{thmcr1}
A function $h = \sum \hat h(n) z^n$ in  $ H^2$ is in the range of $T_m^*$ for every outer function $m$ in $H^\infty$ if and only if
there exist  $M,c > 0$ so that
\be
\label{eqA61}
| \hat h(n)  | \ \leq \ M e^{-c \sqrt{n}}  \quad \text{ for all } n \in \mathbb{N}.
\ee
\et

\begin{remark}
\label{remA1}
If $m$ is any non-zero function in $H^\infty$, the range of $T_m^*$ depends only on its outer part.
Indeed, factoring $m=u m_o$ where $u$ is inner and $m_o$ is outer, we have that
$T_m^* uf = T_{m_o}^* f$
and $T_m^*$ annihilates $(uH^2)^\perp$.
So Question \ref{quA1} is the same
as asking for a characterization of
\be
\label{eqA8}
\bigcap \{ {\rm ran}(T_m^*) \ : \ m \in H^\infty, \ {\rm not\ identically\ } 0 \}.
\ee
\end{remark}

\begin{remark}
Functions satisfying the smoothness condition \eqref{eqA6} come up in multiple places.
For example, in \cite{davmcc} they are shown to be the functions that are the multipliers of every de Branges-Rovnyak space $H(b)$ when $b$ is not an extreme point of the ball of $H^\infty$; see
\cite{sar94} for a treatment of these spaces. In \cite{bfv}, the authors give  bounds on the number
of zeroes such functions can have in $\overline{\D}$, and discuss how the class coincides with a certain class of Cauchy transforms.
\end{remark}

\subsection{New results on the ball}

Let $d$ be a positive integer, and $\B_d$ denote the unit ball in $\C^d$.
Let us define the Drury-Arveson space $H^2_d$ to be the Hilbert space of holomorphic functions
on $\B_d$ with reproducing kernel given by
\[
k(z, \lambda) \= \frac{1}{1 - \la z, \lambda \ra_{\C^d}} .
\]
We let ${\mathbb N}$ denote the natural numbers (including $0$).
The monomials $\{ z^\alpha : \alpha \in {\mathbb N}^d \}$ form an orthogonal basis for $H^2_d$. Their norms will come up
frequently, so we shall define
\be
\label{eqA13}
\oma \ :=\  \| z^\alpha \|_{\htd} \= \sqrt{\frac{\alpha_1 !\dots \alpha_d !}{(\alpha_1 + \dots + \alpha_d)!}} .
\ee
For many questions regarding multivariable operator theory, the Drury--Arveson space turns
out to be the right generalization of $H^2$ to the unit ball; see \cite{Arveson98, Shalit13} for background on $H^2_d$.

We shall let $\mm$ denote the multiplier algebra of $H^2_d$. For each $m$ in $\mm$, we shall
let \[
T_m : f \mapsto mf
\] denote the multiplication operator, and we shall call its adjoint $\tms$ a co-analytic Toeplitz operator on $\htd$. A multiplier $m$  is called cyclic if ${\rm ran}(T_m)$ is dense in $H^2_d$. Our goal is to answer the $d$-dimensional version of Question \ref{quA1}.
\begin{question}
\label{quAB1}
What functions are in the range of $\tms$ for every cyclic $m$ in $\mm$?
\end{question}
In analogy to \eqref{eqa3} and \eqref{eqa4}, we define the Nevanlinna and Smirnov classes
of $H^2_d$ by
\begin{eqnarray*}
\label{eqa7}
N(H^2_d) &=& \{ f = g/m : \ g \in H^2_d, m \in \mm, m\neq 0 {\rm  \ on\ } \B_d \} \\
N^+(H^2_d) &=& \{ f = g/m : \ g \in H^2_d, m \in \mm, m\ {\rm cyclic\ } \} .
\label{eqa8}
\end{eqnarray*}
In \cite{ahmrSm} we proved that $N^+(H^2_d)$ is equal to
\[
 \{ f = \phi/m : \ \phi \in \mm, m \in \mm, m\ {\rm cyclic\ } \} .
\]

As before, let us define
the locally convex inductive limit topology $\I$ on $N^+(H^2_d)$.
We norm each $\frac{1}{m} H^2_d$ by
\[
\| g \|_{\frac{1}{m} H^2_d} \ := \ \| mg \|_{H^2_d}.
\]
Then $\I$ is the translation-invariant topology that has a neighborhood base at zero
consisting of all absolutely convex sets $\Omega$ with the property that
$\Omega \cap \frac{1}{m} H^2_d$ is open for each cyclic multiplier $m$.
Equivalently, $\mathcal{I}$ is the finest locally convex topology on $N^+(H^2_d)$
that makes all inclusions $\frac{1}{m} H^2_d \hookrightarrow N^+(H^2_d)$ continuous.
The polynomials are dense in $(N^+(H^2_d),\mathcal{I})$, and
Helson's theorem remains true; we prove this in Section \ref{secd}.
\bt
\label{thmAB1}
The function $h \in H^2_d$ is in $\bigcap \{ {\rm ran}(\tms)  :  m \ {\rm cyclic} \}$ if and only if
the functional
\[
\Gamma_h : f \mapsto \la f , h \ra_{\htd}
\]
extends to be a continuous linear functional on $(\nph, \I)$.
\et
To characterize these functions by their Taylor coefficients, we need to study an appropriate version
of \eqref{eqa2}.
In \cite{rud80} W. Rudin defined the Nevanlinna and Smirnov classes on the unit ball
$\B_d$ in $\C^d$ analogously to \eqref{eqa1} and \eqref{eqa2}, replacing integration over the circle with
integration over the sphere. He conjectured that the two classes were equal for $d \geq 2$, but
the solution of the inner function problem on the ball showed that this was not true. See \cite{rud86}.

In this note we shall study holomorphic functions on the ball that are uniformly in the Smirnov class
on every disk through the origin. So we shall let
 $\nevu$ denote the holomorphic functions on the ball $\B_d$ such that
\be
\label{eqa41}
\smf \ : = \
\sup_{\zeta \in \partial {\mathbb B}_d} \sup_{0< r < 1} \int \log ( 1 + |f (r e^{i \theta} \zeta) |) \frac{d \theta}{2 \pi}
\ee
is finite.  This is not a norm, but we can define a metric by
\[
\rho(f,g) =  \vertiii{f-g} =
\sup_{\zeta \in \partial {\mathbb B}_d} \sup_{0< r < 1} \int \log ( 1 + |[f-g] (r e^{i \theta} \zeta) |) \frac{d \theta}{2 \pi} .
\]
We define the uniform Smirnov class $\npu$ by
\be
\label{eqa5}
\npu \= \{ f \in \nevu : \lim_{r \uparrow 1} \rho(f_r, f) = 0 \} ,
\ee
where $f_r(z) := f(rz)$.
We shall show in Proposition~\ref{lem:F-space} that $(\npu, \rho)$ is an F-space, \ie a topological vector space in which
the topology comes from a complete translation invariant metric, and that the polynomials are dense in $(\npu, \rho)$. Moreover we show that multiplication is continuous, so $(\npu, \rho)$ is a topological algebra.
(It can be shown that when $d=1$ the  definitions of the Smirnov class from \eqref{eqa2} and \eqref{eqa5}
coincide. We were unable to find a convenient reference, so we include a proof in Proposition \ref{prl1}.)

In Theorem \ref{thmH1} we prove that $(\nph, \I) $  embeds in
$(\npu, \rho)$, but the inclusion is not continuous, and  is proper whenever $d \geq 2$.
Indeed, in Section~\ref{secf} we prove that $N(H^2_d)$ does not even contain the ball algebra.
\bt
\label{cor:ball_nevanlinna}
 For all $d \geq 2$ there is a holomorphic function $f$
on $\B_d$ that is continuous up to the boundary, but is not in $N(H^2_d)$.
\et
Next, we find the dual of $(\npu, \rho)$. We prove in Section \ref{secc}:
\bt
\label{thmAB3}
The functional
\be
\label{eqA14}
\Gamma : f \mapsto \sum_{\alpha \in {\mathbb N}^d} \widehat{f}(\alpha) \overline{\gamma_\alpha}
\ee
is in $(\npu, \rho)^*$ if and only if there exist constants $M,c > 0$ so that
\be
\label{eqAB4}
| \gamma_\alpha| \ \leq \ M \oma e^{-c \sqrt{|\alpha|}} \quad \text{ for all } \alpha \in \mathbb{N}^d.
\ee
In this case, the series in \eqref{eqA14} converges absolutely for every $f \in N_u^+$.
\et
Since $\nph \subsetneq \npu$, and the inclusion is not even continuous,
 Theorem \ref{thmAB3} would not seem strong enough to answer
Question \ref{quAB1}. Nevertheless, our most surprising result is that it is, and $(\nph,\I)$ and
$(\npu, \rho)$ have the same duals.
Putting all these results together we get our principal result.
\bt
\label{thmH2}
Let $h \in H^2_d$ and let $\Gamma = \langle \cdot,h \rangle_{\htd}$ be the associated
  linear functional. The following assertions are equivalent:
  \begin{enumerate}[label=\normalfont{(\roman*)}]
    \item $\Gamma \in (N^+_u,\rho)^*$,
    \item $\Gamma \in (N^+(H^2_d),\mathcal{I})^*$,
    \item $h \in \ran(T_m^*)$ for every cyclic $m \in \Mult(H^2_d)$,
    \item there exist $M,c > 0$ so that
      \be
      \label{eqAB5}
        |\widehat{h}(\alpha)| \oma \le M e^{-c \sqrt{|\alpha|}}
      \ee
      for all $\alpha \in \mathbb{N}^d$.
  \end{enumerate}
  In this case, the functional $\Gamma$ on $N_u^+$ or on $N^+(H^2_d)$ is given
  by
  \begin{equation*}
    \Gamma(f)
=\sum_{n = 0}^\infty \la f_n,h_n\ra_{H^2_d}= \sum_{n=0}^\infty\sum_{|\alpha|=n }\omega_\alpha^2 \widehat{f}(\alpha)\overline{\widehat{h}(\alpha)},
  \end{equation*}
  where both sums converge absolutely, and $f = \sum_{n=0}^\infty f_n$
  and $h = \sum_{n=0}^\infty h_n$ are the homogeneous expansions of $f$ and $h$, respectively.

\et
Note: The reason $\omega_\alpha$ appears on the left in \eqref{eqAB5} and on the right in \eqref{eqAB4}
is the way we define $\Gamma$. In Theorem \ref{thmH2} we use the $\htd$ inner product, which
is the form we will use in our proofs, and
in Theorem \ref{thmAB3} we used the $\ell^2$ inner product on the coefficients, just to make the statement of the theorem more succinct.
This means that $\gamma_\alpha = \omat \widehat{h}(\alpha)$.

We prove Theorem~\ref{thmH2} in Sections \ref{secd} and \ref{secnec}.

In Section \ref{secfr}, we prove that $(\npu, \I)$ can be realized as a dense subspace
of a Fr\'echet space, i.e. a complete locally convex metrizable space, so in particular
the topology $\I$ on $\npu$ is metrizable.
In Section \ref{secil} we have an appendix on inductive limit topologies.

\section{Basic properties of \texorpdfstring{$\npu$}{the uniform Smirnov class}}
\label{secBA}

The following lemma is proved in \cite[II.3.1 and II.11.2]{prig}.
\bl
\label{lemb3}
Let $g \in N^+(\D)$. Then for any $z \in \D$, we have
\be
\label{eqaa5}
|g(z) | \ \leq \ e^{\frac{2 \smg}{1-|z|}} - 1.
\ee

Moreover,  its Taylor coefficients satisfy
\be
\label{eqaa4}
| \widehat g (n) | \ \leq \ e^{\sqrt{8 n \smg  }(1 + \vare_n )} ,
\ee
where $\vare_n$ is $o(1)$, depends only on $\smg$, and decreases as $\smg$ decreases.
\el

For $\zeta \in \partial \mathbb{B}_d$, let
\begin{equation*}
  i_\zeta: \mathbb{D} \to \mathbb{B}_d, \quad z \mapsto z \zeta.
\end{equation*}
If $f \in \mathcal{O}(\mathbb{B}_d)$, then $f \circ i_\zeta \in \mathcal{O}(\mathbb{D})$
is the slice function along the direction $\zeta$.
Clearly, we have $\vertiii{f \circ i_\zeta} \le \vertiii{f}$.
So if $f \in \npu$, then $f \circ i_\zeta \in N(\D)$ and $\lim_{r \to 1} \vertiii{f \circ i_\zeta - (f \circ i_{\zeta})_r} = 0$,
which implies that $f \circ i_{\zeta} \in N^+(\D)$ by Proposition \ref{prl1}.
So
we get from \eqref{eqaa5} that
\be
\label{eqaa6}
f \in \npu \ \Rightarrow \
|f(w) | \ \leq \ e^{\frac{2 \smf}{1-|w|}} - 1, \qquad \forall \ w \in \B_d .
\ee

\begin{prop}
  \label{lem:F-space}
  $(\npu, \rho)$ is an F-space, and multiplication is continuous.
  Moreover,  the polynomials are dense.
\end{prop}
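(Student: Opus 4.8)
The plan is to run, one slice at a time, the classical construction that makes the Smirnov class on the disk an F-space, while keeping track of the extra supremum over $\zeta\in\partial\mathbb{B}_d$. Almost everything reduces to three elementary pointwise inequalities for $x,|a|,|b|\ge 0$ and a scalar $\lambda$: subadditivity $\log(1+|a+b|)\le\log(1+|a|)+\log(1+|b|)$, from $1+|a+b|\le(1+|a|)(1+|b|)$; its multiplicative analogue $\log(1+|ab|)\le\log(1+|a|)+\log(1+|b|)$; and $\log(1+|\lambda|x)\le\max(1,|\lambda|)\log(1+x)$, from Bernoulli's inequality $(1+x)^{c}\ge 1+cx$ with $c=\max(1,|\lambda|)$. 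Integrating over $\theta$ and taking suprema, these give $\vertiii{a+b}\le\vertiii{a}+\vertiii{b}$, $\vertiii{ab}\le\vertiii{a}+\vertiii{b}$ and $\vertiii{\lambda g}\le\max(1,|\lambda|)\vertiii{g}$. Together with the dilation estimate $\vertiii{g_r}\le\vertiii{g}$ (immediate from the definition, since for $0<r<1$ the product $rs$ runs over a subinterval of $(0,1)$ as $s$ does) and definiteness of $\vertiii{\,\cdot\,}$, they show at once that $\rho$ is a translation-invariant metric, that $\nevu$ and $\npu$ are vector spaces, and that addition is continuous.

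The first real point is that scalar multiplication is continuous, so $(\npu,\rho)$ is a topological vector space. Using $\vertiii{\lambda g}\le\max(1,|\lambda|)\vertiii{g}$ and a routine three-term estimate, this reduces to showing $\lim_{t\to 0}\vertiii{tf}=0$ for each $f\in\npu$, and here I would use the device that underlies the whole argument: given $\varepsilon>0$, choose $s<1$ with $\vertiii{f-f_s}<\varepsilon/2$ (possible since $f\in\npu$); then $f_s$ is holomorphic on the ball of radius $1/s>1$, hence bounded by some $M_s$ on $\overline{\mathbb{B}_d}$, so for $|t|\le 1$
\[
  \vertiii{tf}\le\vertiii{t(f-f_s)}+\vertiii{tf_s}\le\vertiii{f-f_s}+\log(1+|t|M_s)<\varepsilon
\]
once $|t|$ is small enough. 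This is precisely the step that fails for the larger class $\nevu$ --- it is what forces us to pass to $\npu$ --- and I expect it, together with its companion in the multiplication step below, to be the main obstacle.

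For completeness, let $(f_n)$ be $\rho$-Cauchy. Applying the pointwise bound \eqref{eqaa6} to $f_n-f_m\in\npu$ shows $(f_n)$ is uniformly Cauchy on every ball $\{|w|\le\delta\}$, hence converges locally uniformly on $\mathbb{B}_d$ to some $f\in\mathcal{O}(\mathbb{B}_d)$. A Fatou-type step then gives $f\in\nevu$ with $\vertiii{f_n-f}\to 0$: for fixed $n$, $\zeta$ and $r$ the $f_m$ converge to $f$ uniformly on the circle $\{re^{i\theta}\zeta:\theta\in[0,2\pi]\}$, so
\[
  \int\log\bigl(1+|f_n-f|(re^{i\theta}\zeta)\bigr)\frac{d\theta}{2\pi}=\lim_{m}\int\log\bigl(1+|f_n-f_m|(re^{i\theta}\zeta)\bigr)\frac{d\theta}{2\pi}\le\limsup_m\vertiii{f_n-f_m},
\]
and one takes the supremum over $\zeta$ and $r$. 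Finally $f\in\npu$, because $\vertiii{f_r-f}\le 2\vertiii{f-f_n}+\vertiii{(f_n)_r-f_n}$ (using the dilation estimate on $f-f_n\in\nevu$): first make $\vertiii{f-f_n}$ small, then let $r\uparrow 1$ with $n$ fixed using $f_n\in\npu$. Combined with the previous paragraph, this makes $(\npu,\rho)$ an F-space.

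It remains to prove that polynomials are dense and that multiplication is continuous. Polynomials lie in $\npu$ since $\vertiii{p}\le\sup_{\overline{\mathbb{B}_d}}|p|$ and $\vertiii{p_r-p}\le\sup_{\overline{\mathbb{B}_d}}|p_r-p|\to 0$ (from $\log(1+x)\le x$); conversely, for $f\in\npu$ and $\varepsilon>0$ one picks $r<1$ with $\vertiii{f_r-f}<\varepsilon/2$ and then a partial sum $P$ of the homogeneous expansion of $f_r$, which converges uniformly on $\overline{\mathbb{B}_d}$ because $f_r\in\mathcal{O}(\mathbb{B}_{1/r})$, with $\vertiii{P-f_r}\le\sup_{\overline{\mathbb{B}_d}}|P-f_r|<\varepsilon/2$, so $\vertiii{P-f}<\varepsilon$. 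For multiplication the key lemma is: if $\vertiii{w_n}\to 0$ and $v\in\npu$, then $\vertiii{w_n v}\to 0$; I would prove it with the same dilation device, writing $v=v_s+(v-v_s)$ with $v_s$ bounded by $M_s$ on $\overline{\mathbb{B}_d}$, so that $\vertiii{w_n v_s}\le\max(1,M_s)\vertiii{w_n}$ (monotonicity of $\log(1+\,\cdot\,)$ together with $\vertiii{M_s w_n}\le\max(1,M_s)\vertiii{w_n}$) and $\vertiii{w_n(v-v_s)}\le\vertiii{w_n}+\vertiii{v-v_s}$. Joint continuity of multiplication then follows from the identity $u_nv_n-uv=(u_n-u)v+u(v_n-v)+(u_n-u)(v_n-v)$, the first two terms controlled by the lemma and the last by $\vertiii{ab}\le\vertiii{a}+\vertiii{b}$; the same identity with $f_r,g_r$ in place of $u_n,v_n$ shows that $\npu$ is closed under products. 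As indicated above, the substance of the argument is concentrated in the dilation-approximation device --- a function in $\npu$ is, up to arbitrarily small $\rho$-error, holomorphic across $\overline{\mathbb{B}_d}$ and hence uniformly bounded there --- and it is this that rescues both continuity of scalar multiplication (which fails on the full Nevanlinna class) and continuity of multiplication.
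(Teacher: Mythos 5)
Your proof is correct and follows essentially the same route as the paper's: the same elementary logarithmic inequalities, the same algebraic identity reducing joint continuity of multiplication to separate continuity, and the same dilation device ($f_r$ is bounded on $\overline{\B_d}$ and $\vertiii{f-f_r}$ is small) for both the multiplication estimate and the density of polynomials. The only substantive difference is the completeness step: you bound $\vertiii{f_n-f}$ directly by passing to the limit in $m$ on each circle $\{re^{i\theta}\zeta\}$ with $r<1$ (using locally uniform convergence) and then taking suprema, whereas the paper invokes completeness of $N^+(\D)$ on each slice and argues uniformity in $\zeta$; your version is self-contained, avoids the citation, and is equally valid.
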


\begin{proof}
  It is straightforward to check that $\rho$ defines a metric on $N_u^+$.
  Observe that if $f \in N_u$,
  $\lambda \in \mathbb{C}$ and $n \in \mathbb{N}$ with $|\lambda| \le n < |\lambda| + 1$, then
  \begin{equation*}
    \vertiii{\lambda f} \le \vertiii{n f} \le n \vertiii{f} \le (|\lambda|+1) \vertiii{f}
  \end{equation*}
  by the triangle inequality.
  From this, it easily follows that $\npu$ is closed under scalar multiplication, and hence is a  subspace of $N_u$.

 To see density of the polynomials, note that if $f \in \npu$, then for each $r < 1$, the function
  $f_r$ is holomorphic in a neighborhood of $\overline{\mathbb{B}_d}$ and hence a uniform limit of polynomials on $\overline{\mathbb{B}_d}$.
  Thus each $f_r$ and therefore $f$ belongs to the $\rho$-closure of the polynomials.

  Continuity of addition on $\npu$ follows from the triangle inequality.
To see continuity of  multiplication, from which scalar multiplication follows as a special case,
note that
if $f,g,f_0,g_0\in \npu$ then
 $$fg-f_0g_0=(f-f_0)(g-g_0)+f_0(g-g_0)+g_0(f-f_0).$$
Using  the triangle inequality together with the inequality  $\vertiii{FG}\le
\vertiii{F}+\vertiii{G}$,
we obtain
$$\vertiii{fg-f_0g_0}\le \vertiii{f-f_0}+\vertiii{g-g_0} +\vertiii{f_0(g-g_0)}+\vertiii{g_0(f-f_0)}.$$
So it suffices to prove that multiplication is separately continuous.

Since   $\vertiii{f g} \le ( \|f\|_{\infty}+1) \vertiii{g}$, we have
\beq
\vertiii{f (g-g_0)} & \ \le \ &  \vertiii{(f - f_r) (g - g_0)} + \vertiii{f_r (g - g_0)} \\
                    &     \le & \vertiii{f - f_r} + \vertiii{g - g_0} + (\|f_r\|_\infty +1) \vertiii{g - g_0}).
 \eeq
Let $\vare > 0$, and choose $r < 1$ so that $\vertiii{f-f_r} < \frac{\vare}{3}$.
Then if
\[
\vertiii{g - g_0} < \frac{\vare}{3+ 3 \| f_r \|_\infty}
\]
we get $ \vertiii{f(g - g_0)} < \vare $. So multiplication is continuous.

  Finally, we show completeness. Let $(f_n)$ be a Cauchy sequence in $\npu$.
 From \eqref{eqaa6}  we see
  that $(f_n)$ converges uniformly
  on compact subsets of $\mathbb{B}_d$ to a holomorphic function $f$.
  On the other hand, completeness of $N^+(\D)$ (see \cite[Theorem 1]{yan73a} or \cite[p. 919]{sha-shi})  yields that each slice sequence $(f_n \circ i_\zeta)$ converges in $N^+(\D)$
  and in particular pointwise on $\mathbb{D}$, so that the limit necessarily equals $f \circ i_\zeta$.
  Moreover, as $  \vertiii{ f_n \circ i_\zeta - f_m \circ i_\zeta} \le \vertiii{f_n - f_m}$
   it follows that the convergence is uniform in $\zeta \in \partial \mathbb{B}_d$,
  hence $f \in N_u$ and $\lim_{n \to \infty} \rho(f_n,f) = 0$.

To see that $f \in \npu$, let $\varepsilon > 0$ and let $n \in \mathbb{N}$
  with $\vertiii{f - f_n} < \varepsilon$. Then
  \begin{equation*}
    \vertiii{f - f_r} \le \vertiii{f - f_n} + \vertiii{f_n - (f_n)_r} + \vertiii{ (f_n)_r - f_r}
    \le 2 \varepsilon + \vertiii{f_n - (f_n)_r},
  \end{equation*}
  so $\limsup_{r \to 1} \vertiii{f - f_r} \le 2 \varepsilon$. Since $\varepsilon > 0$ was arbitrary,
  $f \in \npu$.
\end{proof}

We shall need the next result in Section \ref{secnec}.
\begin{lem}
  \label{lem:norm_equiv}
  Let $h(z) = \sum_{|\alpha| = n} \widehat{h}(\alpha) z^\alpha$ be a homogeneous polynomial of degree $n$. Then the following inequalities hold.
  \begin{enumerate}[label=\normalfont{(\alph*)}]
    \item $\|h\|_\infty \le \|h\|_{H^2_d} \lesssim (n+1)^{(d-1)/2} \|h\|_\infty$.
    \item $\max_{\alpha} |\widehat{h}(\alpha)| \omega_\alpha \le \|h\|_{H^2_d} \lesssim (n+1)^{(d-1)/2}
      \max_{\alpha} |\widehat{h}(\alpha)| \omega_\alpha$.
  \end{enumerate}
  Here, the implied constants only depend on $d$.
\end{lem}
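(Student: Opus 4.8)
My plan is to reduce both statements to the coefficient formula
\[
  \|h\|_{\htd}^2 = \sum_{|\alpha| = n} |\widehat{h}(\alpha)|^2 \omega_\alpha^2 ,
\]
valid for a homogeneous polynomial $h$ of degree $n$ by orthogonality of the monomials, and to handle the sup norm by testing against ``homogeneous reproducing kernels''. For $\zeta \in \partial \mathbb{B}_d$, set
\[
  g_\zeta(z) := \langle z, \zeta \rangle^n = \sum_{|\alpha| = n} \binom{n}{\alpha} \overline{\zeta^\alpha}\, z^\alpha , \qquad \binom{n}{\alpha} := \frac{n!}{\alpha!},
\]
which is the degree-$n$ homogeneous component of $k(\cdot, \zeta) = (1 - \langle \cdot, \zeta \rangle)^{-1}$. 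Using $\omega_\alpha^2 = \alpha!/n!$ from \eqref{eqA13}, a one-line computation gives $\|g_\zeta\|_{\htd}^2 = \sum_{|\alpha| = n} \binom{n}{\alpha} |\zeta|^{2\alpha} = (|\zeta_1|^2 + \dots + |\zeta_d|^2)^n = 1$, and likewise $\langle h, g_\zeta \rangle_{\htd} = \sum_{|\alpha| = n} \widehat{h}(\alpha) \zeta^\alpha = h(\zeta)$ for any homogeneous $h$ of degree $n$.

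Part (b) then follows by elementary counting. The lower bound is immediate, since each summand $|\widehat{h}(\alpha)|^2 \omega_\alpha^2$ is at most the full sum $\|h\|_{\htd}^2$. For the upper bound, bounding the sum by its number of terms times its largest one,
\[
  \|h\|_{\htd}^2 \le \#\{\alpha \in \mathbb{N}^d : |\alpha| = n\} \cdot \max_\alpha |\widehat{h}(\alpha)|^2 \omega_\alpha^2 = \binom{n+d-1}{d-1} \max_\alpha \bigl( |\widehat{h}(\alpha)| \omega_\alpha \bigr)^2 ,
\]
and $\binom{n+d-1}{d-1} \le (n+d-1)^{d-1}/(d-1)! \lesssim (n+1)^{d-1}$, with constant depending only on $d$.

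For part (a), the lower bound $\|h\|_\infty \le \|h\|_{\htd}$ drops out of the reproducing identity above: for $\zeta \in \partial \mathbb{B}_d$ we get $|h(\zeta)| = |\langle h, g_\zeta \rangle_{\htd}| \le \|h\|_{\htd}\,\|g_\zeta\|_{\htd} = \|h\|_{\htd}$, and since $h$ is a polynomial the maximum modulus principle gives $\|h\|_\infty = \max_{\partial \mathbb{B}_d} |h| \le \|h\|_{\htd}$. For the upper bound I would compare $\|\cdot\|_{\htd}$ with the $L^2$ norm on the sphere: writing $\sigma$ for normalized surface measure on $\partial \mathbb{B}_d$, the standard moment formula $\int_{\partial \mathbb{B}_d} |z^\alpha|^2 \, d\sigma = \frac{(d-1)!\, \alpha!}{(|\alpha| + d - 1)!}$ together with orthogonality of the monomials in $L^2(\sigma)$ gives, for homogeneous $h$ of degree $n$,
\[
  \|h\|_{\htd}^2 = \binom{n+d-1}{d-1} \int_{\partial \mathbb{B}_d} |h|^2 \, d\sigma ,
\]
because the ratio of the two coefficient weights, $\frac{\alpha!/n!}{(d-1)!\,\alpha!/(n+d-1)!}$, equals $\binom{n+d-1}{d-1}$ for every $\alpha$ with $|\alpha| = n$, independently of $\alpha$. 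Bounding $\int_{\partial \mathbb{B}_d} |h|^2 \, d\sigma \le \|h\|_\infty^2$ and using $\binom{n+d-1}{d-1} \lesssim (n+1)^{d-1}$ as before finishes part (a).

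I do not expect a genuine obstacle here; the only delicate point is getting the normalization in the sphere moment formula right so that the exact constant $\binom{n+d-1}{d-1}$ emerges, and noticing that this formula depends on $\alpha$ only through $\alpha!$ — exactly as $\omega_\alpha^2 = \alpha!/n!$ does on each fixed homogeneous level — so that the passage between the two Hilbert-space norms is honestly just multiplication by a scalar. Everything else is multi-index bookkeeping and the maximum modulus principle.
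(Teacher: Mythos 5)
Your proof is correct, and three of the four inequalities are established exactly as in the paper: part (b) via the coefficient formula and counting the $\binom{n+d-1}{d-1}$ monomials of degree $n$, and the upper bound in (a) via Rudin's moment formula $\int_{\partial\mathbb{B}_d}|z^\alpha|^2\,d\sigma = (d-1)!\,\alpha!/(n+d-1)!$, which gives $\|h\|_{H^2_d}^2=\binom{n+d-1}{d-1}\|h\|_{H^2(\partial\mathbb{B}_d)}^2$ precisely as you computed. The one place you genuinely diverge is the lower bound $\|h\|_\infty\le\|h\|_{H^2_d}$ in (a): the paper cites the nontrivial fact that the multiplier norm of a homogeneous polynomial equals its $H^2_d$ norm (so $\|h\|_\infty\le\|h\|_{\Mult(H^2_d)}=\|h\|_{H^2_d}$), whereas you pair $h$ against the unit vector $\langle\cdot,\zeta\rangle^n$ (the degree-$n$ component of the kernel) and apply Cauchy--Schwarz plus the maximum principle. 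Your route is more elementary and self-contained --- it proves only the weaker inequality actually needed, without importing the multiplier-norm identity from the literature --- and your verifications that $\|\langle\cdot,\zeta\rangle^n\|_{H^2_d}=1$ and $\langle h,\langle\cdot,\zeta\rangle^n\rangle_{H^2_d}=h(\zeta)$ are both correct.
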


\begin{proof}
  (a) We have
  \begin{equation*}
    \|h\|_\infty \le \|h\|_{\Mult(H^2_d)} = \|h\|_{H^2_d},
  \end{equation*}
  since $h$ is homogeneous (see e.g.\ \cite[Proposition 6.4]{Hartz17a}). Next,
  if $H^2(\partial \mathbb{B}_d)$, denotes the Hardy space on the ball, then the formula for the norm
  of a monomial in $H^2(\partial \mathbb{B}_d)$ in \cite[Proposition 1.4.9]{rud80} shows that
  \begin{equation*}
    \|h\|_{H^2_d}^2 = \binom{d-1+n}{d-1} \|h\|_{H^2(\partial \mathbb{B}_d)}^2
    \le
    \binom{d-1+n}{d-1} \|h\|_{\infty}^2
    \lesssim (n+1)^{d-1} \|h\|_{\infty}^2.
  \end{equation*}

  (b) Note that
  \begin{equation*}
    \|h\|^2_{H^2_d} = \sum_{|\alpha| = n} |\widehat{h}(\alpha)|^2 \omega_\alpha^2,
  \end{equation*}
  so the inequality on the left is obvious, and the inequality on the right follows
  from the fact that there are $\binom{d-1+n}{d-1} \lesssim (n+1)^{d-1}$ monomials in $d$ variables of degree $n$.
\end{proof}

In particular, the final condition in Theorem \ref{thmH2} admits the following equivalent reformulations.
\begin{cor}
  \label{cor:decay_equiv}
  Let $h = \sum_{\alpha} \widehat{h}(\alpha) z^\alpha \in \mathcal{O}(\mathbb{B}_d)$ and let $h_n = \sum_{|\alpha| = n} \widehat{h}(\alpha) z^\alpha$
  be the homogeneous component of degree $n$. Then the following assertions are equivalent:
  \begin{enumerate}[label=\normalfont{(\roman*)}]
    \item There exist $M,c > 0$ so that
      \begin{equation*}
        |\widehat{h}(\alpha)| \omega_\alpha \le M e^{-c \sqrt{|\alpha|}} \quad \text{ for all } \alpha \in \mathbb{N}^d;
      \end{equation*}
    \item there exist $M,c > 0$ so that
      \begin{equation*}
        \|h_n\|_\infty \le M e^{-c \sqrt{n}} \quad \text{ for all } \alpha \in \mathbb{N}^d;
      \end{equation*}
    \item there exist $M,c > 0$ so that
      \begin{equation*}
        \|h_n\|_{H^2_d} \le M e^{-c \sqrt{n}} \quad \text{ for all } \alpha \in \mathbb{N}^d.
      \end{equation*}
  \end{enumerate}
\end{cor}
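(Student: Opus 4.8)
The plan is to reduce everything to Lemma~\ref{lem:norm_equiv} applied to the homogeneous component $h_n$, combined with the elementary observation that polynomial factors in $n$ are absorbed by an arbitrarily small decrease of the exponential rate: for every $c>0$, $s\ge 0$ and $0<c'<c$ there is a constant $C=C(c,c',s)$ with $(n+1)^s e^{-c\sqrt n}\le C e^{-c'\sqrt n}$ for all $n\in\mathbb N$, since $(n+1)^s e^{-(c-c')\sqrt n}$ is bounded. Thus whenever a bound of the form $M e^{-c\sqrt n}$ is multiplied by $(n+1)^{(d-1)/2}$ it may be replaced by a bound $M' e^{-c'\sqrt n}$ of the same shape.

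First I would record that condition (i) is equivalent to the existence of $M,c>0$ with
\[
\mu_n:=\max_{|\alpha|=n}|\widehat h(\alpha)|\,\omega_\alpha\ \le\ M e^{-c\sqrt n}\qquad\text{for all }n\in\mathbb N,
\]
because for $|\alpha|=n$ one has $\sqrt{|\alpha|}=\sqrt n$, so taking the maximum over such $\alpha$ turns the pointwise bound in (i) into the displayed bound and conversely.

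Next, (i) $\Leftrightarrow$ (iii): by Lemma~\ref{lem:norm_equiv}(b) one has $\mu_n\le \|h_n\|_{H^2_d}\lesssim (n+1)^{(d-1)/2}\mu_n$. If (iii) holds then $\mu_n\le\|h_n\|_{H^2_d}\le M e^{-c\sqrt n}$, which is the reformulation of (i) above. Conversely, if $\mu_n\le M e^{-c\sqrt n}$ then $\|h_n\|_{H^2_d}\lesssim (n+1)^{(d-1)/2} M e^{-c\sqrt n}\le M' e^{-c'\sqrt n}$ by the observation above, giving (iii). The equivalence (ii) $\Leftrightarrow$ (iii) is proved in exactly the same way, now using Lemma~\ref{lem:norm_equiv}(a), namely $\|h_n\|_\infty\le\|h_n\|_{H^2_d}\lesssim (n+1)^{(d-1)/2}\|h_n\|_\infty$, and absorbing the polynomial factor.

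The argument is entirely routine, and there is no real obstacle: the only point needing a moment of care is that each passage through the factor $(n+1)^{(d-1)/2}$ of Lemma~\ref{lem:norm_equiv} forces the (harmless) replacement of $c$ by some smaller $c'$, which is exactly what the elementary inequality above takes care of.
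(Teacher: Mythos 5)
Your argument is correct and is exactly the one the paper intends: the printed proof is just ``Clear from Lemma \ref{lem:norm_equiv},'' and you have filled in the only detail being suppressed, namely that the polynomial factor $(n+1)^{(d-1)/2}$ is absorbed by passing from $c$ to a slightly smaller $c'$. Nothing further is needed.
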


\begin{proof}
  Clear from Lemma \ref{lem:norm_equiv}.
\end{proof}

Yanagihara \cite{yan74} showed that the Taylor coefficients of a function $f \in N^+(\mathbb{D})$ satisfy
the growth estimate $|\widehat{f}(n)| = O(\exp(c \sqrt{n}))$ for all $c > 0$.
We will use the following generalization to $N_u^+$ of this fact in Section \ref{secc}.
\begin{lem}
  \label{lem:Smirnov_growth}
  Let $f \in \npu$ have homogeneous decomposition $f = \sum_{n=0}^\infty f_n$.
  Then for all $c > 0$, there exists $M \ge 0$ so that $\|f_n\|_{H^2_d} \le M e^{c \sqrt{n}}$.
\end{lem}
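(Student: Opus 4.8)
The plan is to reduce to the one–dimensional Smirnov class by slicing, and then to combine the coefficient estimate of Lemma~\ref{lemb3} with the defining property of $\npu$, namely that $\rho(f_s,f)\to 0$ as $s\uparrow 1$. The reason slicing alone does not suffice is that the plain Nevanlinna coefficient bound for a slice only yields a fixed exponential rate $e^{C\sqrt n}$ with $C\sim\sqrt{\vertiii f}$; shrinking the rate below an arbitrary prescribed $c$ is exactly what the Smirnov approximation buys.

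Fix $c>0$. For $\zeta\in\partial\B_d$ I would set $g_\zeta:=f\circ i_\zeta$. As recorded just before Proposition~\ref{lem:F-space}, $g_\zeta\in N^+(\D)$ with $\vertiii{g_\zeta}\le\vertiii f$; and since $f_n$ is homogeneous of degree $n$, the Taylor expansion of the slice is $g_\zeta(z)=\sum_{n\ge 0}f_n(\zeta)z^n$, so $\widehat{g_\zeta}(n)=f_n(\zeta)$. Slicing commutes with dilation, $(f_s)\circ i_\zeta=(g_\zeta)_s$, so $g_\zeta-(g_\zeta)_s=(f-f_s)\circ i_\zeta$ and hence $\vertiii{g_\zeta-(g_\zeta)_s}\le\vertiii{f-f_s}$. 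Using $\rho(f_s,f)\to 0$, I would first fix $s<1$, depending only on $c$ and $f$, with $\sqrt{8\,\vertiii{f-f_s}}<c/4$, and then apply \eqref{eqaa4} to $g_\zeta-(g_\zeta)_s\in N^+(\D)$, whose $n$-th Taylor coefficient is $(1-s^n)f_n(\zeta)$:
\[
(1-s^n)\,|f_n(\zeta)|\ \le\ e^{\sqrt{8n\,\vertiii{g_\zeta-(g_\zeta)_s}}\,(1+\varepsilon_n)}\ \le\ e^{\sqrt{8n\,\vertiii{f-f_s}}\,(1+\varepsilon_n)}.
\]
Here $\varepsilon_n$ depends only on $\vertiii{g_\zeta-(g_\zeta)_s}$ and is monotone in it, so $\varepsilon_n\le\varepsilon_n(\vertiii{f-f_s})$, the latter being $o(1)$ in $n$ and independent of $\zeta$. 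Hence there is $n_0=n_0(c,f)$ such that for $n\ge n_0$ both $\varepsilon_n\le 1$ and $1-s^n\ge\tfrac12$, giving $|f_n(\zeta)|\le 2e^{2\sqrt{8n\,\vertiii{f-f_s}}}\le 2e^{(c/2)\sqrt n}$; taking the supremum over $\zeta\in\partial\B_d$ gives $\|f_n\|_\infty\le 2e^{(c/2)\sqrt n}$. Finally Lemma~\ref{lem:norm_equiv}(a) yields $\|f_n\|_{H^2_d}\lesssim (n+1)^{(d-1)/2}\|f_n\|_\infty\lesssim (n+1)^{(d-1)/2}e^{(c/2)\sqrt n}$ for $n\ge n_0$, and since $(n+1)^{(d-1)/2}=O(e^{(c/2)\sqrt n})$ this is $O(e^{c\sqrt n})$; absorbing the finitely many homogeneous polynomials $f_0,\dots,f_{n_0-1}$ into the constant completes the argument.

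The point that needs care — and the only genuinely delicate one — is uniformity over the slices: a priori the error term $\varepsilon_n$ and the cutoff $n_0$ coming out of Lemma~\ref{lemb3} depend on the function, hence potentially on $\zeta$. This is exactly why one first uniformizes the slice norms via $\vertiii{g_\zeta-(g_\zeta)_s}\le\vertiii{f-f_s}$ and then invokes the monotonicity of $\varepsilon_n$ in its argument, so that a single $\zeta$-free choice of $s$ and $n_0$ works simultaneously for all $\zeta$. Everything else is bookkeeping.
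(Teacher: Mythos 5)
Your proof is correct and follows essentially the same route as the paper: slice along $\zeta$, use the uniform bound $\vertiii{g_\zeta}\le\vertiii{f}$ together with the coefficient estimate \eqref{eqaa4} applied to a function of small $\vertiii{\cdot}$-norm, and then convert the resulting $\|f_n\|_\infty$ bound into an $H^2_d$ bound via Lemma~\ref{lem:norm_equiv}. The only difference is cosmetic: the paper makes the triple norm small by passing to $\varepsilon f$ (using continuity of scalar multiplication from Proposition~\ref{lem:F-space}) and divides out $\varepsilon$, whereas you pass to $f-f_s$ directly from the definition of $\npu$ and divide out the harmless factor $1-s^n$; both devices rest on the same Smirnov approximation property, and your handling of the uniformity of $\varepsilon_n$ and $n_0$ over $\zeta$ is exactly the point that needs care.
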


\begin{proof}
  By \eqref{eqaa4}, for each $c > 0$  there exists $\delta > 0$ so that if  $\vertiii{g} < \delta$ then
    \begin{equation*}
    | \widehat{g}(n)| \le e^{c \sqrt{n}} .
  \end{equation*}

  Since $\lim_{\varepsilon \to 0} \vertiii{\varepsilon f} = 0$,
  there exists $\varepsilon > 0$ so that $\vertiii{\varepsilon f} < \delta$, hence if
  $f_\zeta(z)  = f(z \zeta)$ denotes the slice function, then $\vertiii{\varepsilon f_\zeta} < \delta$
  for all $\zeta \in \partial \mathbb{B}_d$. Consequently,
  \begin{equation*}
    \varepsilon |f_n(\zeta)| = | \widehat{ \varepsilon f_\zeta}(n)| \le e^{c \sqrt{n}}.
  \end{equation*}
  This shows that $\|f_n\|_\infty = O(e^{c \sqrt{n}})$ for all $c > 0$. The claim now follows from Corollary \ref{cor:decay_equiv}.
\end{proof}

\section{\texorpdfstring{$N^+(H^2_d)$ is contained  in $\npu$}{Containment of Smirnov classes}}
\label{secB}
Our goal is to show that $N^+(H^2_d) \subset \npu$. We begin with two lemmata.

\begin{lem}
  \label{lem:DA_Smirnov_inclusion}
  $H^2_d \subset \npu$ and
  $\vertiii{f} \le \|f\|_{H^2_d}$ for all $f \in H^2_d$.
\end{lem}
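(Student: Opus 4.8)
The plan is to estimate $\vertiii{f}$ for $f \in H^2_d$ directly from the definition \eqref{eqa41}, using the slice functions $f \circ i_\zeta$. For a fixed $\zeta \in \partial \mathbb{B}_d$, the slice function $g = f \circ i_\zeta$ is a holomorphic function on $\mathbb{D}$, and the key point is that its Hardy-space norm on the disk is controlled by the Drury--Arveson norm of $f$, uniformly in $\zeta$. Indeed, writing $f = \sum_{n=0}^\infty f_n$ in homogeneous components, we have $g(z) = \sum_{n=0}^\infty f_n(\zeta) z^n$, so $\|g\|_{H^2(\mathbb{D})}^2 = \sum_{n=0}^\infty |f_n(\zeta)|^2$. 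Since each $f_n$ is a homogeneous polynomial, Lemma \ref{lem:norm_equiv}(a) (or directly the estimate $\|f_n\|_\infty \le \|f_n\|_{H^2_d}$) gives $|f_n(\zeta)| \le \|f_n\|_{H^2_d}$, hence
\begin{equation*}
  \|f \circ i_\zeta\|_{H^2(\mathbb{D})}^2 \le \sum_{n=0}^\infty \|f_n\|_{H^2_d}^2 = \|f\|_{H^2_d}^2
\end{equation*}
for every $\zeta \in \partial \mathbb{B}_d$.

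Next I would convert this into a bound on $\vertiii{\cdot}$. By the elementary inequality $\log(1+t) \le t$ and the Cauchy--Schwarz inequality applied to the probability measure $\frac{d\theta}{2\pi}$, for any $g \in H^2(\mathbb{D})$ and any $0 < r < 1$,
\begin{equation*}
  \int \log\bigl(1 + |g(re^{i\theta})|\bigr)\,\frac{d\theta}{2\pi}
  \le \int |g(re^{i\theta})|\,\frac{d\theta}{2\pi}
  \le \left( \int |g(re^{i\theta})|^2\,\frac{d\theta}{2\pi} \right)^{1/2}
  \le \|g\|_{H^2(\mathbb{D})}.
\end{equation*}
Taking the supremum over $r$ and over $\zeta$, and combining with the previous paragraph, yields $\vertiii{f} \le \|f\|_{H^2_d}$, which in particular shows $\vertiii{f} < \infty$, so $f \in N_u$.

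It then remains to check that $f$ actually lies in $\npu$, i.e. that $\lim_{r \uparrow 1}\rho(f_r, f) = 0$. For this, apply the inequality just proved to $f - f_s$ in place of $f$ (for $s < 1$): $\rho(f, f_s) = \vertiii{f - f_s} \le \|f - f_s\|_{H^2_d}$. Since $f \in H^2_d$ and dilation converges to the identity in the $H^2_d$ norm (the dilated functions $f_s$ converge to $f$ in $H^2_d$, as is standard for reproducing kernel Hilbert spaces of analytic functions with the $f_s \to f$ property, or simply because $\|f - f_s\|_{H^2_d}^2 = \sum_n (1 - s^n)^2 \|f_n\|^2_{H^2_d} \to 0$ by dominated convergence), we get $\rho(f_s, f) \to 0$ as $s \uparrow 1$. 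Hence $f \in \npu$. I do not anticipate a serious obstacle here; the only mild care needed is to make sure the estimate $|f_n(\zeta)| \le \|f_n\|_{H^2_d}$ is invoked correctly (it is exactly the first inequality in Lemma \ref{lem:norm_equiv}(a)) and that the slice-function Taylor coefficients are identified correctly as $f_n(\zeta)$.
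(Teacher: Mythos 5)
Your proof is correct and follows essentially the same route as the paper: bound the slice integrals via $\log(1+t)\le t$ and the inequality $\|f\circ i_\zeta\|_{H^2}\le \|f\|_{H^2_d}$, then deduce $f\in N_u^+$ from $\rho(f,f_s)\le \|f-f_s\|_{H^2_d}\to 0$. The only difference is that you spell out the slice estimate via homogeneous components and Lemma~\ref{lem:norm_equiv}(a), where the paper simply asserts it; this is a harmless (and correct) elaboration.
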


\begin{proof}
  The inequality $\log(1+x) \le x$ shows that for all $\zeta \in \partial \mathbb{B}_d$, we have
  \begin{equation*}
    \sup_{0 < r < 1} \int_{0}^{2 \pi} \log(1 + |f(r e^{i \theta} \zeta)|) \frac{d \theta}{2 \pi}
    \le \|f \circ i_\zeta\|_{H^1} \le \|f \circ i_\zeta\|_{H^2} \le \|f\|_{H^2_d}.
  \end{equation*}
  Hence $H^2_d \subset N_u$, and the inequality in the statement holds.
  Moreover, $\rho(f_r,f) \le \|f - f_r\|_{H^2_d} \to 0$ as $r \to 1$, so $f \in \npu$.
\end{proof}

\begin{lem}
  \label{lem:vNI}
  Let $\mathcal{H}$ be a reproducing kernel Hilbert space and let $\psi \in \Mult(\mathcal{H})$ with $\|\psi\|_{\Mult(\mathcal{H})} \le 1$.
  Then for all $r \in (0,1)$, we have
  \begin{equation*}
    \Big \| \frac{1 - \psi}{ 1- r \psi} \Big\|_{\Mult(\mathcal{H})} \le \frac{2}{1 + r}.
  \end{equation*}
\end{lem}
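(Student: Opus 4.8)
The plan is to exploit that $\|\psi\|_{\Mult(\mathcal{H})} \le 1$ means, by the characterization of contractive multipliers via the Szegő/Pick kernel (equivalently, by the von Neumann inequality: a contractive multiplier of a complete Pick space, or more elementarily any reproducing kernel Hilbert space, applied to a rational function bounded by the appropriate quantity), the norm of $\phi(\psi)$ as a multiplier is controlled by the supremum of $|\phi|$ over the closed unit disk whenever $\phi$ is analytic on a neighborhood of $\overline{\mathbb{D}}$. Concretely, set
\[
\phi_r(w) = \frac{1 - w}{1 - r w}, \qquad w \in \overline{\mathbb{D}}.
\]
Then $\phi_r$ is a M\"obius-type function, analytic on a neighborhood of $\overline{\mathbb{D}}$ since $1/r > 1$, and $\frac{1-\psi}{1-r\psi} = \phi_r(\psi)$ makes sense as a multiplier (the denominator $1 - r\psi$ is invertible in $\Mult(\mathcal{H})$ because $\|r\psi\| \le r < 1$, so the Neumann series converges). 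The inequality I want is therefore
\[
\|\phi_r(\psi)\|_{\Mult(\mathcal{H})} \le \sup_{|w| \le 1} |\phi_r(w)|,
\]
and the remaining task is the elementary estimate $\sup_{|w|\le 1}|\phi_r(w)| \le \frac{2}{1+r}$.

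For the operator-theoretic step I would argue as follows. Let $M_\psi$ denote the multiplication operator on $\mathcal{H}$; it is a contraction. For any polynomial $p$, $\|M_{p(\psi)}\|_{\Mult(\mathcal{H})} = \|p(M_\psi)\|$, and since $M_\psi$ is a contraction on a Hilbert space, von Neumann's inequality gives $\|p(M_\psi)\| \le \sup_{|w|\le 1}|p(w)|$. Now $\phi_r$ is a uniform limit on $\overline{\mathbb{D}}$ of its Taylor polynomials $p_N$ (the Taylor series of $\phi_r$ converges uniformly on $\overline{\mathbb{D}}$ because the radius of convergence is $1/r > 1$). The partial sums $p_N(\psi)$ then converge to $\phi_r(\psi)$ in multiplier norm — indeed in $\Mult(\mathcal{H})$ norm, since $\|p_N(\psi) - \phi_r(\psi)\|_{\Mult} = \|p_N(M_\psi) - \phi_r(M_\psi)\| \le \sup_{|w|\le 1}|p_N(w) - \phi_r(w)| \to 0$ — so passing to the limit in von Neumann's inequality yields $\|\phi_r(\psi)\|_{\Mult(\mathcal{H})} \le \sup_{|w|\le 1}|\phi_r(w)|$.

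For the scalar estimate: a M\"obius transformation maps $\partial \mathbb{D}$ to a circle, so $\phi_r$ attains its maximum modulus on $\overline{\mathbb{D}}$ on the boundary; parametrizing $w = e^{i\theta}$ and computing, the maximum of $\left|\frac{1-e^{i\theta}}{1-re^{i\theta}}\right|$ is attained at $\theta = \pi$, i.e. $w = -1$, giving $\frac{|1-(-1)|}{|1-r(-1)|} = \frac{2}{1+r}$. (One can also see this by noting $\phi_r$ maps $\mathbb{D}$ onto a disk; its image is the disk with diameter the segment from $\phi_r(-1) = \frac{2}{1+r}$ to $\phi_r(1) = 0$, hence $|\phi_r| \le \frac{2}{1+r}$ on $\overline{\mathbb{D}}$.)

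The main obstacle — really the only subtle point — is making sure the limiting argument is clean: that $1 - r\psi$ is genuinely invertible as a multiplier and that the Taylor polynomials of $\phi_r$ in $\psi$ converge to $\frac{1-\psi}{1-r\psi}$ in the right topology. Both follow from $r < 1$ (so $\|r M_\psi\| < 1$, the Neumann series for $(1-rM_\psi)^{-1}$ converges in operator norm, and the composition is the operator attached to the multiplier $\phi_r(\psi)$), but it is worth stating explicitly that $\phi_r(\psi) \in \Mult(\mathcal{H})$ with $M_{\phi_r(\psi)} = \phi_r(M_\psi)$ before invoking von Neumann. Everything else is routine.
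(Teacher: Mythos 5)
Your proposal is correct and follows essentially the same route as the paper: apply von Neumann's inequality to the contraction $M_\psi$ to bound $\|\phi_r(\psi)\|_{\Mult(\mathcal{H})}$ by $\sup_{|w|\le 1}|\phi_r(w)|$, then check that this supremum is attained at $w=-1$ and equals $\frac{2}{1+r}$. The paper simply invokes von Neumann's inequality for the rational function directly, while you spell out the (correct, standard) approximation by Taylor polynomials and the invertibility of $1-r\psi$; that is additional detail, not a different argument.
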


\begin{proof}
  By von Neumann's inequality,
  \begin{equation*}
    \Big \| \frac{1 - \psi}{ 1- r \psi} \Big\|_{\Mult(\mathcal{H})} \le \sup_{|z| = 1} \Big| \frac{1 - z}{1 -r z} \Big|.
  \end{equation*}
  Straightforward calculus shows that the supremum on the right is attained for $z=-1$ and hence
  equal to $\frac{2}{1 + r}$.
\end{proof}

A normalized complete Pick space can be defined to be a Hilbert function space on a set $X$ with a reproducing kernel
of the form
\[
k(x,y) \= \frac{1}{1- \la b(x), b(y) \ra},
\]
where $b$ is some function from $X$ into the unit ball of a Hilbert space such that $b(x_0) = 0$
for some base point $x_0 \in X$.
See \cite{agmc_cnp,ampi} for how this is equivalent to having the complete Pick property.
The multiplier algebra $\Mult(\mathcal{H})$ can be identified with a WOT-closed subalgebra of $B(\mathcal{H})$.
In this way, $\Mult(\mathcal{H})$ becomes a dual space. On bounded subsets of $\Mult(\mathcal{H})$, the resulting
weak-$*$ topology agrees with the topology of pointwise convergence on $X$.
Indeed, this follows from density of the linear span of all kernel functions in $\mathcal{H}$.

It follows from a result of Davidson, Ramsay and Shalit, see \cite[Corollary 2.7]{drs15}, that there is a 1-to-1 correspondence
between multiplier invariant subspaces of a complete Pick space
and weak-$*$ closed ideals of the multiplier algebra.
In particular, if $m$ is a cyclic multiplier, then $m \Mult(\mathcal{H})$ is weak-$*$ dense
in $\Mult(\mathcal{H})$. We require the following Kaplansky density type refinement of this fact.
If $f \in \mathcal{H}$, we shall let $[f]$ denote the closure in $\mathcal{H}$ of $\{ mf : m \in  \Mult(\mathcal{H})\}$.

\begin{lem}
  \label{lem:cyclic_Kaplansky}
  Let $\mathcal{H}$ be a normalized complete Pick space and let $m \in \Mult(\mathcal{H})$.
  If $\varphi \in \Mult(\mathcal{H}) \cap [m]$ with $\|\varphi\|_{\Mult(\mathcal{H})} \le 1$,
  then there exists a sequence $(\varphi_n)$ in $\Mult(\mathcal{H})$ so that
  \begin{enumerate}
    \item $(\varphi_n m)$ converges to $\varphi$ in the weak-$*$ topology, and
    \item $\|\varphi_n m\|_{\Mult(\mathcal{H})} \le 1$ for all $n \in \mathbb{N}$.
  \end{enumerate}
\end{lem}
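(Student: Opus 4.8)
The plan is to reduce the statement to a pointwise interpolation problem on finite subsets of $X$, and then to solve that problem using the complete Pick structure of $\mathcal H$ together with Lemma~\ref{lem:vNI}.

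First, after rescaling $m$ we may assume $\|m\|_{\Mult(\mathcal H)} \le 1$. As noted above, on norm-bounded subsets of $\Mult(\mathcal H)$ the weak-$*$ topology coincides with pointwise convergence on $X$, and the multipliers we shall produce will have norm $\le 1$; hence it suffices to prove that for every finite set $F \subseteq X$ and every $\varepsilon > 0$ there is $\psi \in \Mult(\mathcal H)$ with $\|\psi m\|_{\Mult(\mathcal H)} \le 1$ and $|\psi(x) m(x) - \varphi(x)| < \varepsilon$ for all $x \in F$. Granting this, the closed unit ball of $\Mult(\mathcal H)$ is weak-$*$ compact, and a standard argument (exhausting $X$ by an increasing sequence of finite sets and letting $\varepsilon = 1/n$) then produces the desired sequence $(\varphi_n)$.

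Next, since $[m]$ is by definition the closure of $m \Mult(\mathcal H)$ in $\mathcal H$ and point evaluations are bounded on $\mathcal H$, we may choose $g \in \Mult(\mathcal H)$ with $|\varphi(x) - g(x) m(x)| < \varepsilon/2$ for all $x \in F$. Thus $h := g m$ lies in $m \Mult(\mathcal H)$ and already interpolates $\varphi$ on $F$ to within $\varepsilon/2$; the sole defect is that $\|h\|_{\Mult(\mathcal H)}$ is not controlled. The task is therefore to ``round $h$ into the unit ball'': to replace $h$ by some $\psi m \in m \Mult(\mathcal H)$ of multiplier norm $\le 1$ whose values on $F$ differ from those of $h$ by less than $\varepsilon/2$.

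This rounding is the heart of the lemma and the step I expect to be the main obstacle, and it is here that the complete Pick hypothesis is used. By the Davidson--Ramsay--Shalit correspondence, $[m] \cap \Mult(\mathcal H)$ is precisely the weak-$*$-closed ideal generated by $m$, so $\varphi$ already lies in the weak-$*$ closure of $m \Mult(\mathcal H)$; what has to be shown is the Kaplansky-type refinement that $\varphi$ lies in the weak-$*$ closure of the \emph{unit ball} of $m \Mult(\mathcal H)$. To produce the required contractive elements of $m \Mult(\mathcal H)$ one feeds $h$ (equivalently $g m$) through the contractive functional-calculus expressions $\frac{1 - \theta}{1 - r \theta}$ supplied by Lemma~\ref{lem:vNI}, whose multiplier norm is at most $\frac{2}{1+r} \to 1$ as $r \uparrow 1$, and uses the Nevanlinna--Pick property of $\mathcal H$ together with the fact that products of contractive multipliers are contractive to absorb the residual factor. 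The delicate point is exactly to drive the multiplier norm down to $1$ rather than merely to $1 + \varepsilon$, which is the usual stumbling block in Kaplansky-type arguments. Carrying this out yields, for each $F$ and $\varepsilon$, a multiplier $\psi$ with the properties demanded in the first paragraph, which completes the proof.
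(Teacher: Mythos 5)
Your reduction to pointwise approximation is fine as far as it goes: on bounded subsets of $\Mult(\mathcal{H})$ the weak-$*$ topology is pointwise convergence, so it does suffice to produce contractive elements of $m \Mult(\mathcal{H})$ converging to $\varphi$ pointwise. But the step you yourself flag as ``the heart of the lemma'' --- rounding an unbounded approximant $q m$ into the unit ball while preserving its values --- is exactly the content of the lemma, and your proposal does not actually carry it out. Saying that one ``feeds $h$ through the expressions $\frac{1-\theta}{1-r\theta}$ \ldots and uses the Nevanlinna--Pick property \ldots to absorb the residual factor'' leaves unanswered the two questions on which everything turns: where does the auxiliary multiplier $\theta$ come from, and why does multiplying $q m$ by $\frac{1-\theta}{1-r\theta}$ control the \emph{multiplier} norm when all you know a priori is that $q m - \varphi$ is small in the \emph{Hilbert space} norm of $\mathcal{H}$? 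Note also that recasting the problem as interpolation on a finite set $F$ does not help: the constraint is $\|\psi m\|_{\Mult(\mathcal{H})} \le 1$ with $\psi m$ ranging over the ideal $m\Mult(\mathcal{H})$, which is not a standard Pick interpolation problem, so the complete Pick property gives you no direct solvability criterion there.

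The missing ingredient is the factorization theorem for complete Pick spaces (\cite[Theorem 1.1]{ahmrfac}). The paper chooses $q_n$ with $\sum_n \|n^2(q_n m - \varphi)\|_{\mathcal{H}}^2 \le 1$ and factors $n^2(q_n m - \varphi) = \psi_n/(1-\psi)$ with $(\psi_n)$ a contractive column multiplier and $\|\psi\|_{\Mult(\mathcal{H})} \le 1$; this is precisely the device that converts $\mathcal{H}$-norm smallness of $q_n m - \varphi$ into the multiplier-norm bound $\|(1-\psi)(q_n m - \varphi)\|_{\Mult(\mathcal{H})} \le n^{-2}$. Only then does the correction factor $\frac{1-\psi}{1-r_n\psi}$, estimated via Lemma~\ref{lem:vNI}, drive the norm of $\varphi_n m = \frac{1-\psi}{1-r_n\psi}\, q_n m$ down to $1 + o(1)$ (and a final rescaling by $t_n \uparrow 1$ gets it to exactly $1$), while pointwise convergence is preserved because $\frac{1-\psi}{1-r_n\psi} \to 1$ pointwise. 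Without identifying this factorization step, the proposal has a genuine gap at its central point.
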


\begin{proof}
  Since $\varphi \in [m]$, there exists a sequence $(q_n)$ in $\Mult(\mathcal{H})$
  such that
  \be
\label{eqB1}
    \sum_{n=1}^\infty \| n^2 (q_n m -\varphi)\|^2_{\mathcal{H}} \le 1.
\ee
  In this setting, \cite[Theorem 1.1]{ahmrfac} yields a sequence $(\psi_n)$ in $\Mult(\mathcal{H})$
  forming a contractive column multiplier and a multiplier
  $\psi \in \Mult(\mathcal{H})$ with $\|\psi\|_{\Mult(\mathcal{H})} \le 1$ and $\psi \neq 1$ so that
  \begin{equation*}
    n^2 (q_n m - \varphi) = \frac{\psi_n}{1 - \psi} \quad \text{ for all } n \ge 1.
  \end{equation*}
  In particular, $\|\psi_n\|_{\Mult(\mathcal{H})} \le 1$ and thus
  \begin{equation}
    \label{eqn:mult_norm_small}
    \| (1 - \psi) (q_n m - \varphi)\|_{\Mult(\mathcal{H})} \le \frac{1}{n^2}.
  \end{equation}
  Let $r_n = 1 - \frac{1}{n}$ and set
  \begin{equation*}
    \varphi_n = \frac{1 - \psi}{1 - r_n \psi} q_n \in \Mult(\mathcal{H}).
  \end{equation*}
  Then
  \begin{align*}
    \|\varphi_n m\|_{\Mult(\mathcal{H})}
    &= \Big\| \frac{1 - \psi}{1 - r_n \psi} q_n m \Big\|_{\Mult(\mathcal{H})} \\
    &\le
    \Big\| \frac{1 - \psi}{1 - r_n \psi} (q_n m - \varphi) \Big\|_{\Mult(\mathcal{H})}
    + \Big\| \frac{(1 - \psi)\varphi}{1 - r_n \psi} \Big\|_{\Mult(\mathcal{H})}.
  \end{align*}
  We estimate the first summand using \eqref{eqn:mult_norm_small} and the second summand using
  Lemma \ref{lem:vNI} to find that
  \begin{equation*}
    \|\varphi_n m\|_{\Mult(\mathcal{H})}
    \le \frac{1}{n^2 (1 - r_n)} + \frac{2}{1 + r_n}
    = \frac{1}{n} + \frac{2}{2 - \frac{1}{n}},
  \end{equation*}
  hence $\limsup_{n \to \infty} \|\varphi_n m\|_{\Mult(\mathcal{H})} \le 1$.

  Moreover, since $(q_n m)$ tends to $\varphi$ pointwise, we see that $(\varphi_n m)$ tends to $\varphi$ pointwise
  and hence in the weak-$*$ topology. Replacing $\varphi_n$ with $t_n \varphi_n$ for a suitable
  sequence of scalars $(t_n)$ in $(0,1)$ converging to $1$, we obtain the desired sequence.
\end{proof}
\begin{remark} If $J$ is any weak-$*$ closed ideal in $\Mult(\mathcal{H})$ and $\varphi \in \Mult(\mathcal{H})$ is in the closure of $J$ in $\mathcal{H}$, then a very similar argument shows
that $\varphi$ is in $J$ (just replace $q_nm$ in \eqref{eqB1} by $m_n$ for some sequence $m_n$ in $J$). This yields another proof of one direction of  the Davidson-Ramsey-Shalit result.
For the other direction, we need to show
 that if we start with an invariant subspace $\M$, intersect
it with $\Mult(\mathcal{H})$ to get an ideal $J$, and then take the closure of $J$ in $\mathcal{H}$, we get
$\M$ back. This can also be proved using the representation $f = \frac{\varphi}{1 -\psi}$ for functions in $\mathcal{H}$;
see \cite[Cor. 4.1]{ahmrfac}.
\end{remark}

We are now able to prove that $N^+(H^2_d)$ is contained in $N_u^+$.

\begin{theorem}
  \label{thmH1}
  Let $m \in \Mult(H^2_d)$ by cyclic. Then $\frac{1}{m} H^2_d \subset \npu$, and the inclusion
  is continuous. Hence $N^+(H^2_d) \subset \npu$.
\end{theorem}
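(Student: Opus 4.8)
The entire statement reduces to one point: \emph{if $m$ is cyclic, then $1/m\in\npu$.} Granting this, $1/m$ is a fixed element of the topological algebra $(\npu,\rho)$, so for every $g\in H^2_d\subset\npu$ (Lemma~\ref{lem:DA_Smirnov_inclusion}) the product $g/m=g\cdot\frac1m$ again lies in $\npu$; hence $\frac1m H^2_d\subset\npu$, and therefore $N^+(H^2_d)=\bigcup_{m\text{ cyclic}}\frac1m H^2_d\subset\npu$. Continuity of the inclusion $\frac1m H^2_d\hookrightarrow\npu$ then follows by factoring it as
\[
\tfrac1m H^2_d\ \xrightarrow{\ h\mapsto mh\ }\ H^2_d\ \hookrightarrow\ \npu\ \xrightarrow{\ \cdot\,\frac1m\ }\ \npu ,
\]
whose composite is the identity on $\frac1m H^2_d$: the first arrow is a surjective isometry, the second is continuous by Lemma~\ref{lem:DA_Smirnov_inclusion}, and the third, being multiplication by the fixed element $\frac1m\in\npu$, is continuous by Proposition~\ref{lem:F-space}. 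So everything comes down to proving $1/m\in\npu$, and the rest of the plan concerns this.

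\emph{Step 1: $1/m\in N_u$.} Since $m$ is cyclic it has no zero in $\mathbb{B}_d$, so for each $\zeta\in\partial\mathbb{B}_d$ the slice $m\circ i_\zeta$ is a zero-free $H^\infty(\mathbb{D})$-function; thus $\log|m\circ i_\zeta|$ is harmonic on $\mathbb{D}$ and, by the mean-value property, $\int_0^{2\pi}\log|m(\rho e^{i\theta}\zeta)|\,\frac{d\theta}{2\pi}=\log|m(0)|$ for every $\rho\in(0,1)$. As $\log^+|m\circ i_\zeta|\le\log^+\|m\|_{\Mult(H^2_d)}$ pointwise, subtracting yields $\int_0^{2\pi}\log^-|m(\rho e^{i\theta}\zeta)|\,\frac{d\theta}{2\pi}\le\log^+\|m\|_{\Mult(H^2_d)}-\log|m(0)|$, uniformly in $\zeta$ and $\rho$; with $\log(1+t)\le\log 2+\log^+t$ this gives $\vertiii{1/m}\le\log 2+\log^+\|m\|_{\Mult(H^2_d)}-\log|m(0)|<\infty$.

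\emph{Step 2: radial approximation.} It remains to show $\lim_{r\to1}\vertiii{(1/m)-(1/m)_r}=0$. First, each slice $(1/m)\circ i_\zeta=1/(m\circ i_\zeta)$ lies in $N^+(\mathbb{D})$: the slice-restriction map $H^2_d\to H^2(\mathbb{D})$, $f\mapsto f\circ i_\zeta$, is a contraction (by the multinomial identity $\sum_{|\alpha|=n}|\zeta^\alpha|^2\omega_\alpha^{-2}=1$) intertwining $T_m$ with $T_{m\circ i_\zeta}$, so cyclicity of $m$ forces $m\circ i_\zeta$ to be cyclic in $H^2(\mathbb{D})$, i.e.\ outer, and $1/(m\circ i_\zeta)$ is then a quotient of $1\in H^\infty$ by an outer function. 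By \cite{ahmrSm,ahmrfac} we may write $1/m=\phi\cdot\frac1{1-\psi}$ with $\phi\in\Mult(H^2_d)$ and $\psi\in\Mult(H^2_d)$, $\|\psi\|_{\Mult(H^2_d)}\le1$, $\psi\neq1$, and $1-\psi$ cyclic; since $\phi\in\Mult(H^2_d)\subset\npu$ and multiplication on $\npu$ is continuous, it is enough to treat $\frac1{1-\psi}$. Rather than its dilations one uses the resolvents $\frac1{1-r\psi}\in\Mult(H^2_d)$ (bounded in multiplier norm by $\frac1{1-r}$ via von Neumann's inequality), which lie in $H^2_d\subset\npu$; as $(\npu,\rho)$ is complete it suffices to prove $\vertiii{\frac1{1-r\psi}-\frac1{1-\psi}}\to0$ as $r\to1$. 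From the identity $\frac1{1-r\psi}-\frac1{1-\psi}=-\frac{(1-r)\psi}{1-r\psi}\cdot\frac1{1-\psi}$ and the bound $\bigl|\frac{1-\psi}{1-r\psi}\bigr|\le2$ on $\mathbb{B}_d$ (Lemma~\ref{lem:vNI}) one obtains on each slice $\bigl|\frac1{1-r\psi}-\frac1{1-\psi}\bigr|\circ i_\zeta\le\frac{2(1-r)}{|(1-\psi)\circ i_\zeta|^2}$; and since $1-\psi$ is zero-free, $\log|(1-\psi)\circ i_\zeta|$ is harmonic with mean $\log|1-\psi(0)|$, so
\[
\int_0^{2\pi}\log\!\Bigl(1+\frac{2(1-r)}{|(1-\psi)(\rho e^{i\theta}\zeta)|^2}\Bigr)\frac{d\theta}{2\pi}
=\int_0^{2\pi}\log\!\bigl(|(1-\psi)(\rho e^{i\theta}\zeta)|^2+2(1-r)\bigr)\frac{d\theta}{2\pi}-2\log|1-\psi(0)| .
\]
For any $\delta>0$ the function $z\mapsto\log\bigl(|(1-\psi)\circ i_\zeta(z)|^2+\delta\bigr)$ is subharmonic on $\mathbb{D}$ (it is $\log$ of the squared norm of the holomorphic map $z\mapsto\bigl((1-\psi)\circ i_\zeta(z),\sqrt{\delta}\,\bigr)$), so its circular means increase in $\rho$ to the boundary integral $\int_0^{2\pi}\log(|((1-\psi)\circ i_\zeta)(e^{i\theta})|^2+\delta)\frac{d\theta}{2\pi}$, which — since $(1-\psi)\circ i_\zeta$ is outer — decreases as $\delta\downarrow0$ to $\int_0^{2\pi}\log|((1-\psi)\circ i_\zeta)(e^{i\theta})|^2\frac{d\theta}{2\pi}=2\log|1-\psi(0)|$. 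Thus everything reduces to proving
\[
\sup_{\zeta\in\partial\mathbb{B}_d,\ \rho\in(0,1)}\ \int_0^{2\pi}\log\!\Bigl(1+\frac{\delta}{|(1-\psi)(\rho e^{i\theta}\zeta)|^2}\Bigr)\frac{d\theta}{2\pi}\ \xrightarrow[\ \delta\to0\ ]{}\ 0 .
\]

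\emph{The main obstacle.} The last displayed uniform estimate is where the real work lies; I expect it, rather than the algebra and reductions above, to be the technical heart. It asserts the uniform integrability, over all slices $\zeta$ and radii $\rho$, of the functions $\theta\mapsto\log^-|(1-\psi)(\rho e^{i\theta}\zeta)|$. The essential input is that $1-\psi$ is not an arbitrary bounded holomorphic function but a multiplier of $H^2_d$: the non-negative pluriharmonic function $\log\|1-\psi\|_{\Mult(H^2_d)}-\log|1-\psi|$ on $\mathbb{B}_d$ has an \emph{absolutely continuous} boundary (Herglotz) measure — this is exactly the content of the slices of $1-\psi$ being outer — with density in $L^1(\partial\mathbb{B}_d)$, and its values on a slice are Poisson averages of the corresponding slice densities. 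One then upgrades the slicewise $\delta\to0$ convergence to a uniform one by a compactness/Dini argument on the compact parameter space of slices, having first reduced the $\rho$-part by the subharmonicity already used; the multiplier-norm control on $1-\psi$ is what keeps the slice densities tight enough (e.g.\ through a weak-type maximal estimate on $\partial\mathbb{B}_d$). Granting this, $\vertiii{\frac1{1-r\psi}-\frac1{1-\psi}}\to0$, so $\frac1{1-\psi}\in\npu$ by completeness, hence $1/m\in\npu$, which by the first paragraph completes the proof.
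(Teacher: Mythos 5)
Your reductions are correct and quite clean: the theorem does follow from the single claim $1/m\in\npu$ (via the algebra structure of $(\npu,\rho)$ from Proposition~\ref{lem:F-space} and the factorization of the inclusion through $h\mapsto mh$), Step 1 correctly gives $1/m\in N_u$, the passage to $\frac{1}{1-\psi}$ via the representation of $N^+(\htd)$ is legitimate, and the algebra with the resolvents $\frac{1}{1-r\psi}$ and the bound $\bigl|\frac{1}{1-r\psi}-\frac{1}{1-\psi}\bigr|\le 2(1-r)|1-\psi|^{-2}$ is sound. But the proof is not complete: everything hinges on the final displayed claim, namely that $\sup_{\zeta,\rho}\int\log\bigl(1+\delta\,|(1-\psi)(\rho e^{i\theta}\zeta)|^{-2}\bigr)\,\frac{d\theta}{2\pi}\to 0$ as $\delta\to 0$, which is precisely a \emph{uniform integrability over all slices} of $\log^-|(1-\psi)\circ i_\zeta|$ on $\partial\D$. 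You correctly identify this as the heart of the matter, but the sketch you offer does not establish it. Knowing that each slice of $1-\psi$ is outer gives you, for each fixed $\zeta$, an absolutely continuous slice Herglotz measure with total mass $\log 2-\log|1-\psi(0)|$ independent of $\zeta$; a family of non-negative $L^1$ functions with uniformly bounded norms need not be uniformly integrable, and nothing in "contractive multiplier with cyclic $1-\psi$" visibly rules out slices whose boundary moduli have ever deeper, ever narrower valleys as $\zeta$ varies. The proposed Dini/compactness argument requires continuity in $\zeta$ of quantities built from a.e.-defined boundary values, which you do not have, and the "weak-type maximal estimate on $\partial\B_d$" is not specified. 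So the one genuinely hard point is exactly the point left open.

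It is worth seeing how the paper avoids this issue entirely, because the mechanism is different. Both of the paper's proofs obtain slice-uniformity \emph{for free} from two slice-independent ingredients: the pointwise inequality $\log(1+|f|)\le\log(1+|fm\varphi|)-\log|m\varphi|$, whose first term is controlled uniformly over all slices by the single global quantity $\|fm\varphi\|_{\htd}$ (Lemma~\ref{lem:DA_Smirnov_inclusion}), and whose second term is controlled on \emph{every} slice by Jensen's inequality through the same constant $-\log|m(0)\varphi(0)|$. Cyclicity of $m$ then enters only through Lemma~\ref{lem:cyclic_Kaplansky}, which produces one auxiliary multiplier $\varphi$ with $\|\varphi m\|_{\Mult(\htd)}\le 1$ and $|m(0)\varphi(0)|$ as close to $1$ as desired. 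No uniform integrability of boundary values is ever needed. If you want to salvage your route, you would need either to prove your uniform integrability claim (which I suspect requires an argument of comparable depth to Lemma~\ref{lem:cyclic_Kaplansky}, if it is true at all in the generality you need), or to replace the endgame by the Jensen-at-the-origin device above.
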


\begin{proof}
  Since $m$ is cyclic, $H^2_d$ is a dense subspace of $\frac{1}{m} H^2_d$, and we know from Lemma \ref{lem:DA_Smirnov_inclusion} that $H^2_d \subset \npu$.
  Moreover, $(\npu,\rho)$ is an F-space  by Proposition \ref{lem:F-space},
  so it suffices to show that the inclusion $H^2_d \subset \npu$ is continuous at $0$ with respect to the norm $\|\cdot\|_{\frac{1}{m} H^2_d}$ on $H^2_d$.
  Indeed, assuming this has been shown, then the inclusion $H^2_d \subset \npu$ is uniformly
  continuous with respect to $\|\cdot\|_{\frac{1}{m} H^2_d}$ and $\vertiii{\cdot}$,
  hence by completness of $\npu$, it extends to a continuous inclusion
from $ \frac{1}{m} H^2_d$ to $N^+_u$.

  Thus, we have to show the following statement:
  For each $\varepsilon > 0$, there exists $\delta > 0$ so that $f \in H^2_d$ and
  $\|m f\|_{H^2_d} < \delta$ implies $\vertiii{f} < \varepsilon$.
  To see this, let $\zeta \in \partial \mathbb{B}_d$. Since the slice function $f \circ i_\zeta$
  is in the Smirnov class on $\mathbb{D}$, we see that
  \begin{equation*}
    \sup_{0 < r < 1} \int_{0}^{2 \pi} \log( 1 + |f( r \zeta e^{i \theta})|) \frac{d \theta}{2 \pi}
    = \int_{0}^{2 \pi} \log( 1 + |f(\zeta e^{ i \theta})|) \frac{d \theta}{2 \pi}.
  \end{equation*}
  If $\varphi \in \Mult(H^2_d)$  with $\varphi(0) \neq 0$ and $\|\varphi m\|_{\Mult(H^2_d)} \le 1$,
  then $(m \varphi) \circ i_\zeta$ is non-zero almost everywhere on $\partial \mathbb{D}$.
  Using the  inequality $\log(1 + a b) \le \log(1 + a) + \log(b)$ for $a \ge 0$ and $b \ge 1$,
  we therefore find that
  \begin{align*}
    \log(1 + |f(\zeta e^{i \theta})|)
    &=
  \log \Big(1 + \Big| \frac{f(\zeta e^{i \theta}) m(\zeta e^{i \theta}) \varphi(\zeta e^{i \theta})}{m(\zeta e^{i \theta}) \varphi(\zeta e^{i \theta})} \Big| \Big) \\
    &\le \log (1 + | {f(\zeta e^{i \theta}) m(\zeta e^{i \theta}) \varphi(\zeta e^{i \theta})}) | )
  - \log |m(\zeta e^{i \theta}) \varphi(\zeta e^{i \theta})|
  \end{align*}
  for almost every $\theta$.
  Integrating in $\theta$, using Lemma \ref{lem:DA_Smirnov_inclusion} on the first summand and Jensen's inequality for holomorphic
  functions on the second summand, it follows that
  \begin{align*}
    \sup_{0 < r < 1} \int_{0}^{2 \pi} \log( 1 + |f( r \zeta e^{i \theta})|) \frac{d \theta}{2 \pi}
    \le \|f m \varphi\|_{H^2_d} - \log |m(0) \varphi(0)|.
  \end{align*}
  Taking the supremum over all $\zeta \in \partial \mathbb{B}_d$, we obtain
  \begin{equation}
    \label{eqn:cont}
    \vertiii{f} \le \|f m\|_{H^2_d} \|\varphi\|_{\Mult(H^2_d)} - \log |m(0) \varphi(0)|,
  \end{equation}
  which is true for any $\varphi \in \Mult(H^2_d)$ with $\varphi(0) \neq 0$
  and $\|\varphi m\|_{\Mult(H^2_d)} \le 1$.
  Now, let $\varepsilon > 0$. Since $m$ is cyclic, Lemma \ref{lem:cyclic_Kaplansky} yields
  a multiplier $\varphi \in \Mult(H^2_d)$ with $\|\varphi m\|_{\Mult(H^2_d)} \le 1$
  and $\log |m(0) \varphi(0)| > - \varepsilon/2$. Set $\delta = \varepsilon/ (2\|\varphi\|_{\Mult(H^2_d)})$.
  Then \eqref{eqn:cont} shows that if $\|f m \|_{H^2_d} < \delta$, then $\vertiii{f} <  \varepsilon$.
\end{proof}

An alternate way to prove Theorem \ref{thmH1} is to let $X = \htd$ in the following theorem.
We define the norm on $\frac1{g}X$ by $\|\frac{f}{g}\|_{\frac1{g}X}:= \|f\|_X$.

\begin{theorem}\label{inclusion} Suppose that $X$ is a Banach space of holomorphic functions
on $\B_d$ that contains the constants and such that ${\rm Mult}(X)$ is dense in $X$.
If $X$ is
continuously contained in $\npu$, and  $g\in X$ is cyclic,  then $\frac1{g}X$ is continuously contained in $\npu$.
\end{theorem}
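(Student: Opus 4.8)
The plan is to mimic the structure of the proof of Theorem~\ref{thmH1}, replacing the two key ingredients (containment $H^2_d\subset N^+_u$ with norm control, and Jensen's inequality for the auxiliary multiplier) with their abstract analogues. Since $(\npu,\rho)$ is an F-space and $X$ is dense in $\frac1gX$ (because $g$ is cyclic and $\mathrm{Mult}(X)$ is dense in $X$, so $\{g^{-1}\psi h: \psi\in\mathrm{Mult}(X)\}$-type arguments, or more directly the fact that $\{gh:h\in X\}$ is dense in $X$, give density of $X$ in $\frac1gX$), it suffices to show the inclusion $X\hookrightarrow\npu$ is continuous at $0$ with respect to $\|\cdot\|_{\frac1gX}$; then it extends by uniform continuity and completeness of $\npu$. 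Concretely: for each $\varepsilon>0$ I must produce $\delta>0$ such that $f\in X$ with $\|gf\|_X<\delta$ forces $\vertiii{f}<\varepsilon$.

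The estimate proceeds slicewise exactly as before. Fix $\zeta\in\partial\B_d$. Since $X\subset\npu$, each slice $f\circ i_\zeta$ lies in $N^+(\D)$, so the $\sup_r$ in the definition of $\vertiii{\cdot}$ is achieved on the boundary values. Now I need an auxiliary multiplier $\varphi\in\mathrm{Mult}(X)$ with $\varphi(0)\neq 0$ and $\|\varphi g\|_{\mathrm{Mult}(X)}$ controlled — but here is the first structural difference: Theorem~\ref{thmH1} used the complete Pick structure of $H^2_d$ (via Lemma~\ref{lem:cyclic_Kaplansky}) to get $\varphi$ with $\|\varphi g\|_{\mathrm{Mult}(X)}\le 1$ and $\log|g(0)\varphi(0)|>-\varepsilon/2$, and it used that $\mathrm{Mult}(H^2_d)$-norm dominates the sup-norm on the circle. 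For a general Banach space $X$ of holomorphic functions we have no such Kaplansky density theorem. Instead I would argue as follows: since $g$ is cyclic in $X$ and $\mathrm{Mult}(X)$ is dense in $X$, the set $\{\psi g:\psi\in\mathrm{Mult}(X)\}$ is dense in $X$; as $X$ contains the constants, I can approximate the constant function $1$ in $X$-norm by $\psi g$ with $\psi\in\mathrm{Mult}(X)$, and since $X$ is continuously contained in $\npu$, hence (via \eqref{eqaa6} applied at the point $0$, or simply continuity of point evaluation at $0$ on $X$) $X$-convergence implies convergence at $0$, I get $\psi g$ with $(\psi g)(0)$ as close to $1$ as I like; in particular $\psi(0)\neq0$ and $\log|g(0)\psi(0)|>-\varepsilon/2$. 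The role of "$\|\varphi g\|_{\mathrm{Mult}(X)}\le1$" in Theorem~\ref{thmH1} is only to make $fmg\varphi$ an element of $H^2_d$ with norm $\le\|fm\|_{H^2_d}$; in the abstract setting I instead keep $\varphi$ in $\mathrm{Mult}(X)$ fixed and bound $\vertiii{fg\varphi}$ using the hypothesis $X\hookrightarrow\npu$: $\vertiii{fg\varphi}\le C\|fg\varphi\|_X\le C\|\varphi\|_{\mathrm{Mult}(X)}\|fg\|_X$, where $C$ is the norm of the inclusion $X\hookrightarrow\npu$.

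Putting it together: with $\varphi\in\mathrm{Mult}(X)$ as above, for almost every $\theta$ the inequality $\log(1+ab)\le\log(1+a)+\log b$ (valid for $a\ge0$, $b\ge1$) applied to $|f(\zeta e^{i\theta})|=\bigl|\tfrac{f g\varphi}{g\varphi}(\zeta e^{i\theta})\bigr|$ — after normalizing $\varphi$ and $g$ so that $\|g\varphi\|_\infty\ge$ (a suitable constant) on the slice, or more cleanly by writing $\log(1+|f|) \le \log(1+|fg\varphi|) - \log|g\varphi|$ whenever $|g\varphi|\ge1$ and absorbing the bounded region $\{|g\varphi|<1\}$ — gives, upon integrating in $\theta$ and using Jensen's inequality for the holomorphic function $g\varphi$ on $\D$ (so $\int\log|g(\zeta e^{i\theta})\varphi(\zeta e^{i\theta})|\,\tfrac{d\theta}{2\pi}\ge\log|g(0)\varphi(0)|$),
\[
\sup_{0<r<1}\int_0^{2\pi}\log(1+|f(r\zeta e^{i\theta})|)\,\frac{d\theta}{2\pi}
\;\le\; \vertiii{fg\varphi} - \log|g(0)\varphi(0)|
\;\le\; C\,\|\varphi\|_{\mathrm{Mult}(X)}\,\|fg\|_X + \varepsilon/2.
\]
Taking the supremum over $\zeta\in\partial\B_d$ yields $\vertiii{f}\le C\|\varphi\|_{\mathrm{Mult}(X)}\|fg\|_X+\varepsilon/2$, so choosing $\delta=\varepsilon/(2C\|\varphi\|_{\mathrm{Mult}(X)})$ finishes the argument.

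The main obstacle I anticipate is exactly the step that in Theorem~\ref{thmH1} was handled by Lemma~\ref{lem:cyclic_Kaplansky}: producing an auxiliary multiplier $\varphi$ with $\varphi(0)\neq0$ that simultaneously makes $g\varphi$ usable in Jensen's inequality (which needs only $g\varphi\in\mathrm{Mult}(X)\subset\npu$, so that $g\varphi\circ i_\zeta\in N^+(\D)$ and Jensen applies — this is automatic) \emph{and} keeps $\|fg\varphi\|_X$ bounded by $\|fg\|_X$ times a constant independent of $f$. The clean resolution is to note that $\varphi$ depends only on $\varepsilon$ (not on $f$), so $\|\varphi\|_{\mathrm{Mult}(X)}$ is a legitimate constant; the only genuine input needed beyond the hypotheses is the slice fact that $g\varphi\circ i_\zeta$ has well-defined boundary values and is non-vanishing a.e.\ on each slice circle, which follows because $g\varphi\in\mathrm{Mult}(X)\subset\npu\subset N_u$ and a function in $N(\D)$ that is not identically zero cannot vanish on a set of positive measure — and $g\varphi\circ i_\zeta\not\equiv0$ since $(g\varphi)(0)\neq0$. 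I should also double-check the harmless technical point that the inequality $\log(1+|f|)\le\log(1+|fg\varphi|)-\log|g\varphi|$ requires $|g\varphi|\ge1$; on the complementary set one bounds $\log(1+|f|)$ directly, but in fact it is cleaner to first rescale: replace $\varphi$ by $\varphi/\|\varphi\|_\infty$ — no: $\|g\varphi\|_\infty$ could be $<1$. The honest fix, as in the original proof, is the inequality $\log(1+ab)\le\log(1+a)+\log b$ for $b\ge1$ applied with $a=|fg\varphi|$, $b=1/|g\varphi|$ only where $|g\varphi|\le1$, together with $\log(1+|f|)\le\log(1+|fg\varphi|)+\log(1/|g\varphi|)$ everywhere that $|g\varphi|\le1$ and a trivial bound where $|g\varphi|>1$; integrating and using Jensen on $\log(1/|g\varphi|)$ (equivalently on $\log|g\varphi|$) gives the same conclusion. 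This is routine and parallels Theorem~\ref{thmH1} verbatim.
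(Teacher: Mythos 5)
Your overall strategy is the paper's: a slicewise estimate obtained by multiplying and dividing by $g\varphi$ for an auxiliary $\varphi\in\Mult(X)$ produced from cyclicity, then Jensen's inequality, then extension by density/completeness. But your central displayed inequality
\begin{equation*}
\sup_{0<r<1}\int_0^{2\pi}\log\bigl(1+|f(r\zeta e^{i\theta})|\bigr)\,\frac{d\theta}{2\pi}\;\le\;\vertiii{fg\varphi}-\log|g(0)\varphi(0)|
\end{equation*}
is false as stated, and the ``honest fix'' you sketch does not repair it. Take $d=1$, $f\equiv 1$, $g\varphi\equiv 2$: the left side is $\log 2$, the right side is $\log 3-\log 2<\log 2$. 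The culprit is the set where $|g\varphi|>1$. Your case split yields the pointwise bound $\log(1+|f|)\le\log(1+|fg\varphi|)+\log^+(1/|g\varphi|)$, and $\int\log^+(1/|g\varphi|)=-\int\log|g\varphi|+\int\log^+|g\varphi|$; Jensen controls only the first summand by $-\log|g\varphi(0)|$, and the leftover $\int\log^+|g\varphi|$ does not vanish. In Theorem \ref{thmH1} this term is absent because $\|\varphi m\|_{\Mult(H^2_d)}\le 1$ forces $|m\varphi|\le 1$ everywhere; in the abstract setting you have no sup-norm control on $g\varphi$, so the term is genuinely present and is exactly where the generalization has to work harder.

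The paper absorbs it by introducing, for each slice $\zeta$, the outer function $v_\zeta$ with boundary modulus $\max\{1,|g\varphi(e^{i\theta}\zeta)|\}$; the resulting extra term satisfies $\int\log|v_\zeta|\le\int\log(1+|g\varphi\circ i_\zeta-1|)\le\vertiii{g\varphi-1}$, and $\varphi$ is chosen so that $\vertiii{g\varphi-1}-\log|g\varphi(0)|<\varepsilon/2$ \emph{jointly}. Note that your own construction of $\varphi$ (approximate $1$ by $g\psi$ in $X$-norm and push through the continuous inclusion $X\hookrightarrow\npu$) already makes $\vertiii{g\varphi-1}$ small --- you just never invoke it, because you only extract the value at $0$. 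Two secondary slips: the linear bound $\vertiii{h}\le C\|h\|_X$ is not available from ``$X$ is continuously contained in $\npu$,'' since $\vertiii{\cdot}$ is not homogeneous; you must keep the $\varepsilon$--$\delta$ formulation, as the paper does (this changes nothing structurally). And the dense subset of $\frac{1}{g}X$ that is known to lie in $\npu$ should be $\Mult(X)$ (dense because $g\Mult(X)$ is dense in $X$) rather than $X$ itself, since $X\subset\frac{1}{g}X$ would require $g\in\Mult(X)$.
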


\begin{proof} We prove first that $\frac1{g}X$ is continuously contained in $N_u$. To this end,
let $\varphi\in \Mult(X)$.  If $\z\in\partial\B_d$,  let $v_\z$ be the outer function
on $\D$  with $|v_\z(e^{i\theta})|=\max\{1,|g\varphi(e^{i\theta}\z)|\}$ a.e.. Then  $|v_\z|\ge \max\{1,|g\varphi\circ i_\z|\}$ in $\D$, which gives
\be
\label{est1}
\vertiii{\frac{f}{g}}=\vertiii{\frac{f\varphi}{g\varphi}} \le \sup_{\z\in \B_d}\sup_{0<r<1}\int_0^{2\pi}\log\left(1+\left|\frac{f\varphi v_\z}{g\varphi}(re^{i\theta}\z)\right|\right)\frac{d\theta}{2\pi}.\ee
Using $(1+ab)\le(1+a)b,~a\ge 0,b\ge 1$,   with $b=\left|\frac{ v_\z}{g\varphi}(re^{i\theta}\z)\right|$, it follows that
\begin{align}\label{est2}&\int_0^{2\pi}\log\left(1+\left|\frac{f\varphi v_\z}{g\varphi}(re^{i\theta}\z)\right|\right)\frac{d\theta}{2\pi}
\le \int_0^{2\pi}\log(1+|f\varphi(re^{i\theta}\z)|)\frac{d\theta}{2\pi}\\&\nonumber
+\int_0^{2\pi}\log\left|\frac{ v_\z}{g\varphi}(re^{i\theta}\z)\right|\frac{d\theta}{2\pi}.
\end{align}
\att
Using the definition of $v_\z$ together with  the inequality $|a|\le 1+|a-1|$, we obtain
\beq
\int_0^{2\pi}\log|v_\z(re^{i\theta})|\frac{d\theta}{2\pi}&\ \le \ & \int_0^{2\pi}\log|v_\z(e^{i\theta})|\frac{d\theta}{2\pi}
\\ &\le &
\int_0^{2\pi}\log(1+|g\varphi\circ i_\z(e^{i\theta})-1|)\frac{d\theta}{2\pi}\\&\le&\vertiii{g\varphi-1}.
\eeq
Moreover, since $g\varphi\circ i_\z$ is analytic in $\D$,
$$\int_0^{2\pi}\log\left|\frac{1}{g\varphi}(re^{i\theta}\z)\right|\frac{d\theta}{2\pi}\le -\log|g\varphi(0)|.$$
Now let $\varepsilon>0$. Since  $g$ is cyclic and $X$ is continuously contained in $\npu$, we can choose $\varphi\in \Mult(X)$ such that
$$ \vertiii{g\varphi-1}-\log|g\varphi(0)|<\frac{\varepsilon}{2},$$
and from \eqref{est1} and \eqref{est2} we obtain for this choice of $\varphi$
\be
\label{est3} \vertiii{\frac{f}{g}}<\frac{\varepsilon}{2}+\vertiii{f\varphi}.
\ee
Since $\varphi$ is a multiplier and  $X\subset \npu$, there is $\delta>0$
such that $\|\frac{f}{g}\|_{\frac1{g}X}= \|f\|_X<\delta$ implies $\vertiii{f\varphi}<\frac{\varepsilon}{2}$.  \\
 Thus $\frac1{g}X$ is continuously contained in $N_u$ and by assumption, the inclusion maps the dense set $X$ into $\npu$. Since  $\npu$  is closed in $N_u$ by Proposition \ref{lem:F-space}, the result follows.
\end{proof}

\begin{remark}
Note that Theorem \ref{thmH1} shows that the set $N^+(\htd)$ is contained in $\npu$, but it does {\em not} show that the inclusion from $(N^+(\htd), \I)$ to $(\npu, \rho)$ is continuous.
Indeed, let $\Inl$ be the non locally convex  inductive limit topology on $\cup \frac{1}{m} \htd$,
where a neighborhood base at $0$ is give by {\em all} sets $\Omega$ such that $\Omega \cap \frac{1}{m} \htd$ is open for all cyclic multipliers $m$. Then Theorem \ref{thmH1} says that the
inclusion from  $(N^+(\htd), \Inl)$ to $(\npu, \rho)$ is continuous, since the inverse image under inclusion of every open set in $(\npu , \rho)$ is open in $\Inl$.

Even when $d =1$ the inclusion from $(N^+(\htd), \I)$ to $(\npu, \rho)$ is not continuous,
since the two sets coincide, the inclusion from $(N_u^+,\rho)$ to $(N^+(H^2_d),\mathcal{I})$ is continuous
by \cite[Theorem 2.1]{mcc90c}, but the topology induced by $\rho$
is not locally convex, as remarked  by Yanagihara in
 \cite{yan73a}.
It follows that the inclusion cannot be continuous for $d > 1$ either,
since otherwise restricting to functions that depend only on one coordinate would yield a contradiction
(by using Lemma \ref{lem:IL_inclusion} below).
\end{remark}


\section{The dual of \texorpdfstring{$(\npu, \rho)$}{the uniform Smirnov class}}
\label{secc}

In this section, we prove Theorem~\ref{thmAB3}. We shall assume throughout that $d$ is a fixed integer greater than $0$.

For $c > 0$, let
\be
\label{eqb7}
\phic(z) \ = \ \exp\left(\frac{c}{8}\frac{1+z}{1-z}\right) - 1 .
\ee
This function is {\em not} in $N^+(\D)$, nonetheless the following two facts hold
(the first is proved in \cite[II.11.2]{prig}, the second in \cite[Lemma 1.5]{mcc92a}).
\bl
\label{lemb5}
With $\phic$ defined by \eqref{eqb7}, we have for $n>0$
\[
 \widehat \phic (n) \= e^{c \sqrt{n}(1 + o(1))} .
 \]
 Moreover,
 \[
 \lim_{c \downarrow 0} \smfc \= 0 .
 \]
 \el

For  $\zeta \in \mathbb{B}_d$, let
\begin{equation*}
  P_\zeta: \mathbb{C}^d \to \mathbb{C}, \quad z \mapsto \langle z,\zeta \rangle,
\end{equation*}
denote the orthogonal projection onto $\mathbb{C} \zeta$.
If $\Gamma$ is a continuous linear functional on $H^2_d$, let
\begin{equation*}
  \Gamma_\zeta : H^2 \to \mathbb{C}, \quad f \mapsto \Gamma(f \circ P_\zeta),
\end{equation*}
denote the continuous slice functional.
We can regard these functionals as densely defined functionals on $N^+(\mathbb{D})$.
The proof of the following lemma is an adaptation of the proof of \cite[Theorem 1.7]{mcc92a}.

\begin{lem}
  \label{lem:slice_equicont_decay}
  Let $\Gamma = \langle \cdot,h \rangle_{H^2_d}$ be a continous linear functional on $H^2_d$
  and let $h = \sum_{n=0}^\infty h_n$ be the homogeneous decomposition of $h$.
  If the slice functionals $\{\Gamma_\zeta: \zeta \in \partial \mathbb{B}_d\}$
  are equicontinuous on $(N^+(\mathbb{D}), \rho)$,
  then there exists $c > 0$ so that $\|h_n\|_{\infty} = O(e^{-c \sqrt{n}})$.
\end{lem}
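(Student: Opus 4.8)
The plan is to exploit the known one–variable fact (Yanagihara's theorem, stated above) that a continuous linear functional on $(N^+(\mathbb D),\rho)$ has coefficients decaying like $M e^{-c\sqrt n}$, and to upgrade it to a \emph{uniform} decay statement over all slices by using the assumed equicontinuity. Concretely, for a fixed $\zeta\in\partial\mathbb B_d$ write $h\circ P_\zeta=\sum_n \gamma_n(\zeta) z^n$, where $\gamma_n(\zeta)$ is a polynomial in $\zeta,\bar\zeta$; in fact $\langle f\circ P_\zeta,h\rangle_{H^2_d}=\Gamma_\zeta(f)$ for $f\in H^2$, and a short computation identifies $\gamma_n(\zeta)$ with $\overline{\langle \cdot,h\rangle}$ evaluated on $z^n$ after the slice, so that $|\gamma_n(\zeta)|$ is comparable to $|h_n(\zeta)|$ up to a combinatorial factor that is at most polynomial in $n$ (this is where Lemma~\ref{lem:norm_equiv} enters). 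Thus it suffices to bound $|\gamma_n(\zeta)|$ uniformly in $\zeta$ by $M e^{-c\sqrt n}$ for a single pair $M,c$.

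First I would test the slice functionals against the functions $\phi_c$ from \eqref{eqb7}. By Lemma~\ref{lemb5}, $\vertiii{\phi_c}\to 0$ as $c\downarrow 0$; by the assumed equicontinuity of $\{\Gamma_\zeta\}$ on $(N^+(\mathbb D),\rho)$, there is a $\rho$-neighborhood $U$ of $0$ on which all $\Gamma_\zeta$ are bounded by $1$. Fix $c>0$ small enough that every scalar multiple (or rotation) of $\phi_c$ that we need lies in $U$; precisely, for $|w|=1$ the rotated function $z\mapsto \phi_c(wz)$ still has the same $\rho$-``size'' since $\rho$ is rotation invariant, so $\Gamma_\zeta(z\mapsto\phi_c(wz))$ is bounded by $1$ for all $\zeta$ and all $|w|=1$. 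Writing out $\Gamma_\zeta$ applied to $z\mapsto \phi_c(wz)$ as $\sum_n \widehat{\phi_c}(n)\,w^n\,\overline{\gamma_n(\zeta)}$ and integrating against $\bar w^k\,dw/2\pi i$ over the unit circle picks out the single term $\widehat{\phi_c}(k)\,\overline{\gamma_k(\zeta)}$, giving $|\widehat{\phi_c}(k)|\,|\gamma_k(\zeta)|\le 1$, i.e.
\[
  |\gamma_k(\zeta)| \le \frac{1}{|\widehat{\phi_c}(k)|}
  \quad\text{for all }k\ge 1,\ \zeta\in\partial\mathbb B_d.
\]
By Lemma~\ref{lemb5}, $\widehat{\phi_c}(k)=e^{c\sqrt k(1+o(1))}$, so $|\gamma_k(\zeta)|\le e^{-c'\sqrt k}$ for all large $k$ with $c'$ a fixed constant slightly smaller than $c$; absorbing the finitely many small $k$ into a constant $M$ gives $|\gamma_k(\zeta)|\le M e^{-c'\sqrt k}$ uniformly in $\zeta$.

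Finally I would translate back to $\|h_n\|_\infty$. Having $\sup_\zeta |\gamma_n(\zeta)|\le M e^{-c'\sqrt n}$ and the identification of $\gamma_n(\zeta)$ with (a polynomial-in-$n$ multiple of) $h_n(\zeta)$ — or, cleaner, directly estimating $\|h_n\|_{H^2_d}$ via Lemma~\ref{lem:norm_equiv}(b) together with the fact that the $\ell^2$-coefficients of $h_n$ are controlled by the $\gamma_n(\zeta)$ through an averaging/integration over $\partial\mathbb B_d$ — yields $\|h_n\|_{H^2_d}\le M'(n+1)^{(d-1)/2}e^{-c'\sqrt n}$, and then Corollary~\ref{cor:decay_equiv} (or just absorbing the polynomial factor by slightly decreasing the exponent) gives $\|h_n\|_\infty=O(e^{-c\sqrt n})$ for a new $c>0$, as claimed. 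The main obstacle I anticipate is the bookkeeping in the identification of $\gamma_n(\zeta)$ with $h_n$: one must be careful that $f\mapsto f\circ P_\zeta$ maps $z^n\in H^2$ to $\langle z,\zeta\rangle^n\in H^2_d$, whose $H^2_d$-norm is $\omega_{(n,0,\dots,0)}$-type and not $1$, and that pairing with $h$ then produces exactly $h_n(\zeta)$ up to these normalizing constants; once this is pinned down the rest is the extraction-of-a-single-Taylor-coefficient trick above, which is routine. A secondary point to check is that rotation invariance of $\rho$ and of the class $N^+_u$ genuinely lets us feed all rotates $\phi_c(w\,\cdot)$ into the same equicontinuity neighborhood — this follows since $\rho(f(w\cdot),0)=\rho(f,0)$ for $|w|=1$.
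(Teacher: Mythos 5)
Your proposal follows essentially the same route as the paper: identify the slice pairing as $\Gamma_\zeta(f)=\sum_n \widehat f(n)\,\overline{h_n(\zeta)}$, use equicontinuity to get a single $\rho$-ball on which all $\Gamma_\zeta$ are bounded by $1$, extract individual coefficients by rotating and integrating, and test against $\phi_c$ using Lemma~\ref{lemb5}. Two remarks. First, your worry about combinatorial normalizing factors is unfounded: since $\langle\cdot,\zeta\rangle^n$ is the reproducing kernel for evaluation of the degree-$n$ homogeneous component at $\zeta$, one has $\langle\langle\cdot,\zeta\rangle^n,h_n\rangle_{H^2_d}=\overline{h_n(\zeta)}$ exactly, so your $\gamma_n(\zeta)$ \emph{is} $h_n(\zeta)$ and Lemma~\ref{lem:norm_equiv} is not needed for this identification (nor is the detour through $\|h_n\|_{H^2_d}$ at the end: once $\sup_\zeta|h_n(\zeta)|\le Me^{-c'\sqrt n}$ is established you are done, since $h_n$ is homogeneous).

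Second, there is one step that fails as written: you apply $\Gamma_\zeta$ directly to $z\mapsto\phi_c(wz)$. But $\phi_c$ does not belong to $N^+(\mathbb D)$ (the paper points this out immediately after \eqref{eqb7}), and its Taylor coefficients grow like $e^{c\sqrt n}$, so it is not in $H^2$ either; the functionals $\Gamma_\zeta$ are only defined on $H^2$ and (densely) on $N^+(\mathbb D)$, and the series $\sum_n\widehat{\phi_c}(n)w^n\overline{\gamma_n(\zeta)}$ has no a priori reason to converge. The standard repair --- and what the paper does --- is to test instead against the dilates $\phi_c(rz)$ for $r<1$, which lie in $H^2$ and satisfy $\vertiii{\phi_c(r\cdot)}\le\vertiii{\phi_c}<\delta$; this yields $r^n|\widehat{\phi_c}(n)|\,|h_n(\zeta)|\le 1$ for all $r<1$, and letting $r\to 1$ gives the desired bound. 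With that modification (and noting that rotating the test function by $w$ is equivalent to rotating the slice direction $\zeta\mapsto e^{it}\zeta$, which is how the paper phrases the Fourier extraction), your argument is correct.
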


\begin{proof}
  If $f = \sum_{n=0}^\infty \widehat f(n) z^n \in H^2$, then
  \beq
    \Gamma_\zeta(f) &\ = \ & \langle f \circ P_\zeta, h \rangle
    \\ &=& \sum_{n=0}^\infty \widehat f(n) \langle  \langle z,\zeta \rangle^n , h_n \rangle \\
    & =& \sum_{n=0}^\infty \widehat f(n) \ol{h_n(\zeta)}.
  \eeq
  Since the functionals $\{\Gamma_\zeta: \zeta \in \partial \mathbb{B}_d\}$ are equicontinuous
  with respect to $\rho$, there exists $\delta > 0$ such that if $f \in H^2$ and
  $\rho(f,0) < \delta$, then $|\Gamma_\zeta(f)| \le 1$ for all $\zeta \in \partial \bB_d$.
  Thus, if $f \in H^2$ and $\rho(f,0) < \delta$, then for all $n \in \bN$ and all $\zeta \in \partial \bB_d$,
  \begin{equation}
    \label{eqn:taylor_estimate}
    |\widehat f(n) h_n(\zeta)| = \Big| \int_{0}^{2 \pi} \Gamma_{e^{ it} \zeta} (f) e^{i n t} \, d t \Big|
    \le 1.
  \end{equation}

Let $\phic$ be as in \eqref{eqb7}.
By the first assertion of Lemma \ref{lemb5},
we may choose a null sequence $(\varepsilon_n)$ of real numbers so that $|\widehat \phic(n)| \ge e^{c \sqrt{n} (1 - \varepsilon_n)}$. By the second part,
  there exists $c > 0$ such that $\rho(\phic(rz),0) < \delta$ for all $r \in (0,1)$. Applying
  \eqref{eqn:taylor_estimate} to the functions $\phic(rz)$, we therefore find that
  for all $\zeta \in \partial \bB_d$,  $r \in (0,1)$ and $n \in \bN$,
  \begin{equation*}
    r^n e^{c \sqrt{n} (1 - \varepsilon_n)} |h_n(\zeta)| \le r^n | \widehat \phic(n) h_n(\zeta)| \le 1.
  \end{equation*}
  Taking the limit $r \to 1$, we see that for all $\zeta \in \partial \bB_d$,
  \begin{equation*}
    |h_n(\zeta)| \le e^{- c \sqrt{n} (1 - \varepsilon_n)},
  \end{equation*}
  so $\|h_n\|_\infty = O(e^{-c \sqrt{n}})$, for some slightly smaller $c > 0$.
\end{proof}

We can now prove Theorem \ref{thmAB3}, which we restate in equivalent form.

\begin{theorem}
  \label{thm:dual_uniform_smirnov}
  The functional
  \begin{equation*}
    \Gamma: f \mapsto \sum_{\alpha \in \mathbb{N}^d} \omega_\alpha^2 \widehat{f}(\alpha)  \overline{\widehat{h}(\alpha)}
  \end{equation*}
  is in $(N_u^+, \rho)^*$ if and only if there exist constants $M, c > 0$ so that
  \begin{equation}
    \label{eqn:dual_suff}
    |\widehat{h}(\alpha)| \omega_\alpha \le M e^{-c \sqrt{|\alpha|}}.
  \end{equation}
\end{theorem}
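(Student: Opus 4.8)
The plan is to prove the two implications of Theorem~\ref{thm:dual_uniform_smirnov} separately, leaning on the auxiliary results already established. As a preliminary I would note that under either hypothesis $h \in H^2_d$: if \eqref{eqn:dual_suff} holds then $\sum_\alpha |\widehat h(\alpha)|^2 \omega_\alpha^2 \le M^2 \sum_\alpha e^{-2c\sqrt{|\alpha|}} < \infty$ (there being $O(n^{d-1})$ multi-indices of length $n$); conversely, if $\Gamma \in (N_u^+,\rho)^*$, then since $H^2_d$ is continuously contained in $N_u^+$ by Lemma~\ref{lem:DA_Smirnov_inclusion}, the restriction of $\Gamma$ to $H^2_d$ is continuous, so by Riesz representation $\Gamma|_{H^2_d} = \langle \cdot,h'\rangle_{H^2_d}$ for some $h' \in H^2_d$, and testing against monomials gives $h' = h$. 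In both cases $\Gamma(f) = \sum_{n=0}^\infty \langle f_n,h_n\rangle_{H^2_d}$ for polynomials $f$, where $f = \sum_n f_n$ and $h = \sum_n h_n$ are the homogeneous expansions.

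For sufficiency, assume \eqref{eqn:dual_suff}; by Corollary~\ref{cor:decay_equiv} there are $A,c > 0$ with $\|h_n\|_{H^2_d} \le A e^{-c\sqrt n}$. As in the proof of Lemma~\ref{lem:Smirnov_growth}, \eqref{eqaa4} supplies $\delta > 0$ so that $\vertiii g < \delta$ implies $|\widehat g(n)| \le e^{(c/4)\sqrt n}$ for all $n$. Hence if $f \in N_u^+$ with $\vertiii f < \delta$, then applying this to each slice $f \circ i_\zeta$ (whose $n$-th Taylor coefficient is $f_n(\zeta)$) gives $\|f_n\|_\infty = \sup_{\zeta \in \partial \mathbb{B}_d} |f_n(\zeta)| \le e^{(c/4)\sqrt n}$, and Lemma~\ref{lem:norm_equiv}(a) yields $\|f_n\|_{H^2_d} \lesssim (n+1)^{(d-1)/2} e^{(c/4)\sqrt n} \lesssim e^{(c/2)\sqrt n}$. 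Therefore
\[
|\Gamma(f)| \le \sum_{n=0}^\infty \|f_n\|_{H^2_d} \|h_n\|_{H^2_d} \lesssim A \sum_{n=0}^\infty e^{-(c/2)\sqrt n} =: K < \infty
\]
whenever $\vertiii f < \delta$. A linear functional on a topological vector space that is bounded on a neighbourhood of $0$ is continuous, so $\Gamma \in (N_u^+,\rho)^*$. Applying the same estimate to $\varepsilon f$ for small $\varepsilon$ (so that $\vertiii{\varepsilon f} < \delta$) and using Cauchy--Schwarz within each homogeneous level shows in addition that $\sum_\alpha \omega_\alpha^2 |\widehat f(\alpha)||\widehat h(\alpha)| < \infty$ for every $f \in N_u^+$, so the series defining $\Gamma(f)$ converges absolutely.

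For necessity, assume $\Gamma \in (N_u^+,\rho)^*$, so $h \in H^2_d$ as above, and consider the slice functionals $\Gamma_\zeta$, $\zeta \in \partial \mathbb{B}_d$, defined before Lemma~\ref{lem:slice_equicont_decay} by $\Gamma_\zeta(f) = \Gamma(f \circ P_\zeta)$. The key observation is that for $f \in N^+(\mathbb{D})$ one has $f \circ P_\zeta \in N_u^+$ with $\vertiii{f \circ P_\zeta} \le \vertiii f$, \emph{uniformly in $\zeta$}: each slice $(f \circ P_\zeta) \circ i_\eta$ is the function $w \mapsto f(\langle \eta,\zeta\rangle w)$, i.e.\ $f$ precomposed with multiplication by a scalar $s = \langle \eta,\zeta\rangle$ of modulus $\le 1$, and $\vertiii{f(s\,\cdot)} \le \vertiii f$ whenever $|s| \le 1$ because the $2\pi$-periodic integral is rotation invariant while the range of $r \mapsto |s|r$ stays inside $(0,1)$; the analogous bound for $f - f_r$ gives $\vertiii{f\circ P_\zeta - (f\circ P_\zeta)_r} \le \vertiii{f - f_r} \to 0$. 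Since $\Gamma$ is continuous at $0$ on $(N_u^+,\rho)$, there is $\delta > 0$ with $|\Gamma(g)| \le 1$ for $\vertiii g < \delta$; then $\vertiii f < \delta$ forces $|\Gamma_\zeta(f)| = |\Gamma(f \circ P_\zeta)| \le 1$ for all $\zeta$, so $\{\Gamma_\zeta : \zeta \in \partial \mathbb{B}_d\}$ is equicontinuous on $(N^+(\mathbb{D}),\rho)$. Lemma~\ref{lem:slice_equicont_decay} then produces $c > 0$ with $\|h_n\|_\infty = O(e^{-c\sqrt n})$, and Corollary~\ref{cor:decay_equiv} upgrades this to \eqref{eqn:dual_suff}.

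The main obstacle I anticipate is in the necessity direction, specifically isolating the right uniform estimate: one must observe that $f \mapsto f \circ P_\zeta$ maps $N^+(\mathbb{D})$ contractively into $N_u^+$ with a bound independent of $\zeta$, so that mere continuity of $\Gamma$ delivers the equicontinuity hypothesis of Lemma~\ref{lem:slice_equicont_decay}; the corresponding point on the sufficiency side is to extract from \eqref{eqaa4} a fixed neighbourhood of $0$ on which the homogeneous bound $\|f_n\|_{H^2_d} \lesssim e^{(c/2)\sqrt n}$ holds, rather than the per-function bound of Lemma~\ref{lem:Smirnov_growth}. Once these are in place, the remaining steps are routine applications of Corollary~\ref{cor:decay_equiv}, Lemma~\ref{lem:norm_equiv}, and Lemma~\ref{lem:slice_equicont_decay}.
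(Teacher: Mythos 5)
Your proof is correct. The necessity half is the paper's argument verbatim in outline: represent $\Gamma$ on $H^2_d$ via Lemma~\ref{lem:DA_Smirnov_inclusion} and Riesz, observe that $f \mapsto f \circ P_\zeta$ maps $N^+(\mathbb{D})$ into $N_u^+$ with $\vertiii{f \circ P_\zeta} \le \vertiii{f}$ uniformly in $\zeta$ (the paper notes equality, but only the inequality is used), deduce equicontinuity of the slice functionals, and invoke Lemma~\ref{lem:slice_equicont_decay} and Corollary~\ref{cor:decay_equiv}. The sufficiency half deviates modestly: the paper cites Lemma~\ref{lem:Smirnov_growth} to get the per-function bound $\|f_n\|_{H^2_d} = O(e^{(c/2)\sqrt{n}})$, establishes absolute convergence by Cauchy--Schwarz, and then deduces continuity of $\Gamma = \lim_N \Gamma_N$ from the uniform boundedness principle for $F$-spaces; you instead extract from \eqref{eqaa4} a single $\delta > 0$ such that $\vertiii{f} < \delta$ forces $\|f_n\|_{H^2_d} \lesssim e^{(c/2)\sqrt{n}}$ for \emph{all} such $f$ simultaneously, bound $|\Gamma|$ by a constant on that $\rho$-ball, and conclude continuity from the elementary fact that a linear functional bounded on a neighbourhood of $0$ is continuous. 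This is legitimate --- the uniform neighbourhood version of the coefficient estimate is exactly what the paper itself asserts in the first line of the proof of Lemma~\ref{lem:Smirnov_growth} --- and it buys you a more quantitative, self-contained argument that avoids the uniform boundedness principle, at the cost of having to handle general $f \in N_u^+$ by the rescaling $f \mapsto \varepsilon f$, which you do. No gaps.
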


\begin{proof}
  (Sufficiency) Suppose $\widehat{h}(\alpha)$ satisfies \eqref{eqn:dual_suff} and let $h = \sum_{\alpha} \widehat{h}(\alpha) z^\alpha \in H^2_d$.
  We have to show that $\Gamma(f) = \langle f, h \rangle_{H^2_d}$ defines a continuous linear functional on $(N^+_u,\rho)$.
  If $h_n$ denotes the homogeneous component of degree $n$ of $h$, then Corollary
  \ref{cor:decay_equiv} implies that $\|h_n\|_{H^2_d} = O(e^{-c \sqrt{n}})$ (for potentially different $c > 0$).
  Let $f \in N_u^+$ with homogeneous decomposition $f = \sum_{n=0}^\infty f_n$. Lemma \ref{lem:Smirnov_growth}
  shows that $\|f_n\|_{H^2_d} = O(e^{c/2 \sqrt{n}})$. Hence by the Cauchy--Schwarz inequality,
  the series $\sum_{n=0}^\infty \langle f_n, h_n \rangle$
  converges absolutely. For each $N \in \mathbb{N}$, the linear functional
  $\Gamma_N(f) = \sum_{n=0}^N \langle f_n, h_n \rangle$ is continuous on $(N^+_u, \rho)$ by \eqref{eqaa6},
  so the uniform boundedness principle for F-spaces (see, e.g.\ \cite[Theorem 2.8]{Rudin91}) shows that $\Gamma(f) = \lim_{N \to \infty} \Gamma_N(f)$
  defines a continuous linear functional on $(N^+_u,\rho)$.

  (Necessity) Let $\Gamma$ be a continuous linear functional on $(N^+_u,\rho)$.
  Lemma \ref{lem:DA_Smirnov_inclusion} shows that $\Gamma$ is continuous on $H^2_d$, so there exists $h \in H^2_d$ with
  $\Gamma(f) = \langle f,h \rangle$ for all $f \in H^2_d$.
  Note that if $f \in N^+(\mathbb{D})$ and $\zeta \in \partial \mathbb{B}_d$, then $f \circ P_\zeta \in N^+_u$ with
  $\vertiii{f \circ P_\zeta} = \vertiii{f}$.
  This shows that the slice functionals $\{\Gamma_\zeta: \zeta \in \partial \mathbb{B}_d\}$ are equicontinuous
  on $N^+(\mathbb{D})$, so we deduce from Lemma \ref{lem:slice_equicont_decay} that $\|h_n\|_\infty = O(e^{-c \sqrt{n}})$ for some $c > 0$.
  Now Corollary \ref{cor:decay_equiv} yields \eqref{eqn:dual_suff}.
\end{proof}

\section{The range of adjoints of cyclic multiplication operators}
\label{secd}

First we show that Helson's theorem is true for $N^+(\htd)$.

\begin{theorem}
\label{thmD1}
Let $m \in {\rm Mult}(H^2_d)$ by cyclic.
The function $h$ is in the range of $T_m^*$ if and only if the functional
\be
\label{eqd1}
f \ \mapsto \ \sum \omat  \widehat f(\alpha) \overline{\widehat h (\alpha) } = \langle f, h \rangle_{H^2_d}
\ee
extends from $H^2_d$ to be
 a continuous linear functional on $\frac{1}{m} H^2_d$.
\end{theorem}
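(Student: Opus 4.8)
The plan is to mimic the classical duality argument relating the range of $T_m^*$ to continuity of a functional on $\frac{1}{m}H^2_d$. First I would record the basic identity: for $g \in H^2_d$ and $f \in H^2_d$, multiplication by $m$ is a unitary from $H^2_d$ onto $\frac{1}{m}H^2_d$ with the norm $\|\cdot\|_{\frac{1}{m}H^2_d}$, so the functional in \eqref{eqd1}, call it $\Gamma_h$, extends continuously to $\frac{1}{m}H^2_d$ if and only if the functional $g \mapsto \Gamma_h(g/m)$ is bounded on $H^2_d$ in $\|\cdot\|_{H^2_d}$. Now $\Gamma_h(g/m) = \langle g/m, h\rangle$ should be rewritten using the multiplication operator: since $g/m$ ranges over a dense subspace (as $m$ is cyclic, $H^2_d$ is dense in $\frac{1}{m}H^2_d$), and for $g \in H^2_d$ we formally have $\langle g/m, h \rangle = \langle g, \cdot \rangle$ for the appropriate vector. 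The cleanest route is: $\Gamma_h$ extends continuously to $\frac{1}{m}H^2_d$ $\iff$ there is a constant $C$ with $|\langle g, h\rangle| \le C\|m g\|_{H^2_d}$ for all $g \in H^2_d$ $\iff$ (substituting $u = mg$, which ranges over the dense subspace $m H^2_d \subseteq H^2_d$) there is $C$ with $|\langle T_m^{-1} u, h\rangle| \le C \|u\|$ on $m H^2_d$. Here one must be slightly careful since $T_m$ need not be bounded below, but $T_m$ is injective (as $m \not\equiv 0$) so $T_m^{-1}$ is well-defined on $m H^2_d$.

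Next, the key step is to identify $\langle T_m^{-1} u, h \rangle = \langle u, (T_m^*)^{-1}\text{-type object}\rangle$. More precisely, for $u = mg \in mH^2_d$ we have $\langle g, h \rangle_{H^2_d}$, and I claim this equals $\langle u, k\rangle$ for some fixed $k$ precisely when $h \in \ran(T_m^*)$: if $h = T_m^* k$, then $\langle g, h\rangle = \langle g, T_m^* k\rangle = \langle T_m g, k\rangle = \langle m g, k \rangle = \langle u, k\rangle$, so the functional $u \mapsto \langle T_m^{-1}u, h\rangle$ is the restriction of $\langle \cdot, k\rangle$ to $mH^2_d$, hence bounded. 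Conversely, if the functional $g \mapsto \langle g, h\rangle$ satisfies $|\langle g,h\rangle| \le C \|mg\|$ for all $g$, then $u \mapsto \langle T_m^{-1}u, h\rangle$ is a bounded functional on the subspace $mH^2_d$, so by Hahn--Banach (or Riesz on the closure) it is represented by some $k \in H^2_d$: $\langle g, h\rangle = \langle mg, k\rangle = \langle g, T_m^* k\rangle$ for all $g \in H^2_d$, whence $h = T_m^* k \in \ran(T_m^*)$. This closes the equivalence.

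The main obstacle I anticipate is the unboundedness of $T_m^{-1}$: one cannot simply say "$T_m$ is invertible." The argument above sidesteps this by working only on the dense range $mH^2_d$ and using that a densely-defined bounded functional extends uniquely; the cyclicity of $m$ is exactly what guarantees $mH^2_d$ is dense (in $H^2_d$ and hence the relevant extensions are to all of $H^2_d$). A secondary point to be careful about is matching the two descriptions of the norm on $\frac{1}{m}H^2_d$: since $H^2_d$ is dense in $(\frac{1}{m}H^2_d, \|\cdot\|_{\frac1m H^2_d})$, a functional defined on $H^2_d$ extends continuously to $\frac1m H^2_d$ iff it is $\|\cdot\|_{\frac1m H^2_d}$-bounded on $H^2_d$, which is the translation used in the first paragraph. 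With these observations in place the proof is a short chain of equivalences; I would write it as such, invoking cyclicity of $m$ for the density and the Riesz representation theorem for the representability of the bounded functional.
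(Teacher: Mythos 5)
Your proposal is correct and follows essentially the same duality argument as the paper: both directions rest on the identity $\langle g,h\rangle = \langle mg,k\rangle$ when $h = T_m^*k$, cyclicity of $m$ for density, and the Riesz representation theorem for the converse. The paper applies Riesz directly in the Hilbert space $\frac{1}{m}H^2_d$ to produce $k = g/m$, whereas you transport the functional to the dense subspace $mH^2_d \subseteq H^2_d$ first; these are the same argument up to the unitary $g \mapsto g/m$.
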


\begin{proof}
Suppose $h = T_m^* g$. We wish to show that for some constant $M$,
\be
\label{eqd2}
|\langle f,h \rangle_{H^2_d}|  \leq M \ \| m f \|_{H^2_d} .
\ee
But
\begin{equation*}
  \langle f,h \rangle = \langle f, T_m^* g \rangle = \langle mf ,g \rangle,
\end{equation*}
so \eqref{eqd2} holds with $M = \| g \|_{H^2_d}$.

Conversely, suppose \eqref{eqd2} holds.
Then
\[
| \langle f , h \rangle_{H^2_d} | \ \leq \ M  \| f \|_ { \frac{1}{m} H^2_d} ,
\]
for all $f$ in the dense set $\htd$ in $\frac{1}{m} \htd$.
By the Riesz representation theorem there exists a function $k = g/m$ in $ \frac{1}{m} H^2_d$ so that
\beq
 \langle f , h \rangle_{H^2_d} & \= &  \langle f , k \rangle_{\frac{1}{m} H^2_d}\\
 &=& \langle m f , mk \rangle_{H^2_d} \\
 &=&  \langle f , T_m^* g \rangle_{H^2_d}
 \eeq
 for all $f \in H^2_d$. Hence $h = T_m^* g$.
\end{proof}

By definition of the inductive limit topology, a linear functional on $(N^+(H^2_d),\mathcal{I})$ is continuous if and only if its restriction
to $\frac{1}{m} H^2_d$ is continuous for each cyclic multiplier $m$.
Hence, as a corollary we get:

 \vs
{\bf \noindent Theorem~\ref{thmAB1}.}
{\em
The function $h$ in $H^2_d$ is in
$\bigcap_{m\ {\rm cyclic\ multiplier}} {\rm ran}(T_m^*)$ if and only if
the functional \eqref{eqd1}
extends from $H^2_d$ to be
 a continuous linear functional on $(N^+(H^2_d), \I)$.
}

\vs
We have shown in Theorem~\ref{thmAB1} that conditions (ii) and (iii) are equivalent in
Theorem \ref{thmH2}, and in Theorem \ref{thm:dual_uniform_smirnov} that conditions (i) and (iv) are equivalent.
We can now also prove that condition (iv) is sufficient for (ii) to hold.

\vs
\begin{proof}[Proof of (iv) $\Rightarrow$ (ii) in Theorem \ref{thmH2}]

Suppose
$h(z) = \sum_{\alpha} \widehat{h} (\alpha) z^\alpha$ is in $H^2_d$
and for some $c > 0$ we have
\[
\widehat{h} (\alpha)  \= O \left( \frac{1}{\oma}\  e^{-c \sqrt{|\alpha|}} \right).
\]
By Theorem \ref{thm:dual_uniform_smirnov}, the functional $\Gamma: f \mapsto \langle f,h \rangle_{H^2_d}$
extends to a continuous functional on $(N^+_u,\rho)$. We showed in Theorem \ref{thmH1}
that for each cyclic multiplier $m$ of $H^2_d$, the space $\frac{1}{m} H^2_d$ is continuously contained
in $(N_u^+,\rho)$. Hence $\Gamma$ is continuous on $\frac{1}{m} H^2_d$ for each cyclic multiplier $m$,
so $\Gamma$ is continuous on $(N^+(H^2_d),\mathcal{I})$.
\end{proof}

\section{\texorpdfstring{Proof of necessity of \eqref{eqAB5}}{Proof of necessity}}
\label{secnec}

Recall that $P_\zeta(z) = \langle z,\zeta \rangle$.
To prove that (ii) implies (iv) in Theorem \ref{thmH2}, we will analyze continuous functionals $\Gamma$ on $(N^+(H^2_d),\mathcal{I})$
by studying the slice functionals
$\Gamma_\zeta$ on $N^+(\mathbb{D})$, defined by $\Gamma_\zeta(f) = \Gamma(f \circ P_\zeta)$.

First, we observe that outer functions in $H^\infty$ lift to cyclic mulipliers of $H^2_d$.

\begin{lemma}
\label{lemd10} If $\phi$ is in  $H^\infty(\D)$, then for every $\zeta \in \partial \B_d$, the function
$\phi \circ P_\zeta$ is in  ${\rm Mult}(H^2_d)$, and $\|\phi \circ P_\zeta\|_{\Mult(H^2_d)} \le \|\phi\|_\infty$.
If $\phi$ is outer, then
$\phi \circ P_\zeta$ is cyclic.
\end{lemma}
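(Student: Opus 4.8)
The plan is to verify the three assertions in order, the first two being soft consequences of general reproducing-kernel machinery and the third being the genuine content.

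First, for the multiplier statement: I would use the fact that $H^2_d$ is a normalized complete Pick space with $k(z,\lambda) = (1 - \langle z,\lambda\rangle)^{-1}$, so that a bounded analytic function $\phi$ on $\mathbb{D}$ composed with the contraction $P_\zeta: \mathbb{B}_d \to \mathbb{D}$ should pull back to a multiplier. Concretely, since $P_\zeta(z) = \langle z,\zeta\rangle$ maps $\mathbb{B}_d$ into $\mathbb{D}$ and $k_{H^2}(P_\zeta(z), P_\zeta(w)) = (1 - \langle z,\zeta\rangle\overline{\langle w,\zeta\rangle})^{-1}$, one checks that the map $f \mapsto f \circ P_\zeta$ is a co-isometry (or at least a contraction) from $H^2$ into $H^2_d$, because the kernel $k_{H^2_d}(z,w) - k_{H^2}(P_\zeta z, P_\zeta w)$ is positive semidefinite on $\mathbb{B}_d$ (expand both as geometric series in $\langle z,w\rangle$ and $\langle z,\zeta\rangle\overline{\langle w,\zeta\rangle}$ and compare term by term using $|\langle z,\zeta\rangle| \le |z|\,|\zeta| = |z|$ on each slice). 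Then for $\phi \in H^\infty(\mathbb{D}) = \Mult(H^2)$ with $\|\phi\|_\infty \le 1$, von Neumann's inequality (Lemma~\ref{lem:vNI} is the same circle of ideas) gives that $\phi \circ P_\zeta$ acts as a contractive multiplier of $H^2_d$: indeed $I - M_{\phi\circ P_\zeta} M_{\phi\circ P_\zeta}^*$ applied to kernels reduces to the corresponding statement on $\mathbb{D}$. Scaling gives $\|\phi\circ P_\zeta\|_{\Mult(H^2_d)} \le \|\phi\|_\infty$.

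The third assertion is the main point, and I expect it to be the only real obstacle. Suppose $\phi$ is outer; I must show $\phi \circ P_\zeta$ is cyclic in $H^2_d$, i.e.\ $[\phi\circ P_\zeta] = H^2_d$. By the Davidson--Ramsay--Shalit correspondence quoted in the excerpt, cyclicity of a multiplier $m$ is equivalent to $m\,\Mult(H^2_d)$ being weak-$*$ dense in $\Mult(H^2_d)$, equivalently to the multiplier $m$ generating a weak-$*$ dense ideal. Here is the approach: it suffices to produce multipliers $\psi_j$ of $H^2_d$ with $\psi_j \cdot (\phi\circ P_\zeta) \to 1$ in the weak-$*$ topology (equivalently, pointwise boundedly on $\mathbb{B}_d$). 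Since $\phi$ is outer, on the disk there exist $\psi_j \in H^\infty(\mathbb{D})$ with $\psi_j \phi \to 1$ boundedly and pointwise on $\mathbb{D}$ — this is precisely the statement that outer functions are cyclic in $H^2$, and one can take $\psi_j$ to be truncations of $1/\phi$ or, more cleanly, use that $1/\phi \in N^+(\mathbb{D})$ so $1/\phi = q_j/(1-z/j)^{\text{-type approximants}}$; the cleanest version is: $\psi_j := $ the outer function with $|\psi_j| = \min\{j, 1/|\phi|\}$ on $\partial\mathbb{D}$, which satisfies $\|\psi_j\|_\infty \le j$, $\|\psi_j\phi\|_\infty \le 1$, and $\psi_j \phi \to 1$ pointwise. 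Then $\psi_j \circ P_\zeta \in \Mult(H^2_d)$ by the first part, and $(\psi_j\circ P_\zeta)(\phi\circ P_\zeta) = (\psi_j\phi)\circ P_\zeta \to 1\circ P_\zeta = 1$ pointwise on $\mathbb{B}_d$, boundedly in multiplier norm. On bounded subsets of $\Mult(H^2_d)$ the weak-$*$ topology agrees with pointwise convergence (stated in the excerpt), so $(\psi_j\circ P_\zeta)(\phi\circ P_\zeta) \to 1$ weak-$*$, whence $\phi\circ P_\zeta$ generates a weak-$*$ dense ideal and is therefore cyclic.

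An alternative route for the third part, which avoids invoking the ideal correspondence, is to argue directly that $1 \in [\phi\circ P_\zeta]$: with $\psi_j$ as above, $(\psi_j\circ P_\zeta)(\phi\circ P_\zeta)$ is a bounded sequence in $H^2_d$ converging pointwise to $1$, hence converging weakly to $1$ in $H^2_d$; then a convexity argument (Mazur's lemma) produces convex combinations converging in norm, so $1 \in [\phi\circ P_\zeta]$, and since $1$ is cyclic in $H^2_d$ (the polynomials are dense), $[\phi\circ P_\zeta] = H^2_d$. I would present whichever of these two is shorter given what has already been set up; the pointwise-bounded-convergence input is the same either way, and that input — constructing the $\psi_j$ and checking $\|\psi_j\phi\|_\infty \le 1$ with $\psi_j\phi \to 1$ — is the one spot requiring a small honest computation.
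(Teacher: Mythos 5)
Your proposal is correct, but the cyclicity argument takes a genuinely different and heavier route than the paper's. For the multiplier bound you are in the same circle of ideas: the paper applies von Neumann's inequality to the contraction $M_{\langle\cdot,t\zeta\rangle}$ for $t<1$ and then lets $t\uparrow 1$ in the weak-$*$ topology, while you argue via kernel positivity. One small caveat there: justifying positivity of $k_{H^2_d}(z,w)-k_{H^2}(P_\zeta z,P_\zeta w)$ by ``term-by-term comparison using $|\langle z,\zeta\rangle|\le|z|$'' is not by itself a proof, since pointwise domination of kernels does not give positive semidefiniteness of the difference; the honest term-by-term argument writes $\langle z,w\rangle^n-\langle z,\zeta\rangle^n\overline{\langle w,\zeta\rangle}^n=\langle (I-P)z^{\otimes n},(I-P)w^{\otimes n}\rangle$ with $P$ the rank-one projection onto $\zeta^{\otimes n}$ (also, the embedding is an isometry, not a co-isometry). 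For cyclicity the paper is much more economical: since $\phi$ is outer there are polynomials $p_n$ with $\phi p_n\to 1$ in $H^2$, and because $f\mapsto f\circ P_\zeta$ is an isometry of $H^2$ into $H^2_d$, the functions $(\phi p_n)\circ P_\zeta$ converge to $1$ in the $H^2_d$ norm, so $1\in[\phi\circ P_\zeta]$ with no weak-$*$ or weak-convergence argument at all. Your two routes (bounded pointwise convergence of $(\psi_j\phi)\circ P_\zeta$ plus the Davidson--Ramsay--Shalit ideal correspondence, or weak convergence plus Mazur's lemma) are both valid, but they invoke machinery the isometry observation makes unnecessary. On the other hand, your construction of the $\psi_j$ as outer functions with boundary modulus $\min\{j,1/|\phi|\}$ is precisely what the paper uses later in Lemma \ref{lem:Kaplansky_outer}, where that stronger conclusion (contractive products tending weak-$*$ to $1$) is genuinely needed.
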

\begin{proof}
Multiplication by $\la w , \zeta \ra$ is a contraction, so by von Neumann's inequality, for every $ 0 < t < 1$,
the norm of multiplication by $\phi( \la w , t\zeta \ra)$ is
bounded by the norm of $\phi$ in $H^\infty$. As $t \uparrow 1$, these multiplication operators converge in
the weak-$*$ topology to multiplication by $\phi( \la w , \zeta \ra)$.

Now suppose $\phi$ is outer. Then there are polynomials $p_n \in \C[z]$ so that
$\phi (z) p_n(z) \to 1$ in $H^2$.
Therefore $\phi( \la w , \zeta \ra) p_n ( \la w, \zeta \ra) \to 1 $ in $H^2_d$,
so $\phi( \la w , \zeta \ra)$ is cyclic.
\end{proof}

As a consequence, we show that the Smirnov class in one variable embeds continuously into $N^+(H^2_d)$.

\begin{lem}
  \label{lem:IL_inclusion}
  For $\zeta \in \partial \mathbb{B}_d$, the map
  \begin{equation*}
    (N^+(\mathbb{D}),\I) \to (N^+(H^2_d),\I), \quad f \mapsto f \circ P_\zeta,
  \end{equation*}
  is continuous.
\end{lem}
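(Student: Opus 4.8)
The statement to prove is Lemma~\ref{lem:IL_inclusion}: for fixed $\zeta \in \partial \mathbb{B}_d$, the map $f \mapsto f \circ P_\zeta$ is continuous from $(N^+(\mathbb{D}),\I)$ to $(N^+(H^2_d),\I)$. The plan is to use the defining universal property of the inductive limit topology $\I$ on $N^+(\mathbb{D})$: a linear map out of $(N^+(\mathbb{D}),\I)$ into any locally convex space is continuous as soon as its restriction to each building block $\frac{1}{\phi} H^2$ (with $\phi$ outer in $H^\infty$) is continuous with respect to the norm $\|\cdot\|_{\frac{1}{\phi}H^2}$. So first I would fix an outer function $\phi \in H^\infty(\mathbb{D})$ and show that the composition operator $f \mapsto f \circ P_\zeta$ maps $\frac{1}{\phi} H^2$ continuously into some $\frac{1}{m} H^2_d$ with $m$ a cyclic multiplier of $H^2_d$; since the inclusion $\frac{1}{m} H^2_d \hookrightarrow (N^+(H^2_d),\I)$ is continuous by definition of $\I$ on the target, this will suffice.

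The natural candidate for the cyclic multiplier is $m = \phi \circ P_\zeta$. By Lemma~\ref{lemd10}, since $\phi$ is outer, $m = \phi \circ P_\zeta \in \Mult(H^2_d)$ is cyclic, with $\|m\|_{\Mult(H^2_d)} \le \|\phi\|_\infty$. Now given $f \in \frac{1}{\phi} H^2$, write $f = g/\phi$ with $g \in H^2$; then $f \circ P_\zeta = (g \circ P_\zeta)/(\phi \circ P_\zeta) = (g \circ P_\zeta)/m$. The key computation is that $g \mapsto g \circ P_\zeta$ is a bounded (in fact isometric up to a harmless constant, or simply contractive) map $H^2 \to H^2_d$: indeed if $g(z) = \sum_n \widehat{g}(n) z^n$ then $g \circ P_\zeta = \sum_n \widehat{g}(n) \langle z,\zeta\rangle^n$, and using $\|\langle z,\zeta\rangle^n\|_{H^2_d} = 1$ for $\zeta \in \partial\mathbb{B}_d$ together with orthogonality of distinct powers $\langle z,\zeta\rangle^n$ in $H^2_d$, one gets $\|g \circ P_\zeta\|_{H^2_d} = \|g\|_{H^2}$. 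Hence
\[
  \|f \circ P_\zeta\|_{\frac{1}{m}H^2_d} = \|m \cdot (f \circ P_\zeta)\|_{H^2_d} = \|(\phi \circ P_\zeta)(f \circ P_\zeta)\|_{H^2_d} = \|(\phi f) \circ P_\zeta\|_{H^2_d} = \|\phi f\|_{H^2} = \|f\|_{\frac{1}{\phi}H^2},
\]
so the restriction of $f \mapsto f \circ P_\zeta$ to $\frac{1}{\phi}H^2$ is a (contractive, even isometric) bounded linear map into $\frac{1}{m}H^2_d$.

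Composing with the continuous inclusion $\frac{1}{m}H^2_d \hookrightarrow (N^+(H^2_d),\I)$ shows the restriction of $f \mapsto f\circ P_\zeta$ to each $\frac{1}{\phi}H^2$ is continuous into the target; by the universal property of $\I$ on $N^+(\mathbb{D})$ (equivalently, by checking that preimages of absolutely convex $\I$-neighborhoods of $0$ meet each $\frac{1}{\phi}H^2$ in an open set), the map $f \mapsto f \circ P_\zeta$ is continuous on $(N^+(\mathbb{D}),\I)$, as claimed. I do not anticipate a serious obstacle here: the only point requiring care is the verification that $\langle z,\zeta\rangle^n \perp \langle z,\zeta\rangle^k$ in $H^2_d$ for $n \ne k$ and that each has unit norm—this is a direct consequence of the kernel identity $k(z,\lambda) = (1-\langle z,\lambda\rangle)^{-1}$ evaluated along the slice, or of the formula \eqref{eqA13} for $\omega_\alpha$. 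Everything else is bookkeeping with the definitions of the two inductive-limit topologies.
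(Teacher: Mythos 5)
Your proposal is correct and follows essentially the same route as the paper: both reduce to the universal property of the inductive limit topology, invoke Lemma~\ref{lemd10} to see that $\phi \circ P_\zeta$ is a cyclic multiplier of $H^2_d$, and use that $g \mapsto g \circ P_\zeta$ is an isometry $H^2 \to H^2_d$ to conclude that $f \mapsto f \circ P_\zeta$ maps $\frac{1}{\phi}H^2$ isometrically into $\frac{1}{\phi \circ P_\zeta}H^2_d$. The only difference is that you spell out the orthogonality computation for the powers $\langle z,\zeta\rangle^n$, which the paper leaves implicit.
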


\begin{proof}
  Let $I$ be the map in the statement of the lemma.
  Recall that by the universal property of the inductive limit topology,
  it suffices to show that for each cyclic multiplier
  $m$ of $H^2$, the restriction of $I$ to $\frac{1}{m} H^2$ is continuous.
  But $m \circ P_\zeta$ is a cyclic multiplier of $H^2_d$ by Lemma \ref{lemd10}, and the map
  \begin{equation*}
    \frac{1}{m} H^2 \to \frac{1}{m \circ P_\zeta} H^2_d, \quad f \mapsto f \circ P_\zeta,
  \end{equation*}
  is an isometry, as the embedding $H^2 \to H^2_d, f \mapsto f \circ P_\zeta$ is isometric.
  Since the inclusion $\frac{1}{m \circ P_\zeta} H^2_d \subset N^+(H^2_d)$
  is continuous, it follows that $I$ is continuous.
\end{proof}

In \cite[Theorem 2.1]{mcc90c}, it was shown that the dual spaces of $(N^+(\mathbb{D}),\mathcal{I})$ and $(N^+(\mathbb{D}),\rho)$
coincide. So as a corollary, we get:

\begin{cor}
  \label{cor:slice_continuous}
  Let $\Gamma \in (N^+(H^2_d),\mathcal{I})^*$. Then for each $\zeta \in \partial \mathbb{B}_d$,
  the slice functional $\Gamma_\zeta$ is continuous on $(N^+(\mathbb{D}),\rho)$.
\end{cor}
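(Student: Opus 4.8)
The plan is to deduce the statement directly from the lemma immediately preceding it together with the quoted one-variable duality result. First I would observe that, by Lemma~\ref{lem:IL_inclusion}, the slice map $I_\zeta \colon (N^+(\mathbb{D}),\I) \to (N^+(H^2_d),\I)$, $f \mapsto f \circ P_\zeta$, is continuous for each fixed $\zeta \in \partial \mathbb{B}_d$. Hence, given $\Gamma \in (N^+(H^2_d),\I)^*$, the composition $\Gamma \circ I_\zeta$ is a continuous linear functional on $(N^+(\mathbb{D}),\I)$; but by definition $\Gamma \circ I_\zeta = \Gamma_\zeta$, so $\Gamma_\zeta \in (N^+(\mathbb{D}),\I)^*$. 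Then I would invoke \cite[Theorem 2.1]{mcc90c}, which asserts that $(N^+(\mathbb{D}),\I)^* = (N^+(\mathbb{D}),\rho)^*$ as sets of functionals, to conclude that $\Gamma_\zeta$ is in fact continuous with respect to the metric $\rho$ on $N^+(\mathbb{D})$.

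There is essentially no obstacle here: the corollary is a two-line consequence of the lemma and the cited theorem, and the only thing to check is the bookkeeping identity $\Gamma_\zeta = \Gamma \circ I_\zeta$, which holds by the very definition $\Gamma_\zeta(f) = \Gamma(f \circ P_\zeta)$ given just before the corollary. If one wanted to be slightly more careful about domains, one should note that $f \circ P_\zeta$ genuinely lies in $N^+(H^2_d)$ whenever $f \in N^+(\mathbb{D})$ — this is built into Lemma~\ref{lem:IL_inclusion}, since that lemma already produces the map into $N^+(H^2_d)$ — so no extension argument is needed. The only subtlety worth flagging is that the continuity in Lemma~\ref{lem:IL_inclusion} is with respect to $\I$ on both sides, which is why the appeal to \cite[Theorem 2.1]{mcc90c} is essential: it is what lets us upgrade $\I$-continuity of $\Gamma_\zeta$ to $\rho$-continuity, the form in which we will want to use it (via Lemma~\ref{lem:slice_equicont_decay}) in the proof that (ii) implies (iv) in Theorem~\ref{thmH2}.

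I would write the proof as: ``Fix $\zeta \in \partial \mathbb{B}_d$. By Lemma~\ref{lem:IL_inclusion}, the map $f \mapsto f \circ P_\zeta$ is continuous from $(N^+(\mathbb{D}),\I)$ to $(N^+(H^2_d),\I)$, so $\Gamma_\zeta = \Gamma \circ (f \mapsto f \circ P_\zeta)$ is a continuous linear functional on $(N^+(\mathbb{D}),\I)$. By \cite[Theorem 2.1]{mcc90c}, the dual spaces of $(N^+(\mathbb{D}),\I)$ and $(N^+(\mathbb{D}),\rho)$ coincide, so $\Gamma_\zeta$ is continuous on $(N^+(\mathbb{D}),\rho)$.'' That is the whole argument.
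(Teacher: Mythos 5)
Your argument is exactly the paper's proof: compose $\Gamma$ with the continuous slice embedding of Lemma~\ref{lem:IL_inclusion} to get $\Gamma_\zeta \in (N^+(\mathbb{D}),\I)^*$, then invoke \cite[Theorem 2.1]{mcc90c} to pass from $\I$-continuity to $\rho$-continuity. Correct and the same in every respect.
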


\begin{proof}
  Lemma \ref{lem:IL_inclusion} shows that the slice functional $\Gamma_\zeta$
  is continuous on $(N^+(\mathbb{D}),\mathcal{I})$, hence it is continuous on $(N^+(\mathbb{D}),\rho)$
  by \cite[Theorem 2.1]{mcc90c}.
\end{proof}

The key step in proving (ii) $\Rightarrow$ (iv) in Theorem \ref{thmH2} is to show that
the slice functionals $\Gamma_\zeta$ are not only individually continuous, but equicontinuous on $(N^+(\mathbb{D}),\rho)$.
To this end, we require some more preliminary results.
 For cyclic multipliers that arise as in Lemma \ref{lemd10},
Lemma \ref{lem:cyclic_Kaplansky} can be refined to conclude that $(\varphi_n)$ are cyclic.

\begin{lem}
  \label{lem:Kaplansky_outer}
  Let $\psi \in H^\infty$ be outer, let $\zeta \in \partial \mathbb{B}_d$ and let $m = \psi \circ P_\zeta$.
  Then there exists a sequence $(\varphi_n)$
  of cyclic multipliers of $H^2_d$ so that
  \begin{enumerate}
    \item $(\varphi_n m)$ converges to $1$ in the weak-$*$ topology of $\Mult(H^2_d)$, and
    \item $\|\varphi_n m \|_{\Mult(H^2_d)} \le 1$ for all $n \in \mathbb{N}$.
  \end{enumerate}
\end{lem}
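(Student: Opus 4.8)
The plan is to bypass the general construction in Lemma~\ref{lem:cyclic_Kaplansky} and instead exploit the one-variable structure $m = \psi \circ P_\zeta$ directly: a classical construction produces explicit approximate inverses of the outer function $\psi$ inside $H^\infty(\mathbb{D})$ with good sup-norm control, and outer functions on $\mathbb{D}$ lift to \emph{cyclic} multipliers of $H^2_d$ by Lemma~\ref{lemd10}. After a harmless normalization we may assume $\psi(0) > 0$ (multiply $\psi$, and hence $m$, by a unimodular constant, and compensate by multiplying the $\varphi_n$ by its conjugate at the end).

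First I would carry out the one-variable step. For each $n \ge 1$, let $v_n \in H^\infty(\mathbb{D})$ be the outer function with boundary modulus $|v_n(e^{it})| = \min\{1/|\psi(e^{it})|,\, n\}$ and $v_n(0) > 0$; this is well defined because $\psi$ outer gives $\log|\psi| \in L^1(\partial\mathbb{D})$, and then $v_n \in H^\infty(\mathbb{D})$ with $\|v_n\|_\infty \le n$. The product $v_n\psi$ is again outer, with $|v_n\psi| = \min\{1,\,n|\psi|\} \le 1$ a.e., so $\|v_n\psi\|_\infty \le 1$. Writing $v_n\psi(z) = \exp\!\big(\tfrac{1}{2\pi}\int_0^{2\pi}\tfrac{e^{it}+z}{e^{it}-z}\log|v_n\psi(e^{it})|\,dt\big)$ (the Herglotz representation of the outer function $v_n\psi$, valid here because $v_n\psi(0) = v_n(0)\psi(0) > 0$), and noting that $\log|v_n\psi(e^{it})| = \min\{\log n + \log|\psi(e^{it})|,\,0\}$ is dominated in absolute value by $\bigl|\log|\psi|\bigr| \in L^1$ and tends to $0$ a.e., dominated convergence shows $v_n\psi \to 1$ pointwise on $\mathbb{D}$. (One also gets $v_n\psi \to 1$ in $H^2(\mathbb{D})$, since $\|v_n\psi\|_{H^2}^2 = \tfrac{1}{2\pi}\int \min\{1,n|\psi|\}^2 \to 1$ and the sequence is bounded and converges pointwise, but pointwise convergence on $\mathbb{D}$ is all we shall need.)

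Now I would transfer everything to $H^2_d$ via Lemma~\ref{lemd10}. Set $\varphi_n := v_n \circ P_\zeta$. Since $v_n$ is outer, $\varphi_n$ is a cyclic multiplier of $H^2_d$ by Lemma~\ref{lemd10}. Moreover $\varphi_n m = (v_n\circ P_\zeta)(\psi\circ P_\zeta) = (v_n\psi)\circ P_\zeta$, so applying Lemma~\ref{lemd10} to the bounded function $v_n\psi$ gives $\|\varphi_n m\|_{\Mult(H^2_d)} \le \|v_n\psi\|_\infty \le 1$, which is (2). For (1), the pointwise convergence $v_n\psi \to 1$ on $\mathbb{D}$ yields $\varphi_n m = (v_n\psi)\circ P_\zeta \to 1$ pointwise on $\mathbb{B}_d$; since $(\varphi_n m)$ is bounded in $\Mult(H^2_d)$ by (2), and on norm-bounded subsets of $\Mult(H^2_d)$ the weak-$*$ topology coincides with the topology of pointwise convergence on $\mathbb{B}_d$ (as recalled before Lemma~\ref{lem:cyclic_Kaplansky}), it follows that $\varphi_n m \to 1$ weak-$*$, which is (1). (If one insists on indexing the sequence by all of $\mathbb{N}$, re-index.)

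The only point that needs genuine care is the classical one-variable construction of $v_n$ and the verification that $v_n\psi \to 1$; this is exactly where the hypothesis ``$\psi$ outer'' (equivalently, $\log|\psi| \in L^1$) enters, and it is precisely what lets us do better than Lemma~\ref{lem:cyclic_Kaplansky} and obtain $\varphi_n$ that are themselves cyclic rather than merely multipliers. Everything after that is a direct pushforward through Lemma~\ref{lemd10} together with the description of the weak-$*$ topology on bounded subsets of $\Mult(H^2_d)$.
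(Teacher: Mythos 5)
Your proposal is correct and follows essentially the same route as the paper: the paper's $\psi_n$ is precisely your $v_n$ (the outer function with boundary modulus $\min(n,|\psi|^{-1})$, defined via the Herglotz integral), and both arguments conclude by pushing forward through Lemma~\ref{lemd10} and using that on bounded sets the weak-$*$ topology is pointwise convergence. The only cosmetic difference is your normalization $\psi(0)>0$ versus the paper's $\|\psi\|_\infty=1$, and your applying dominated convergence to $\log|v_n\psi|$ directly rather than to $\psi_n \to 1/\psi$; both are sound.
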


\begin{proof}
  We may assume without loss of generality that $\|\psi\|_\infty = 1$.
  Since $\psi$ is outer,
  \begin{equation*}
    \psi(z) = \exp\Big( \int_{0}^{2 \pi} \frac{e^{i \theta} + z}{e^{i \theta} - z} \log|\psi(e^{i\theta})| \frac{d \theta}{2 \pi} \Big)
  \end{equation*}
  for $z \in \mathbb{D}$. Define
  \begin{equation*}
    \psi_n(z) = \exp \Big(
    \int_{0}^{2 \pi} \frac{e^{i \theta} + z}{e^{i \theta} - z} \log ( \min( n, |\psi(e^{i \theta})|^{-1})) \frac{d \theta}{2 \pi} \Big).
  \end{equation*}
  Then $\psi_n \in H^\infty$ is outer, $\|\psi_n \psi \|_\infty \le 1$ and for each $z \in \mathbb{D}$, the dominated
  convergence theorem shows that $\lim_{n \to \infty} \psi_n(z) = \frac{1}{\psi(z)}$. Define $\varphi_n = \psi_n \circ P_\zeta$.
  Then each $\varphi_n$ is a cyclic multiplier by Lemma \ref{lemd10}.
  Since $(\varphi_n m) = (\psi_n \psi) \circ P_{\zeta}$, we find that
  $\|\varphi_n m\|_{\Mult(H^2_d)} \le 1$ and that $\varphi_n m$ converges to $1$ pointwise and therefore
  in the weak-$*$ topology of $\Mult(H^2_d)$.
\end{proof}

On bounded subsets of $\Mult(\mathcal{H})$, the weak-$*$ topology coincides with the topology of pointwise convergence.
The following simple lemma shows that it sometimes suffices to establish convergence at a single point.

\begin{lem}
  \label{lem:weak-star-Gleason}
  Let $\mathcal{H}$ be a normalized complete Pick space on $X$
  and let $(\varphi_n)$ be a sequence in the closed unit ball of $\Mult(\mathcal{H})$.
  If there exists a point $x_0 \in X$ so that $\lim_{n \to \infty} \varphi_n(x_0) = 1$,
  then $(\varphi_n)$ converges to $1$ in the weak-$*$ topology of $\Mult(\mathcal{H})$.
\end{lem}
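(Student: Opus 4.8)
The plan is to use the ``one-point to weak-$*$'' principle that in a normalized complete Pick space $\mathcal{H}$ on $X$, the functions in the closed unit ball of $\Mult(\mathcal{H})$ that vanish at the base point $x_0$ form a set on which the weak-$*$ topology is controlled by evaluation at the single point $x_0$. Since we are given a sequence $(\varphi_n)$ in the closed unit ball of $\Mult(\mathcal{H})$ with $\varphi_n(x_0) \to 1$, the idea is to reduce the general claim to this base-point case by a Schwarz-lemma / Gleason-type argument: the functions $\varphi_n$ are contractive multipliers, so they are in particular contractive as operators on $\mathcal{H}$, and evaluating at $x_0$ where the kernel is $k(\cdot,x_0) \equiv 1$ forces the ``mass'' of $\varphi_n$ to concentrate.

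Concretely, I would argue as follows. After translating so $b(x_0) = 0$, the reproducing kernel satisfies $k(x,x_0) = 1$ for all $x$, so the constant function $1$ is the normalized reproducing kernel at $x_0$. For a contractive multiplier $\varphi$, the adjoint $M_\varphi^*$ fixes kernel functions up to the conjugate of the value: $M_\varphi^* k_{x_0} = \overline{\varphi(x_0)}\, k_{x_0}$. Hence $\|M_{\varphi_n}^* 1 - 1\|_{\mathcal{H}}^2 = \|(\overline{\varphi_n(x_0)} - 1) 1\|^2 = |\varphi_n(x_0) - 1|^2 \to 0$, so $M_{\varphi_n}^* 1 \to 1$ in $\mathcal{H}$. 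Now take any $x \in X$ and apply the multiplier identity in the form $\varphi_n(x) = \langle \varphi_n \cdot 1, k_x \rangle = \langle 1, M_{\varphi_n}^* k_x \rangle$; but it is cleaner to write $\overline{\varphi_n(x)} = \langle M_{\varphi_n}^* k_x, 1 \rangle$ using $k_x$. Alternatively, and more robustly, observe $\varphi_n = M_{\varphi_n} 1$ and $\|\varphi_n\|_{\mathcal{H}} \le \|M_{\varphi_n}\| \le 1$, while $\langle \varphi_n, 1 \rangle_{\mathcal{H}} = \varphi_n(x_0) \to 1$; since the closed unit ball of $\mathcal{H}$ is a Hilbert-space ball, $\|\varphi_n\| \le 1$ together with $\langle \varphi_n, 1\rangle \to 1 = \|1\|^2$ forces $\|\varphi_n - 1\|_{\mathcal{H}} \to 0$ by the parallelogram law. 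Pointwise convergence $\varphi_n(x) \to 1$ for every $x \in X$ then follows from boundedness of point evaluations on $\mathcal{H}$, and since $(\varphi_n)$ is bounded in $\Mult(\mathcal{H})$, pointwise convergence implies weak-$*$ convergence (as recalled in the excerpt, on bounded subsets the weak-$*$ topology agrees with pointwise convergence).

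The main obstacle, and the only genuinely nontrivial point, is the passage from ``$\varphi_n(x_0) \to 1$'' to ``$\varphi_n \to 1$ in the norm of $\mathcal{H}$''. This is exactly where the complete Pick normalization is used: the fact that $k(\cdot,x_0) \equiv 1$ makes $1 = k_{x_0}/\|k_{x_0}\|$ the normalized reproducing kernel at $x_0$, so $\langle \varphi_n, 1 \rangle_{\mathcal{H}} = \langle M_{\varphi_n} 1, k_{x_0} \rangle / \|k_{x_0}\| = \varphi_n(x_0)/1 = \varphi_n(x_0)$. Combined with $\|\varphi_n\|_{\mathcal{H}} = \|M_{\varphi_n} 1\|_{\mathcal{H}} \le \|M_{\varphi_n}\|_{\Mult} \le 1 = \|1\|_{\mathcal{H}}$, the elementary Hilbert space fact
\begin{equation*}
  \|\varphi_n - 1\|_{\mathcal{H}}^2 = \|\varphi_n\|_{\mathcal{H}}^2 - 2 \Re \langle \varphi_n, 1 \rangle_{\mathcal{H}} + \|1\|_{\mathcal{H}}^2 \le 2 - 2 \Re \varphi_n(x_0) \to 0
\end{equation*}
finishes the argument. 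From there everything is routine: bounded point evaluations give $\varphi_n \to 1$ pointwise on $X$, and the boundedness of $(\varphi_n)$ in $\Mult(\mathcal{H})$ upgrades this to weak-$*$ convergence. I would write the proof in essentially this order, isolating the Hilbert-space inequality as the crux.
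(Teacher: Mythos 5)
Your proof is correct and is essentially the paper's argument: after renormalizing the kernel at $x_0$ so that $k_{x_0} \equiv 1$, one expands $\|\varphi_n - 1\|_{\mathcal{H}}^2 \le 2 - 2\Re \varphi_n(x_0) \to 0$ and then upgrades pointwise convergence to weak-$*$ convergence using boundedness in $\Mult(\mathcal{H})$. (The only cosmetic quibble is that the key identity is just the expansion of the inner product, not the parallelogram law.)
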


\begin{proof}
  By renormalizing the kernel (cf.\ \cite[Section 2.6]{ampi}), we may assume that the kernel is normalized at $x_0$.
  In this setting, we find that
  \begin{equation*}
    \|1 - \varphi_n\|_{\mathcal{H}}^2 = 1 + \|\varphi_n\|_{\mathcal{H}}^2 - 2 \Re \varphi_n(x_0)
    \le 2 - 2 \Re \varphi_n(x_0) \xrightarrow{n \to \infty} 0.
  \end{equation*}
  In particular, $\lim_{n \to \infty} \varphi_n(x) = 1$ for all $x \in X$.
\end{proof}

We will make use of infinite products of multipliers.
\begin{lem}
  \label{lem:infinite_product}
  Let $\mathcal{H}$ be a normalized complete Pick space on $X$ and let $(\psi_n)$ be a sequence in the closed unit ball
  of $\Mult(\mathcal{H})$. Assume that there exists $x_0 \in X$ so that $\sum_{n} | 1 - \psi_n(x_0)| < \infty$.
  \begin{enumerate}[label=\normalfont{(\alph*)}]
    \item The infinite product $\prod_{n=1}^\infty \psi_n$ converges in the weak-$*$ topology of $\Mult(\mathcal{H})$.
    \item If each $\psi_n$ is cyclic, then $\psi = \prod_{n=1}^\infty \psi_n$ is cyclic as well.
  \end{enumerate}
\end{lem}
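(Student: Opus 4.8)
The plan is to prove the two parts in turn, the real content lying in a Schwarz-type estimate needed for (a).

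For part (a), the key auxiliary fact I would establish is: \emph{for every $x \in X$ there is a constant $C_x \ge 1$, depending only on $x$, $x_0$ and $\mathcal H$, such that $|1 - \varphi(x)| \le C_x\, |1 - \varphi(x_0)|$ for every $\varphi$ in the closed unit ball of $\Mult(\mathcal H)$.} To prove this I would renormalize the kernel at $x_0$ (cf.\ \cite[Section 2.6]{ampi}), so that $k(x_0,x_0) = k(x,x_0) = k(x_0,x) = 1$ and $k(x,x) \ge 1$. Given a contractive multiplier $\varphi$ with $|\varphi(x_0)| < 1$ (the case $|\varphi(x_0)| = 1$ forces $\varphi$ to be constant, where the estimate is trivial), the M\"obius transform $g = \dfrac{\varphi - \varphi(x_0)}{1 - \overline{\varphi(x_0)}\varphi}$ is again a contractive multiplier (contractive multipliers are stable under post-composition with disk automorphisms, since $\|M_\varphi\| \le 1$ iff $(1 - \varphi(x)\overline{\varphi(y)})k(x,y) \succeq 0$), and $g(x_0) = 0$. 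Positivity of the $2\times 2$ submatrix of the kernel $(1 - g(x)\overline{g(y)})k(x,y)$ at $\{x_0, x\}$ then gives $|g(x)|^2 \le 1 - 1/k(x,x) =: \kappa_x^2 < 1$. Undoing the M\"obius transform, $|\varphi(x) - \varphi(x_0)| \le \kappa_x |1 - \overline{\varphi(x_0)}\varphi(x)| \le \kappa_x\big(|1 - \varphi(x)| + |1 - \varphi(x_0)|\big)$, and solving for $|1 - \varphi(x)|$ gives $C_x = (1+\kappa_x)/(1-\kappa_x)$. Applying this to $\varphi = \psi_n$ yields $\sum_n |1 - \psi_n(x)| \le C_x \sum_n |1 - \psi_n(x_0)| < \infty$ for every $x \in X$. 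Since the partial products $\Psi_N := \prod_{n=1}^N \psi_n$ are contractive multipliers with $|\Psi_N(x)| \le 1$, we get $|\Psi_{N+1}(x) - \Psi_N(x)| = |\Psi_N(x)|\,|1 - \psi_{N+1}(x)| \le |1 - \psi_{N+1}(x)|$, so $(\Psi_N(x))$ converges for every $x$. As the closed unit ball of $\Mult(\mathcal H)$ is weak-$*$ compact and carries the topology of pointwise convergence on $X$ there, a bounded pointwise-convergent sequence in it has a unique weak-$*$ cluster point; hence $(\Psi_N)$ converges weak-$*$ to a contractive multiplier $\psi$ with $\psi(x) = \prod_n \psi_n(x)$.

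For part (b), I would first record that a product of finitely many cyclic multipliers is cyclic: if $m_1, m_2 \in \Mult(\mathcal H)$ are cyclic, choose $r_k$ with $m_2 r_k \to 1$ in $\mathcal H$; then for every $s \in \Mult(\mathcal H)$ one has $m_1 m_2 r_k s = (m_1 s)(m_2 r_k) \to m_1 s$ in $\mathcal H$, so $m_1 \Mult(\mathcal H) \subseteq [m_1 m_2]$ and therefore $[m_1 m_2] = [m_1] = \mathcal H$. In particular each $\Psi_N$ is cyclic. By part (a) applied to the tail $(\psi_n)_{n > N}$, the product $R_N := \prod_{n > N} \psi_n$ is a contractive multiplier, and comparing pointwise values shows $\psi = \Psi_N R_N$ as multipliers. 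Renormalizing the kernel at $x_0$, $\|1 - R_N\|_{\mathcal H}^2 = 1 + \|R_N\|_{\mathcal H}^2 - 2\Re R_N(x_0) \le 2|1 - R_N(x_0)| \to 0$, so $R_N \to 1$ in $\mathcal H$. Since $\Psi_N$ is cyclic I may choose $q_k \in \Mult(\mathcal H)$ with $\Psi_N q_k \to 1$ in $\mathcal H$; applying the bounded multiplication operator by $R_N$ gives $\psi q_k = R_N(\Psi_N q_k) \to R_N$ in $\mathcal H$, so $R_N \in [\psi]$. As $[\psi]$ is closed and $R_N \to 1$, we get $1 \in [\psi]$, and since $1$ is a cyclic vector for $\Mult(\mathcal H)$ in any normalized complete Pick space (the multipliers are dense in $\mathcal H$), $[\psi]$ being a closed $\Mult(\mathcal H)$-invariant subspace containing $1$ forces $[\psi] = \mathcal H$, i.e.\ $\psi$ is cyclic.

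The hard part is the Schwarz-type estimate in (a): the naive reproducing-kernel bound only gives $|1 - \psi_n(x)| \lesssim |1 - \psi_n(x_0)|^{1/2}$, which is too weak to preserve $\ell^1$-summability of the defects, so one genuinely has to use the complete Pick (Pick-matrix) structure to upgrade to the linear bound $|1-\psi_n(x)| \le C_x|1-\psi_n(x_0)|$. Once that is in hand, both parts follow from weak-$*$ compactness of the multiplier ball and boundedness of multiplication operators on $\mathcal H$.
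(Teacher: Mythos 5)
Your proof is correct, but part (a) takes a genuinely different route from the paper's. You prove a pointwise Schwarz--Pick lemma for contractive multipliers, $|1-\varphi(x)| \le C_x\,|1-\varphi(x_0)|$, transfer the summability $\sum_n |1-\psi_n(x)| < \infty$ to every point $x$, and then conclude convergence of the partial products factor by factor. The paper never transfers summability at all: it applies the cruder Hilbert-space estimate $\|1-\varphi\|_{\mathcal{H}}^2 \le 2 - 2\Re\varphi(x_0)$ (Lemma \ref{lem:weak-star-Gleason}) not to the individual factors but to the \emph{tail products} $\Psi_{N,M} = \prod_{n=N}^M \psi_n$, for which the hypothesis only needs to deliver $\Psi_{N,M}(x_0)\to 1$ as $M\ge N\to\infty$; combined with $|\Psi_{1,N-1}(x)|\le 1$ this already makes the partial products pointwise Cauchy. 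So your closing remark --- that the square-root-type bound is ``too weak'' and one genuinely needs the linear bound --- is true only if one insists on controlling the defects factor by factor; the paper's tail-product trick shows the weaker bound suffices. Your Schwarz--Pick estimate is nevertheless correct (the M\"obius composition preserves contractive multipliers by the Schur-product argument you indicate, and the $2\times 2$ Pick matrix gives $|g(x)|^2 \le 1 - 1/k(x,x)$ after renormalizing at $x_0$, which is legitimate since complete Pick kernels do not vanish), and it buys a quantitatively stronger conclusion, at the cost of more work; note also that it uses only positivity of $2\times 2$ Pick matrices, not the complete Pick property per se. Your part (b) is essentially the paper's argument, except that you peel off the cyclic prefactor $\Psi_N$ in one step (after checking finite products of cyclics are cyclic) and get norm convergence $R_N \to 1$ in $\mathcal{H}$, where the paper peels off one factor at a time and settles for weak convergence of the tails; both correctly reduce to $1 \in [\psi]$.
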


\begin{proof}
  (a)
  For $M \ge N$, let $\Psi_{N,M} = \prod_{n=N}^M \psi_n$, which belongs to the closed unit ball of $\Mult(\mathcal{H})$.
  The assumption implies that $\Psi_{N,M}(x_0)$ converges to $1$ as $M \ge N \to \infty$,
  hence $\Psi_{N,M}(x)$ converges to $1$ as $M \ge N \to \infty$ for all $x \in X$ by Lemma \ref{lem:weak-star-Gleason}.
  From this, we deduce that the partial products $\prod_{n=1}^N \psi_n$ converge pointwise on $X$ and hence
  in the weak-$*$ topology of $\Mult(\mathcal{H})$.

(b) Since $\psi_1$ is cyclic, there is a sequence $(f_k)$ in $\mathcal{H}$
so that $\psi_1 f_k \to 1$. Therefore $\prod_{n=2}^\infty \psi_n \in [ \psi]$.
Iterating, we get for every $N$ that $\prod_{n=N}^\infty \psi_n \in [ \psi]$.
As $N \to \infty$,  $\prod_{n=N}^\infty \psi_n$ converges weakly to $1$ in  $\mathcal H$ by part (a),
so $\psi$ is cyclic.
\end{proof}

\begin{rem}
  It is in general not true that non-zero weak-$*$ limits of cyclic multipliers are cyclic. For instance,
  if $f \in H^\infty$ is a singular inner function, then $f_r$ is outer for each $0 < r < 1$ and $f_r$ converges
  to $f$ in the weak-$*$ topology of $H^\infty$ as $r \to 1$.
\end{rem}

The following is the key lemma needed to prove equicontinuity of the slice functionals.

\begin{lem}
  \label{lem:common_cyclic_factor}
  Let $f \in N^+(\mathbb{D})$. Then the map
  \begin{equation*}
    \partial \mathbb{B}_d \to (N^+(H^2_d),\mathcal{I}), \quad \zeta \mapsto f \circ P_\zeta,
  \end{equation*}
  is continuous.
\end{lem}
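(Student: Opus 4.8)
Throughout write $d$ for the dimension and recall $P_\zeta(z)=\langle z,\zeta\rangle$. Since $\partial\mathbb B_d$ is metrizable, continuity of a map into the locally convex space $(N^+(H^2_d),\mathcal I)$ is the same as sequential continuity, so the plan is to fix a sequence $\zeta_n\to\zeta_0$ in $\partial\mathbb B_d$ and prove $f\circ P_{\zeta_n}\to f\circ P_{\zeta_0}$ in $(N^+(H^2_d),\mathcal I)$. Using \eqref{eqa4}, write $f=q/m$ with $q\in H^\infty(\mathbb D)$ and $m$ outer; after multiplying $q$ and $m$ by a suitable unimodular constant and rescaling, we may assume $\|m\|_\infty\le 1$ and $m(0)>0$. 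By Lemma~\ref{lemd10}, each $m\circ P_{\zeta_n}$ ($n\ge 0$, with $\zeta_0$ playing the role of index $0$) is a cyclic multiplier of $H^2_d$ and $q\circ P_{\zeta_n}\in\Mult(H^2_d)$.

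The technical core is to manufacture a single cyclic multiplier $M$ of $H^2_d$ that is a common multiple of all the $m\circ P_{\zeta_n}$. For each $n$, Lemma~\ref{lem:Kaplansky_outer} produces a cyclic multiplier $\varphi$ with $\|\varphi\,(m\circ P_{\zeta_n})\|_{\Mult(H^2_d)}\le 1$ and $\varphi\,(m\circ P_{\zeta_n})(0)$ as close to $1$ as we wish; choose $\varphi_n$ so that $\chi_n:=\varphi_n\,(m\circ P_{\zeta_n})$ satisfies $|1-\chi_n(0)|\le 2^{-n}$. Each $\chi_n$ is a contractive cyclic multiplier (a product of the cyclic multipliers $\varphi_n$ and $m\circ P_{\zeta_n}$), and $\sum_n|1-\chi_n(0)|<\infty$, so by Lemma~\ref{lem:infinite_product} the product $M:=\prod_{n\ge 0}\chi_n$ converges (weak-$*$) to a contractive cyclic multiplier. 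Since $\chi_n=(m\circ P_{\zeta_n})\varphi_n$, for every $n$ we have $M/(m\circ P_{\zeta_n})=\varphi_n\prod_{k\neq n}\chi_k\in\Mult(H^2_d)$, hence $M\,(f\circ P_{\zeta_n})=(q\circ P_{\zeta_n})\,\varphi_n\prod_{k\neq n}\chi_k\in\Mult(H^2_d)\subset H^2_d$. Thus all of the functions $f\circ P_{\zeta_n}$ and the limit $f\circ P_{\zeta_0}$ lie in the single Banach space $\tfrac1M H^2_d$. \emph{The main obstacle is to carry out this construction so that in addition $\sup_n\|M/(m\circ P_{\zeta_n})\|_{\Mult(H^2_d)}<\infty$}: the naive corrections $\varphi_n$ have $H^\infty$-norm tending to infinity when $m$ is not invertible, so one must insert compensating cyclic factors and choose the base point in Lemma~\ref{lem:infinite_product} carefully (this is where $m(0)>0$ enters). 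With such uniform control in hand, $\{f\circ P_{\zeta_n}\}$ is a bounded subset of $\tfrac1M H^2_d$, with $\|M\,(f\circ P_{\zeta_n})\|_{\Mult(H^2_d)}\lesssim\|q\|_\infty$.

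Finally, since $\tfrac1M H^2_d\hookrightarrow(N^+(H^2_d),\mathcal I)$ is continuous (it is one of the defining steps), it suffices to prove $\|M\,(f\circ P_{\zeta_n})-M\,(f\circ P_{\zeta_0})\|_{H^2_d}\to 0$. Two ingredients combine to give this. First, $f\circ P_{\zeta_n}(w)=f(\langle w,\zeta_n\rangle)\to f(\langle w,\zeta_0\rangle)$ for every $w\in\mathbb B_d$, and the family $\{f\circ P_{\zeta_n}\}$ is locally uniformly bounded on $\mathbb B_d$ (e.g.\ by \eqref{eqaa6}, as $\vertiii{f\circ P_{\zeta_n}}\le\vertiii f$), so $M\,(f\circ P_{\zeta_n})\to M\,(f\circ P_{\zeta_0})$ locally uniformly and hence each fixed homogeneous component converges in $H^2_d$. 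Second, $\vertiii{f\circ P_{\zeta_n}-f\circ P_{\zeta_0}}\to 0$: writing $f\circ P_{\zeta_n}=(f\circ P_{\zeta_0})\circ U_n$ with $U_n\in\mathcal U(d)$, $U_n\to I$, this follows because the functional $\vertiii{\cdot}$ is invariant under such rotations (substituting $\zeta\mapsto U_n\zeta$ in \eqref{eqa41}) and the dilations are $\rho$-dense in $\npu$, so rotation is $\rho$-continuous on $\npu$. A uniform tail estimate — got from the uniform multiplier bound of the previous paragraph together with the slice-wise Smirnov coefficient bound \eqref{eqaa4} applied to $\vertiii{M\,(f\circ P_{\zeta_n}-f\circ P_{\zeta_0})}\le(\|M\|_\infty+1)\vertiii{f\circ P_{\zeta_n}-f\circ P_{\zeta_0}}\to 0$, noting also that the tail of the fixed element $M\,(f\circ P_{\zeta_0})$ is controlled — upgrades componentwise convergence to norm convergence in $\tfrac1M H^2_d$, completing the proof.
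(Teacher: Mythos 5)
Your skeleton matches the paper's: reduce to sequences, build one cyclic multiplier $M$ as an infinite product of contractive cyclic correction factors via Lemma \ref{lem:Kaplansky_outer} and Lemma \ref{lem:infinite_product}, and then prove convergence inside the single Banach space $\frac{1}{M}H^2_d$. But the proposal leaves the two genuinely hard steps unproven. First, you explicitly flag that the naive $\varphi_n$ from Lemma \ref{lem:Kaplansky_outer} have unbounded multiplier norm and that ``one must insert compensating cyclic factors,'' and then simply proceed ``with such uniform control in hand.'' That is the crux of the lemma. The paper resolves it by composing with unitaries $U_n \to I$ (so all denominators are small perturbations $m \circ U_n$ of one fixed $m = u \circ P_\zeta$), passing to a subsequence so that $\sum_n 2^{4n}\|\varphi_n\|^2_{\Mult(H^2_d)}\|m - m\circ U_n\|^2_{H^2_d} \le 1$, invoking \cite[Theorem 1.1]{ahmrfac} to write $2^{2n}\|\varphi_n\|_{\Mult(H^2_d)}(m - m\circ U_n) = \psi_n/(1-\psi)$ with a single common $\psi$, and then mollifying with $\frac{1-\psi}{1-r_n\psi}$ and Lemma \ref{lem:vNI} to obtain the contractive cyclic factors $t_n\eta_n$. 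None of that machinery appears in your argument, and without it the asserted bound $\|M(f\circ P_{\zeta_n})\|_{\Mult(H^2_d)}\lesssim\|q\|_\infty$ is unjustified: it hides the factor $\|\varphi_n\|_{\Mult(H^2_d)}$, which is exactly what blows up.

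Second, your final step would fail even if the uniform bound were granted. Boundedness of $\{M(f\circ P_{\zeta_n})\}$ in $H^2_d$ together with locally uniform (hence componentwise) convergence yields only weak convergence in $H^2_d$, not norm convergence; and \eqref{eqaa4} controls the \emph{growth} of Taylor coefficients of a function with small $\vertiii{\cdot}$, not the equi-smallness of $H^2_d$-tails, so it cannot supply the ``uniform tail estimate'' you invoke. The paper sidesteps this entirely: it writes $F = G/m$ with $G \in H^2_d$ (rather than $q \in H^\infty$), uses the identity $m\,(m\circ U_n)(F\circ U_n - F) = m\,(G\circ U_n) - (m\circ U_n)\,G$, notes that $M$ contains the factor $m^2\eta_n$, and concludes $\|M(F\circ U_n - F)\|_{H^2_d} \le 2\|m(G\circ U_n) - (m\circ U_n)G\|_{H^2_d} \to 0$ directly from norm continuity of $G \mapsto G\circ U_n$ in $H^2_d$ and strong operator convergence of multiplication by $m\circ U_n$ to multiplication by $m$. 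You should adopt that route (or produce a genuine equi-integrability argument) for the last step; your observation that $\vertiii{f\circ P_{\zeta_n}-f\circ P_{\zeta_0}}\to 0$ is correct but does not by itself yield $H^2_d$-norm convergence.
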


\begin{proof}
  Let $(\zeta_n)$ be a sequence in $\partial \mathbb{B}_d$ that tends to $\zeta$.
  It suffices to show that a subsequence of $(f \circ P_{\zeta_n})$ converges to $f \circ P_{\zeta}$ in $(N^+(H^2_d),\mathcal{I})$.
  Let $F = f \circ P_{\zeta} \in N^+(H^2_d)$.
  It is elementary to construct a sequence $(U_n)$ of $d \times d$ unitaries tending to $I$ so that $U_n \zeta_n = \zeta$ for all $n$.

  Then $P_{\zeta} \circ U_n = P_{\zeta_n}$, so we have to show that a subsequence of $(F \circ U_n)$ tends to $F$ in $(N^+(H^2_d),\mathcal{I})$.

  We will construct a cyclic multiplier $M$ so that a subsequence of $(F \circ U_n)$ tends to $F$ in $\frac{1}{M} H^2_d$.
  Continuity of the inclusion $\frac{1}{M}H^2_d \subset (N^+(H^2_d),\mathcal{I})$ then shows that the subsequence also
  converges to $F$ in $(N^+(H^2_d),\mathcal{I})$, as desired.
  To construct $M$, write $f = \frac{g}{u}$ for some $g \in H^2$ and some outer function $u \in H^\infty$.
  Let $G = g \circ P_{\zeta}$ and $m = u \circ P_{\zeta}$, so $F = \frac{G}{m}$.
  The multiplier $M$ will essentially be an infinite product of multipliers
  of the form $m \circ U_n$, suitably corrected to ensure convergence.

  First, by Lemma \ref{lem:Kaplansky_outer}, there exists a sequence $(\varphi_n)$
  of cyclic multipliers of $H^2_d$ so that $\|\varphi_n m\|_{\Mult(H^2_d)} \le 1$ for all $n$
  and so that $| 1 - \varphi_n(0) m(0)| \le 2^{-n}$.
  Next, since $m \circ U_n$ converges to $m$ in the norm of $H^2_d$, we may pass to a subsequence of $(U_n)$ to achieve that
  \begin{equation*}
    \sum_{n=1}^\infty 2^{4 n} \|\varphi_n\|_{\Mult(H^2_d)}^2 \| m - m \circ U_n\|^2_{H^2_d} \le 1.
  \end{equation*}
  Then \cite[Theorem 1.1]{ahmrfac} yields a sequence $(\psi_n)$ in $\Mult(H^2_d)$ forming
  a contractive column multiplier and a contractive multiplier $\psi \in \Mult(H^2_d)$ with $\psi(0) = 0$
  so that
  \begin{equation*}
    2^{2 n} \|\varphi_n\|_{\Mult(H^2_d)} ( m - m \circ U_n) = \frac{\psi_n}{1 - \psi};
  \end{equation*}
  whence
  \begin{equation}
    \label{eqn:common_cyclic}
    \| ( 1- \psi) ( m - m \circ U_n) \|_{\Mult(H^2_d)} \le 2^{-2 n} \|\varphi_n\|_{\Mult(H^2_d)}^{-1}.
  \end{equation}
  Let $r_n = 1 - 2^{-n}$ and let
  \begin{equation*}
    \eta_n = \varphi_n (m \circ U_n) \frac{1 - \psi}{1 - r_n \psi},
  \end{equation*}
  which is a product of cyclic multipliers and hence cyclic
  (cyclicity of $1- \psi$ was shown in \cite[Lemma 2.3]{ahmrSm}).
  Then
  \begin{equation*}
    \| \eta_n\|_{\Mult(H^2_d)}
    \le \Big\| \varphi_n (m \circ U_n - m) \frac{1 - \psi}{1 - r_n \psi} \Big\|_{\Mult(H^2_d)}
    + \Big\|\varphi_n m \frac{1 - \psi}{1 - r_n \psi} \Big\|_{\Mult(H^2_d)}.
  \end{equation*}
  We estimate the first summand using \eqref{eqn:common_cyclic} and the second
  summand using Lemma \ref{lem:vNI} combined with the bound $\|\varphi_n m\|_{\Mult(H^2_d)} \le 1$ to find that
  \begin{equation*}
    \| \eta_n\|_{\Mult(H^2_d)} \le 2^{-2n} \frac{1}{1 - r_n} + \frac{2}{1 + r_n} \le 1+ 2^{-n+1}.
  \end{equation*}
  Let $t_n = 1 - 2^{-n+1}$. Then $\|t_n \eta_n\|_{\Mult(H^2_d)} \le 1$ and $t_n \eta_n$ is cyclic for $n \ge 2$.
  Moreover, since $\psi(0) = 0$, we have $\eta_n(0) = \varphi_n(0) m(0)$.
  Since $|1 - \varphi_n(0) m(0)| \le 2^{-n}$, it follows that $\sum_{n=2}^\infty | 1 - t_n \eta_n(0)| < \infty$.
  Thus, Lemma \ref{lem:infinite_product} shows that the infinite product of the $t_n \eta_n$ converges
  to a cyclic multiplier. Let
  \begin{equation*}
    M = m^2 \prod_{n=2}^\infty t_n \eta_n.
  \end{equation*}
  Then $M$ is a cyclic multiplier.

  We finish the proof by showing that $\lim_{n \to \infty} \|M (F \circ U_n - F)\|_{H^2_d} = 0$.
  Indeed, since $F = \frac{G}{m}$, we have
  \begin{align*}
    \| M (F \circ U_n - F)\|_{H^2_d} &\le \|m^2 \eta_n (F \circ U_n - F)\|_{H^2_d} \\
                                     &\le \Big\| \varphi_n m \frac{1-\psi}{1 - r_n \psi} \Big\|_{\Mult(H^2_d)}
                                     \| m (m \circ U_n) (F \circ U_n - F) \|_{H^2_d} \\
                                     &\le 2 \|m (G \circ U_n) - (m \circ U_n) G\|_{H^2_d},
  \end{align*}
  where in the last estimate, we used again that $\|\varphi_n m\|_{\Mult(H^2_d)} \le 1$
  and Lemma \ref{lem:vNI}. Since $G \circ U_n$ tends to $G$ in $H^2_d$ and the sequence of multipliers
  $(m \circ U_n)$ tends to $m$ in the strong operator topology,
  we conclude that $\lim_{n \to \infty} \|M (F \circ U_n - F) \|_{H^2_d} = 0$, as desired.
\end{proof}

We are ready to prove equicontinuity of the slice functionals $\Gamma_\zeta$ defined
by $\Gamma_\zeta(f) = \Gamma(f \circ P_\zeta)$.

\begin{lem}
  \label{lem:slice_equicont}
  If $\Gamma \in (N^+(H^2_d),\mathcal{I})^*$, then the functionals $\{\Gamma_{\zeta}: \zeta \in \partial \mathbb{B}_d\}$
  are equicontinuous on $(N^+(\D), \rho)$.
\end{lem}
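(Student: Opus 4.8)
The plan is to upgrade the individual continuity of the slice functionals to equicontinuity by means of the uniform boundedness principle. Recall that $(N^+(\D),\rho)$ is an F-space — it carries a complete translation invariant metric — hence a Baire space, and that by Corollary~\ref{cor:slice_continuous} each slice functional $\Gamma_\zeta$ is a \emph{continuous} linear functional on $(N^+(\D),\rho)$. By the form of the uniform boundedness principle valid for F-spaces, it therefore suffices to show that the family $\{\Gamma_\zeta : \zeta \in \partial\mathbb{B}_d\}$ is pointwise bounded, i.e.\ that for each fixed $f \in N^+(\D)$ the set $\{\Gamma_\zeta(f) : \zeta \in \partial \mathbb{B}_d\}$ is a bounded subset of $\mathbb{C}$.

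To verify pointwise boundedness, fix $f \in N^+(\D)$. By Lemma~\ref{lem:IL_inclusion} the function $f \circ P_\zeta$ lies in $N^+(H^2_d)$ for every $\zeta \in \partial\mathbb{B}_d$, so $\Gamma_\zeta(f) = \Gamma(f \circ P_\zeta)$ is well defined. Moreover, by Lemma~\ref{lem:common_cyclic_factor}, the map $\partial\mathbb{B}_d \to (N^+(H^2_d),\I)$, $\zeta \mapsto f \circ P_\zeta$, is continuous; composing it with the continuous linear functional $\Gamma$ on $(N^+(H^2_d),\I)$ shows that $\zeta \mapsto \Gamma_\zeta(f)$ is a continuous scalar-valued function on $\partial\mathbb{B}_d$. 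Since $\partial\mathbb{B}_d$ is compact, its range is bounded, which is exactly the pointwise boundedness we need.

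Applying the uniform boundedness principle for F-spaces (see, e.g.\ \cite[Theorem 2.8]{Rudin91}) to the family $\{\Gamma_\zeta : \zeta \in \partial\mathbb{B}_d\}$ of continuous linear functionals on $(N^+(\D),\rho)$ then yields equicontinuity, completing the proof. I do not expect a genuine obstacle at this stage: the substantive content has already been isolated in Lemma~\ref{lem:common_cyclic_factor}, whose proof constructs a single cyclic multiplier $M$ absorbing all the rotated copies of $F$ at once; once that continuity statement is in hand, the present lemma is essentially a packaging of Banach--Steinhaus together with compactness of the sphere, and the only points requiring a line of care are that $(N^+(\D),\rho)$ is a complete metric (hence Baire) TVS and that the maps $\Gamma_\zeta$ are genuinely defined and continuous on all of $N^+(\D)$, both of which are supplied by the earlier results.
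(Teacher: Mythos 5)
Your proof is correct and follows the paper's argument exactly: reduce to pointwise boundedness via the uniform boundedness principle for F-spaces, then obtain pointwise boundedness from Lemma~\ref{lem:common_cyclic_factor} together with compactness of $\partial\mathbb{B}_d$. No substantive difference from the paper's proof.
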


\begin{proof}
  By Corollary \ref{cor:slice_continuous}, each $\Gamma_\zeta$ is continuous on $(N^+(\mathbb{D}),\rho)$.
  By the uniform boundedness principle for F-spaces (see, e.g.\ \cite[Theorem 2.6]{Rudin91}), it therefore suffices to show that the functionals $\Gamma_\zeta$
  are pointwise bounded.
  If $f \in N^+(\mathbb{D})$, then $\{f \circ P_{\zeta}: \zeta \in \partial \mathbb{B}_d\}$
  is a compact set in $(N^+(H^2_d),\mathcal{I})$ by Lemma \ref{lem:common_cyclic_factor} and compactness of $\partial \mathbb{B}_d$.
  Hence,
  \begin{equation*}
    \sup_{\zeta \in \partial \mathbb{B}_d} |\Gamma_{\zeta} (f)| =
    \sup_{\zeta \in \partial \mathbb{B}_d} |\Gamma({f \circ P_{\zeta}})| < \infty,
  \end{equation*}
  establishing pointwise boundedness.
\end{proof}

Finally, we can finish the proof of Theorem \ref{thmH2}.
\begin{proof}[Proof of (ii) $\Rightarrow$ (iv) in Theorem \ref{thmH2}]
If $\Gamma = \langle \cdot,h \rangle \in (N^+(H^2_d),\mathcal{I})^*$,
then the slice functionals $\{\Gamma_\zeta: \zeta \in \partial \mathbb{B}_d \}$ are equicontinuous on $(N^+(\mathbb{D}), \rho)$
by Lemma \ref{lem:slice_equicont}. Thus, an application of Lemma \ref{lem:slice_equicont_decay}
shows that the homogeneous components $h_n$ of $h$ satisfy the decay condition $\|h_n\|_\infty = O(e^{-c \sqrt{n}})$ for some $c> 0$.
From Corollary \ref{cor:decay_equiv}, we get \eqref{eqAB5}, for some $c > 0$.
\end{proof}

\section{\texorpdfstring{$N^+(\htd) $ vs. $N_u^+$}{Comparing the two Smirnov classes}}
\label{secf}

The goal of this section is to construct functions in the ball algebra $A(\mathbb{B}_d)$ (and hence in $N_u^+$)
that
do not belong to $N^+(H^2_d)$. Let $d \in \bN$ and let
\begin{equation*}
  \tau: \ol{\bB_d} \to \ol{\bD}, \quad z \mapsto d^{d/2} z_1 z_2 \ldots z_d.
\end{equation*}
It follows from the inequality of arithmetic and geometric means that $\tau$ maps $\ol{\bB_d}$
onto $\ol{\bD}$ and $\bB_d$ onto $\bD$. Let
\begin{equation*}
  a_n = \| \tau^n\|^{-2}_{H^2_d}
\end{equation*}
and let $\cK_d$ be the reproducing kernel Hilbert space on $\bD$ with reproducing kernel
\begin{equation*}
  K(z,w) = \sum_{n=0}^\infty a_n (z \ol{w})^n.
\end{equation*}
Then $\cK_d$ embeds isometrically into $H^2_d$. For a proof of the following lemma,
see \cite[Lemma 7.1]{AHM+20a}. The weighted Dirichlet space $\mathcal{D}_{1/2}$ is the reproducing
kernel Hilbert space of holomorphic functions on $\mathbb{D}$ with reproducing kernel $(1 - z \overline{w})^{-1/2}$.

\begin{lemma}
  \label{lem:embed}
  The map
  \begin{equation*}
    V: \cK_d \to H^2_d, \quad f \mapsto f \circ \tau,
  \end{equation*}
  is an isometry whose range
  is the space of all functions in $H^2_d$ that are power series in $z_1 z_2 \ldots z_d$.

  Moreover, $\mathcal{K}_2$ is the weighted Dirichlet space $\mathcal{D}_{1/2}$, and
  $\mathcal{K}_d \subset A(\mathbb{D})$ for $d \ge 4$.
\end{lemma}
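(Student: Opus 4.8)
The plan is to reduce the whole lemma to computations with the monomial basis, handling the three assertions (the isometry and its range, the identification $\mathcal{K}_2 = \mathcal{D}_{1/2}$, and the inclusion $\mathcal{K}_d \subset A(\mathbb{D})$ for $d\ge 4$) in turn. First I would establish the isometry. The monomials $z^n$, $n\ge 0$, form an orthogonal basis of $\cK_d$ with $\|z^n\|_{\cK_d}^2 = 1/a_n = \|\tau^n\|_{\htd}^2$, while $\tau^n = d^{dn/2} z_1^n z_2^n \cdots z_d^n$ is a nonzero scalar multiple of the single $\htd$-monomial $z^{(n,\dots,n)}$; since distinct $n$ yield distinct monomials, the family $\{\tau^n : n \ge 0\}$ is orthogonal in $\htd$. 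Hence $z^n \mapsto \tau^n$ extends to an isometry $V$, and for $f = \sum_n c_n z^n \in \cK_d$ the computation $\|f \circ \tau\|_{\htd}^2 = \sum_n |c_n|^2 / a_n = \|f\|_{\cK_d}^2$ confirms that this map is indeed $f \mapsto f\circ\tau$. The range of $V$ is the closed linear span of $\{\tau^n : n\ge 0\}$, which is the closed span of $\{(z_1\cdots z_d)^n : n \ge 0\}$; because the monomials are orthogonal in $\htd$, this closed span is exactly the set of $g \in \htd$ whose Taylor coefficients $\widehat{g}(\alpha)$ vanish unless $\alpha = (n,\dots,n)$ for some $n$, i.e.\ the functions in $\htd$ that are power series in $z_1 z_2 \cdots z_d$.

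Next I would identify $\cK_2$. For $d = 2$ we have $\tau^n = 2^n z_1^n z_2^n$, so $\|\tau^n\|_{H^2_2}^2 = 4^n (n!)^2/(2n)!$ and therefore $a_n = 4^{-n}\binom{2n}{n}$. Since $(1-x)^{-1/2} = \sum_{n\ge 0}\binom{2n}{n} 4^{-n} x^n$, the reproducing kernel of $\cK_2$ equals $(1 - z\overline{w})^{-1/2}$, which is precisely the kernel of $\mathcal{D}_{1/2}$; as a reproducing kernel Hilbert space is determined by its kernel, $\cK_2 = \mathcal{D}_{1/2}$.

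Finally, for $\cK_d \subset A(\mathbb{D})$ when $d \ge 4$, I would compute $a_n = d^{-dn}(dn)!/(n!)^d$ in general and apply Stirling's formula to obtain $a_n \asymp n^{-(d-1)/2}$; hence $\sum_n a_n < \infty$ precisely when $(d-1)/2 > 1$, i.e.\ when $d \ge 4$. For such $d$ and any $f = \sum_n c_n z^n \in \cK_d$, the identity $\|f\|_{\cK_d}^2 = \sum_n |c_n|^2 / a_n$ together with Cauchy--Schwarz gives $\sum_n |c_n| \le \|f\|_{\cK_d}\big(\sum_n a_n\big)^{1/2} < \infty$, so the Taylor series of $f$ converges absolutely and uniformly on $\overline{\mathbb{D}}$, whence $f \in A(\mathbb{D})$. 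The only step requiring any care is the Stirling estimate for $a_n$, which is routine; I do not expect a genuine obstacle, since the content is essentially organized bookkeeping with the monomial basis and the known power series of $(1-x)^{-1/2}$.
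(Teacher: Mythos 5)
Your proof is correct; the paper itself gives no argument for this lemma, deferring instead to \cite[Lemma 7.1]{AHM+20a}, and your computation (orthogonality of the monomials $\tau^n$ in $H^2_d$ with matching norms, the binomial series for $(1-x)^{-1/2}$ when $d=2$, and the Stirling estimate $a_n \asymp n^{-(d-1)/2}$ giving $\sum_n a_n < \infty$ exactly for $d \ge 4$) is the standard argument one would expect there. The only point worth making explicit is that $\tau$ maps $\mathbb{B}_d$ into $\mathbb{D}$ (by the AM--GM inequality, as the paper notes just before the lemma), so that $f \circ \tau$ is defined and the $H^2_d$-norm-convergent series $\sum_n c_n \tau^n$ converges locally uniformly to it.
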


The following proposition makes it possible to construct functions in $H^\infty(\bB_d)$
that do not belong to $N^+(H^2_d)$. In fact, the resulting functions are not even quotients
of multipliers with non-vanishing denominators.

\begin{proposition}
  \label{prop:not_smirnov}
  Let $f \in H^\infty(\bD)$ be a function with the property that
  \begin{equation*}
    g \cdot f \notin \cK_d \quad \text{ for all } g \in \cK_d \setminus \{0\}.
  \end{equation*}
  Then $f \circ \tau \in H^\infty(\bB_d)$, but $f \circ \tau \notin N(H^2_d)$.
\end{proposition}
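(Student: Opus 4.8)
The plan is to exploit the invariance of $\tau$ under the subtorus $\Delta = \{w \in \mathbb{T}^d : w_1 w_2 \cdots w_d = 1\}$, acting coordinatewise on $\bB_d$, to turn a hypothetical representation $f\circ\tau = G/M$ on $\bB_d$ into a representation $\tilde G = \tilde M \cdot f$ living inside $\cK_d$, which is exactly what the hypothesis forbids. The first assertion is immediate: $\tau$ maps $\bB_d$ into $\bD$, so $f\circ\tau$ is holomorphic and bounded on $\bB_d$ with $\|f\circ\tau\|_\infty \le \|f\|_\infty$; hence $f\circ\tau \in H^\infty(\bB_d)$.

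For the second assertion I would argue by contradiction. Suppose $f\circ\tau \in N(H^2_d)$, so $f\circ\tau = G/M$ with $G \in H^2_d$, $M \in \Mult(H^2_d)$ and $M$ nowhere vanishing on $\bB_d$; equivalently $G = M\cdot(f\circ\tau)$ on $\bB_d$. Equip $\Delta$ with normalized Haar measure and, for $g$ holomorphic on $\bB_d$, set $(Pg)(z) = \int_\Delta g(w_1 z_1,\dots,w_d z_d)\,dw$. First I would record that $Pg$ is again holomorphic on $\bB_d$, and that for $g \in H^2_d$ this pointwise average coincides with the orthogonal projection of $g$ onto the closed linear span of $\{(z_1\cdots z_d)^n : n \ge 0\}$: expand $g$ in monomials, use $\int_\Delta w^\alpha\,dw = 1$ when $\alpha_1 = \dots = \alpha_d$ and $0$ otherwise, and justify the interchange of sum and integral by the absolute convergence bound $\sum_\alpha |\widehat g(\alpha)|\,|z^\alpha| \le \|g\|_{H^2_d}\, k(z,z)^{1/2} < \infty$ for $z \in \bB_d$. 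By Lemma~\ref{lem:embed}, the range of this projection is exactly $\ran V$.

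The key point is that $\tau(w_1 z_1,\dots,w_d z_d) = (w_1\cdots w_d)\,\tau(z) = \tau(z)$ for $w \in \Delta$, so $f\circ\tau$ is $\Delta$-invariant. Averaging the identity $G(w\cdot z) = M(w\cdot z)\,(f\circ\tau)(w\cdot z) = M(w\cdot z)\,(f\circ\tau)(z)$ over $w \in \Delta$ therefore yields the pointwise identity $PG = (PM)\cdot(f\circ\tau)$ on $\bB_d$. Since $G \in H^2_d$ and $M \in \Mult(H^2_d) \subset H^2_d$, both $PG$ and $PM$ lie in $\ran V$, so I can write $PG = \tilde G\circ\tau$ and $PM = \tilde M\circ\tau$ with $\tilde G, \tilde M \in \cK_d$. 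Then $\tilde G\circ\tau = (\tilde M\circ\tau)(f\circ\tau) = (\tilde M\cdot f)\circ\tau$ on $\bB_d$, and since $\tau(\bB_d) = \bD$ this forces $\tilde G = \tilde M\cdot f$ on $\bD$. Finally $\tilde M(0) = (PM)(0) = M(0) \ne 0$, so $\tilde M \in \cK_d\setminus\{0\}$, whereas $\tilde M\cdot f = \tilde G \in \cK_d$ --- contradicting the hypothesis, which completes the proof.

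The only delicate bookkeeping is around $P$: checking that the pointwise average is holomorphic, that it agrees with the orthogonal projection on $H^2_d$ with range $\ran V$, and --- most importantly --- that averaging converts $G = M\cdot(f\circ\tau)$ into $PG = (PM)\cdot(f\circ\tau)$, which works precisely because $f\circ\tau$ is $\Delta$-invariant. I would emphasize that the argument never needs $PM$ to be a multiplier, only that $PM \in \ran V$ and $PM(0) = M(0) \ne 0$.
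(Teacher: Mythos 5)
Your proof is correct and follows essentially the same route as the paper's: both isolate the component of the numerator and the denominator that is a power series in $z_1 z_2 \cdots z_d$, observe that multiplication by the invariant function $f \circ \tau$ respects this decomposition, and then pull back through the isometry $V$ of Lemma \ref{lem:embed} to obtain the forbidden identity $\tilde{G} = \tilde{M} \cdot f$ in $\cK_d$ with $\tilde{M}(0) = M(0) \neq 0$. The only difference is cosmetic: you realize the diagonal projection by averaging over the subtorus $\Delta$, whereas the paper performs the same projection by splitting the power series directly according to which monomials occur.
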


We first consider an example showing that the hypothesis of the proposition can be satisfied.

\begin{example}
  \label{exa:h_infty_4}
{\rm
  Let $d \ge 4$ and let $f$ be a Blaschke product whose zeros accumulate at every point of $\mathbb T$.
  Since $\cK_d \subset A(\bD)$ by Lemma \ref{lem:embed}, we find that
  $g f \notin \cK_d$ for all $g \in \cK_d \setminus\{0\}$. Thus, $f \circ \tau \in H^\infty(\bB_d)$,
  but $f \notin N^+(H^2_d)$ by the proposition.
}
\end{example}

\begin{proof}[Proof of Proposition \ref{prop:not_smirnov}]
  Let $f \in H^\infty(\bD)$ be as in the statement of Proposition \ref{prop:not_smirnov}.
  Since $f \in H^\infty(\mathbb{D})$, we have $f \circ \tau \in H^\infty(\bB_d)$.
  Suppose towards a contradiction that  $f \circ \tau = \frac{\varphi}{\psi}$ for
  functions $\varphi,\psi \in H^2_d$ and $\psi$ non-vanishing, hence
  $\varphi = \psi (f \circ \tau)$.

  We will show that we can achieve that $\varphi$ and $\psi$ belong to the
  range of the isometry $V$ of Lemma \ref{lem:embed}.
  To this end, we decompose $\varphi = \varphi_1 + \varphi_2$
  and $\psi = \psi_1 + \psi_2$ into holomorphic functions so that $\varphi_1$ and $\psi_1$
  are power series in $z_1 z_2 \cdots z_d$ and so that no monomial
  of the form $(z_1 z_2 \ldots z_d)^n$ occurs in $\varphi_2$ or $\psi_2$.
  Since $f \circ \tau$ is a power series in $z_1 z_2 \ldots z_d$,
  it follows that
  \begin{equation}
    \label{eqn:diagonal_embedding}
    \varphi_1 = (f \circ \tau) \psi_2.
  \end{equation}
  Moreover, since $\varphi,\psi \in H^2_d$, we also have $\varphi_1,\psi_1 \in H^2_d$
  and $\psi_1(0) = \psi(0) \neq 0$.
  Hence, $\varphi_1$ and $\psi_1$ belong
  to the range of the isometry $V$ of Lemma \ref{lem:embed},
  so there exist
  $g,h \in \cK_d$ such that $\varphi_1 = h \circ \tau$ and $\psi_1 = g \circ \tau$,
  and clearly $g(0)  = \psi_1(0) \neq 0$.

  From \eqref{eqn:diagonal_embedding}, we infer that $(h \circ \tau) = (f \circ \tau) \cdot (g \circ \tau)$.
  As $\tau$ maps $\bB_d$ onto $\bD$, this means that $h = f \cdot g$, contradicting the assumption on $f$.
  Thus,  $f \circ \tau \notin N(H^2_d)$.
\end{proof}

The construction of Example \ref{exa:h_infty_4} can be refined to work for all $d \ge 2$
and to yield a function in the ball algebra $A(\bB_d)$.

\vskip 10pt

{\bf \noindent Theorem}~\ref{cor:ball_nevanlinna}
{\em
  $A(\bB_d) \not \subset N(H^2_d)$ for all natural numbers $d \ge 2$.
}

\begin{proof}
  We first consider the case $d = 2$. We will use Lemma \ref{lem:embed} to embed the weighted
  Dirichlet space $\mathcal{D}_{1/2}$ into $H^2_2$.

  A sequence $(z_n)$ in $\mathbb{D}$ is called a zero set for $\mathcal{D}_{1/2}$ if there exists
  a function $f \in \mathcal{D}_{1/2} \setminus \{0\}$ that vanishes precisely on $\{z_n: n \in \mathbb{N}\}$.
  By \cite[Theorem 2]{PP11}, there exists a Blachke sequence $(z_n)$ that is not a zero
  set for $\mathcal D_{1/2}$ and whose only cluster point is $1$.
  Let $B$ the Blaschke product with zeros $(z_n)$ and let $f(z) = B(z) (1 -z)$. Then $f \in A(\bD)$.
  Since $(z_n)$ is not a zero set for $\mathcal{D}_{1/2}$, it is a uniqueness
  set for $\mathcal{D}_{1/2}$ by \cite[Proposition 9.37]{ampi},
  meaning that any function in $\mathcal{D}_{1/2}$ vanishing on $\{z_n: n \in \mathbb{N}\}$
  is identically zero.
  Thus,
  $g \cdot f \notin \mathcal D_{1/2}$ for any $g \in \mathcal D_{1/2} \setminus\{0\}$. In this
  setting, Lemma \ref{lem:embed} and Proposition \ref{prop:not_smirnov} imply that $f \circ \tau \notin N(H^2_2)$,
  but clearly $f \circ \tau \in A(\bB_2)$.

  The case of an arbitrary natural number $d \ge 2$ is easily deduced from the case $d=2$.
  Indeed,
  let $F \in A(\bB_2)$ but $F \notin N(H^2_2)$, let
  $P: \bB_d \to \bB_2$ be the projection onto the first $2$ coordinates and define $G = F \circ P$.
  Then $G \circ P \in A(\bB_d)$. If $G$ were of the form $G = \varphi / \psi$ for $\varphi,\psi \in \Mult(H^2_d)$
  and $\psi$ non-vanishing, then
  \begin{equation*}
    F = G \big|_{\bB_2} = \frac{\varphi \big|_{\bB_2}}{\psi \big|_{\bB_2}}
  \end{equation*}
  would be a quotient of two functions in $\Mult(H^2_2)$, a contradiction. Thus, $G \notin N(H^2_d)$.
\end{proof}

Theorem \ref{cor:ball_nevanlinna} also implies the following fact about topologies
on the Smirnov class $N^+(H^2_d)$.

\begin{proposition}
  \label{lem:top_obstacle}
  Let $d \ge 2$ be a natural number and let $\tau$ be a topology on $N^+(H^2_d)$ that
  gives $N^+(H^2_d)$ the structure of a topological vector space. Suppose that
  \begin{enumerate}
    \item point evaluations at points in $\bB_d$ are $\tau$-continuous, and
    \item $(N^+(H^2_d),\tau)$ is sequentially complete.
  \end{enumerate}
  Then there exists a sequence $(f_n)$ of functions analytic in a neighborhood of
  $\ol{\bB_d}$ such that $(f_n)$ tends to zero uniformly on $\ol{\bB_d}$, but is not null
  with respect to $\tau$.
\end{proposition}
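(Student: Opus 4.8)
The plan is to argue by contradiction using the Baire category theorem, exploiting the fact that a topology $\tau$ satisfying (1) and (2) is, by (2), a sequentially complete topological vector space in which every bounded-coefficient functional is built from point evaluations. Suppose no such sequence $(f_n)$ exists; that is, suppose \emph{every} sequence of functions analytic in a neighborhood of $\ol{\bB_d}$ that tends to zero uniformly on $\ol{\bB_d}$ is $\tau$-null. Since functions analytic in a neighborhood of $\ol{\bB_d}$ are exactly the functions whose homogeneous expansion converges in $H^\infty(\bB_d)$-norm faster than geometrically (equivalently, uniform limits of polynomials on $\ol{\bB_d}$ in a suitable sense), the hypothesis says precisely that the inclusion map $A(\bB_d) \hookrightarrow (N^+(H^2_d),\tau)$ — which a priori is only defined on the dense subspace of functions holomorphic past the closed ball — is sequentially continuous at $0$. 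Here I must first check that $A(\bB_d) \subset N^+(H^2_d)$ as sets; but by Theorem~\ref{cor:ball_nevanlinna} this \emph{fails}: there is $f \in A(\bB_d)$ with $f \notin N(H^2_d) \supseteq N^+(H^2_d)$. So the cleaner route is: take $f \in A(\bB_d)\setminus N(H^2_d)$ from Theorem~\ref{cor:ball_nevanlinna}, let $f_n$ be the Cesàro (or Fejér-type) means of $f$, so $f_n$ is a polynomial, $f_n \to f$ uniformly on $\ol{\bB_d}$, hence the differences $f_n - f_m$ tend to $0$ uniformly on $\ol{\bB_d}$, and each $f_n - f_m$ is a polynomial, hence analytic in a neighborhood of $\ol{\bB_d}$. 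If every such null-in-$\sup$-norm sequence were $\tau$-null, then $(f_n)$ would be $\tau$-Cauchy in $N^+(H^2_d)$.

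By sequential completeness (hypothesis (2)), $(f_n)$ would converge in $\tau$ to some $g \in N^+(H^2_d)$. By continuity of point evaluations (hypothesis (1)), for each $z \in \bB_d$ we would have $g(z) = \lim_n f_n(z) = f(z)$, since $f_n \to f$ uniformly on $\ol{\bB_d}$ hence pointwise on $\bB_d$. Therefore $g = f$ as holomorphic functions on $\bB_d$, forcing $f \in N^+(H^2_d) \subseteq N(H^2_d)$. This contradicts the choice of $f$. Hence the assumption was false: there exists a sequence $(f_n)$ of functions analytic in a neighborhood of $\ol{\bB_d}$ with $f_n \to 0$ uniformly on $\ol{\bB_d}$ but $(f_n)$ not $\tau$-null.

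The only genuine point requiring care is the polynomial approximation step: I need a sequence of polynomials $p_n$ with $p_n \to f$ uniformly on $\ol{\bB_d}$ for the given $f \in A(\bB_d)$. This is classical — one may take $p_n = (f_{r_n})_{\text{partial sums}}$ where $f_r(z) = f(rz)$: for $r<1$, $f_r$ is holomorphic in a neighborhood of $\ol{\bB_d}$, so its Taylor partial sums converge to $f_r$ uniformly on $\ol{\bB_d}$; and $f_r \to f$ uniformly on $\ol{\bB_d}$ as $r \uparrow 1$ by uniform continuity of $f$ on the compact set $\ol{\bB_d}$ (this is exactly the density argument already used in the proof of Proposition~\ref{lem:F-space}). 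A diagonal choice produces polynomials $p_n \to f$ uniformly on $\ol{\bB_d}$, and then $f_n := p_{n+1} - p_n$ (or, if one prefers a sequence converging to $0$ rather than telescoping, $f_n := p_n - f$ is not polynomial, so instead keep the telescoping differences, which are polynomials and tend to $0$ uniformly). I expect this approximation bookkeeping, rather than any deep functional-analytic obstacle, to be the main thing to get right; the contradiction itself is immediate once $A(\bB_d) \not\subset N(H^2_d)$ is in hand.
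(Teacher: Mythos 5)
Your proof is correct and is essentially the paper's argument in contrapositive form: both take $f \in A(\bB_d)\setminus N(H^2_d)$ from Theorem~\ref{cor:ball_nevanlinna}, approximate it uniformly on $\ol{\bB_d}$ by functions analytic past the boundary, and observe that the approximating sequence cannot be $\tau$-Cauchy (else sequential completeness plus continuity of point evaluations would force $f \in N^+(H^2_d)$), so the differences along suitable subsequences give the desired non-$\tau$-null sequence. The only cosmetic differences are that the paper uses the dilations $f((1-\tfrac{1}{n+1})z)$ directly rather than polynomials, and your opening mention of the Baire category theorem is never actually used.
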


\begin{proof}
  By Theorem \ref{cor:ball_nevanlinna}, there exists $f \in A(\bB_d)$ but $f \notin N^+(H^2_d)$.
  For $n \in \bN$, let
  \begin{equation*}
    g_n(z) = f( (1-\tfrac{1}{n+1}) z),
  \end{equation*}
  so that $(g_n)$ is analytic in a neighborhood of $\ol{\bB_d}$ and
  converges to $f$ uniformly on $\ol{\bB_d}$. Since $f \notin N^+(H^2_d)$, the first assumption on $\tau$ implies
  that $(g_n)$ does not converge with respect to $\tau$, and hence it is not $\tau$-Cauchy by the second assumption.
  Therefore, there exist strictly increasing sequences $(n_k)$ and $(m_k)$ of natural numbers and a $\tau$-neighborhood $U$ of the origin such that $g_{n_k} - g_{m_k} \notin U$ for all $k \in \bN$. Let $f_k = g_{n_k} - g_{m_k}$.
  Then $(f_k)$ is not $\tau$-null, but converges to zero uniformly on $\ol{\bB_d}$.
\end{proof}

\section{The containing Fr\'echet space of \texorpdfstring{$\npu$}{the uniform Smirnov class}}
\label{secfr}

In \cite{yan73b}, Yanagihara described a Fr\'echet space $F^+$ that contained the Smirnov class, a sort of ``locally convex completion''. In this section, we shall do the same for $\npu$. Throughout this section, we shall assume that $f$ is a holomorphic function on $\bB_d$ and that $f = \sum_{n=0}^\i f_n$ is its decomposition into homogeneous polynomials.

\begin{lemma}
\label{F+Lemma}
Let $f$ be analytic on $\B_d$, and for $0\le r<1$ write $M(r,f)=\sup_{|z|=r}|f(z)|$. Then the following are equivalent:
\begin{enumerate}[label=\normalfont{(\roman*)}]
  \item $\displaystyle{\limsup_{r \to 1} (1-r)\log M(r,f)\le 0,}$

\item $\displaystyle{\limsup_{n\to \infty}\frac{\log\|f_n\|_\infty}{\sqrt{n}} \le 0,}$

\item $\displaystyle{\limsup_{n\to \infty}\frac{\log\|f_n\|_{H^2_d}}{\sqrt{n}} \le 0,}$
\item $\displaystyle{ \limsup_{|\alpha|\to \infty}\frac{\log |\hat{h}(\alpha)|\omega_\alpha}{\sqrt{|\alpha|}} \le 0.}$
\end{enumerate}
\end{lemma}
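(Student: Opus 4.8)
The plan is to prove the chain of implications (i) $\Rightarrow$ (iv) $\Rightarrow$ (iii) $\Rightarrow$ (ii) $\Rightarrow$ (i), using the equivalences in Lemma~\ref{lem:norm_equiv} and Corollary~\ref{cor:decay_equiv} to move freely between $\|f_n\|_\infty$, $\|f_n\|_{H^2_d}$ and $\max_\alpha |\widehat f(\alpha)| \omega_\alpha$. Indeed, since $\|h_n\|_\infty \le \|h_n\|_{H^2_d} \lesssim (n+1)^{(d-1)/2}\|h_n\|_\infty$ and similarly for the coefficient norm, and since $\log(n+1)^{(d-1)/2} = o(\sqrt n)$, dividing by $\sqrt n$ and taking $\limsup$ shows that (ii), (iii) and (iv) are equivalent essentially for free. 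So the real content is the equivalence of (i) with (ii).

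For (ii) $\Rightarrow$ (i): fix $\varepsilon > 0$; by (ii) there is $N$ so that $\|f_n\|_\infty \le e^{\varepsilon \sqrt n}$ for $n \ge N$. Then for $|z| = r < 1$,
\begin{equation*}
  M(r,f) \le \sum_{n < N} \|f_n\|_\infty r^n + \sum_{n \ge N} e^{\varepsilon \sqrt n} r^n.
\end{equation*}
The first sum is bounded; for the second, the standard estimate for $\sum_n e^{\varepsilon\sqrt n} r^n$ as $r \to 1$ is of order $\exp(C \varepsilon^2/(1-r))$ for an absolute constant $C$ (split the sum at $n \sim (1-r)^{-2}$, or compare with the integral $\int_0^\infty e^{\varepsilon\sqrt t} t\,r^t\,dt$ via the substitution $t = s^2$). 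Hence $(1-r)\log M(r,f) \le C\varepsilon^2 + o(1)$, and letting $r \to 1$ then $\varepsilon \to 0$ gives (i).

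For (i) $\Rightarrow$ (ii): this is the direction where I expect the main work, and it is a Cauchy-estimate-meets-saddle-point argument. From $f_n(z) = \frac{1}{2\pi i}\int_{|w|=r} \frac{f(w\zeta)}{w^{n+1}}\,dw$ along the slice in direction $\zeta$, taking $|\zeta| = 1$ and optimizing over $r \in (0,1)$ we get $\|f_n\|_\infty \le M(r,f)\,r^{-n}$ for all $r$, hence $\log\|f_n\|_\infty \le \log M(r,f) + n\log(1/r)$. Given $\varepsilon > 0$, hypothesis (i) provides $r_0 < 1$ with $\log M(r,f) \le \frac{\varepsilon}{1-r}$ for $r \in (r_0,1)$; for such $r$ this yields $\log \|f_n\|_\infty \le \frac{\varepsilon}{1-r} + n(1-r)$ (using $\log(1/r) \le (1-r)/r \le$ const$\cdot(1-r)$ near $1$, absorbing the constant into $\varepsilon$). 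Choosing $1 - r = \sqrt{\varepsilon/n}$ — which lies in $(r_0,1)$ once $n$ is large — the right side becomes $2\sqrt{\varepsilon n}$, so $\limsup_n \log\|f_n\|_\infty / \sqrt n \le 2\sqrt\varepsilon$, and $\varepsilon \to 0$ finishes it.

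The only genuine obstacle is getting the constants in the two summation/optimization estimates to line up cleanly (the $\sqrt\varepsilon$ versus $\varepsilon$ bookkeeping, and the passage from $\log(1/r)$ to $1-r$); these are routine but need care so that both $\limsup$ inequalities collapse to $\le 0$ after sending the auxiliary parameter to zero. Everything else reduces to Lemma~\ref{lem:norm_equiv} and the polynomial-in-$n$ factors being swallowed by $\sqrt n$ in the $\limsup$.
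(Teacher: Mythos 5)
Your proposal is correct, and for the core equivalence (i) $\Leftrightarrow$ (ii) it takes a genuinely different route from the paper. The paper disposes of (ii) $\Leftrightarrow$ (iii) $\Leftrightarrow$ (iv) exactly as you do, via Lemma~\ref{lem:norm_equiv} and the fact that $\log(n+1)^{(d-1)/2}=o(\sqrt{n})$, but for (i) $\Leftrightarrow$ (ii) it simply observes that $f(zw)=\sum_n f_n(w)z^n$ for $z\in\D$, $w\in\B_d$, and cites Lemma~1 of Yanagihara's paper \cite{yan73b} (p.~96), which is the one-variable statement with $\|f_n\|_\infty$ replaced by Taylor coefficients; taking suprema over the slices transfers it to the ball. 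You instead prove that one-variable lemma from scratch: the Cauchy estimate $\|f_n\|_\infty\le M(r,f)r^{-n}$ optimized at $1-r\asymp\sqrt{\varepsilon/n}$ for (i) $\Rightarrow$ (ii), and the elementary bound $\sum_n e^{\varepsilon\sqrt{n}}r^n\le\exp\bigl(C\varepsilon^2/(1-r)\bigr)$ (e.g.\ via $\varepsilon\sqrt{n}\le\varepsilon^2/(2\delta)+n\delta/2$ with $\delta=1-r$) for the converse. Both estimates are sound, the bookkeeping you flag does collapse to $\le 0$ as $\varepsilon\to 0$, and the requirement that $1-\sqrt{\varepsilon/n}>r_0$ for large $n$ is correctly noted. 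What your approach buys is self-containedness --- the reader need not consult Yanagihara --- at the cost of about a page of routine estimation that the paper outsources; the mathematical content is the same.
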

\begin{proof} The equivalence of (ii), (iii), and (iv) follows from Lemma \ref{lem:norm_equiv}. If $z\in \D$ and $w\in \B_d$, then $f(zw)=\sum_{n=0}^\infty f_n(w)z^n$. Thus the equivalence of (i) and (ii) follows from Lemma 1 of \cite{yan73b}, p. 96.
\end{proof}
Let $F^+_u$ be the collection of all analytic functions on $\B_d$ that satisfy any of the conditions of  Lemma \ref{F+Lemma}. For a positive constant $c$ and $f\in F^+_u$ set
\be
\label{eqfr1}
\|f\|^2_c= \sum_{n=0}^\infty \|f_n\|^2_{H^2_d}\ e^{-c\sqrt{n}},
\ee
 and define a locally convex topology on $F^+_u$ by the family of semi-norms $\{\| \cdot \|_c\}_{c>0}$. Since $\|f\|^2_c$ is increasing as $c$ decreases to 0, it is clear that $F^+_u$ is metrizable, and a translation invariant metric is given by
 \be
 \label{eqfr2}
 \dF(f,g) \= \sum_{k=1}^\infty 2^{-k} \frac{ \|f-g\|_{1/k}}{1+\|f-g\|_{1/k}}.
 \ee

Furthermore, one easily checks that for each $f=\sum_{n\ge 0}f_n$, the partial sums $p_n=\sum_{k=0}^n f_k$ converge to $f$ in $\|\cdot\|_c$ for each $c>0$, hence the polynomials are dense in $F^+_u$. Moreover, a routine argument shows that $F^+_u$ is complete with respect to the metric $\dF$. Therefore  $F^+_u$ is a Fr\'echet space.

\begin{prop}
  \label{prop:frechet_contained}
  $(N^+_u,\rho)$ embeds continuously into $F^+_u$.
\end{prop}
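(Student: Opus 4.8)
The plan is to show two things: that $N^+_u$ is a subset of $F^+_u$, and that the inclusion map is continuous, i.e.\ that for every $c > 0$ the seminorm $\|\cdot\|_c$ is dominated (near the origin) by the metric $\rho$. The set-theoretic inclusion is essentially immediate: if $f \in N^+_u$ then by Lemma~\ref{lem:Smirnov_growth} we have $\|f_n\|_{H^2_d} \le M_c e^{c\sqrt{n}}$ for \emph{every} $c > 0$, which is exactly condition (iii) of Lemma~\ref{F+Lemma}, so $f \in F^+_u$.

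For continuity, since both spaces are metrizable topological vector spaces it suffices to check sequential continuity at $0$: if $\rho(f^{(j)}, 0) \to 0$ then $\|f^{(j)}\|_c \to 0$ for each fixed $c > 0$. Fix $c > 0$. The key input is the coefficient estimate \eqref{eqaa4} from Lemma~\ref{lemb3} together with the slice trick already used in the proof of Lemma~\ref{lem:Smirnov_growth}: for any $g \in N^+_u$ and any $\zeta \in \partial \mathbb{B}_d$ one has $\vertiii{g \circ i_\zeta} \le \vertiii{g}$, and the $n$-th homogeneous part satisfies $|g_n(\zeta)| = |\widehat{g \circ i_\zeta}(n)| \le e^{\sqrt{8 n \vertiii{g}}\,(1+\varepsilon_n)}$ where $\varepsilon_n = o(1)$ depends only on $\vertiii{g}$ and decreases with it. Combined with Lemma~\ref{lem:norm_equiv}(a), this gives
\[
\|g_n\|_{H^2_d} \ \lesssim \ (n+1)^{(d-1)/2} \, e^{\sqrt{8 n \vertiii{g}}\,(1+\varepsilon_n)}.
\]
So, given $c > 0$, choose $\delta > 0$ small enough that $\sqrt{8\delta}\,(1 + \sup_n \varepsilon_n) < c/2$ (possible since $\varepsilon_n$ is bounded and decreasing once $\vertiii{g} \le \delta$); then whenever $\vertiii{g} \le \delta$ we get $\|g_n\|_{H^2_d}^2 \lesssim (n+1)^{d-1} e^{c\sqrt{n}}$, and hence
\[
\|g\|_c^2 \ = \ \sum_{n=0}^\infty \|g_n\|_{H^2_d}^2 e^{-c\sqrt{n}} \ \lesssim \ \sum_{n=0}^\infty (n+1)^{d-1} e^{c\sqrt{n} - c\sqrt{n}},
\]
which is \emph{not} summable as written --- so the argument needs one more turn: I would run the same estimate with $c$ replaced by $c/2$ in the exponential bound on $\|g_n\|_{H^2_d}$, leaving a factor $e^{-c\sqrt{n}/2}$ to absorb the polynomial $(n+1)^{d-1}$ and make the series converge, with a bound of the form $\|g\|_c^2 \le C$ depending only on $c$ (not on $g$) once $\vertiii{g} \le \delta$.

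To finish, I would scale: for a sequence $f^{(j)} \to 0$ in $\rho$, fix $c > 0$ and $\varepsilon > 0$, pick $\lambda > 0$ with $\lambda^{-1} \le \varepsilon$, and note that for $j$ large $\vertiii{f^{(j)}} \le \vertiii{\lambda f^{(j)}}$ small forces... more simply, use that $\vertiii{\lambda f^{(j)}} \le (\lfloor \lambda \rfloor + 1)\vertiii{f^{(j)}} \to 0$, so eventually $\vertiii{\lambda f^{(j)}} \le \delta$, giving $\|\lambda f^{(j)}\|_c \le C$, i.e.\ $\|f^{(j)}\|_c \le C/\lambda \le C\varepsilon$. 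As $\varepsilon$ was arbitrary, $\|f^{(j)}\|_c \to 0$; as $c$ was arbitrary, $d_F(f^{(j)}, 0) \to 0$. The main obstacle is purely bookkeeping: making sure the polynomial correction factor $(n+1)^{d-1}$ from Lemma~\ref{lem:norm_equiv}(a) is absorbed cleanly, which is handled by the $c \mapsto c/2$ split above, and tracking that the constant $\delta$ depends only on $c$ (using that $\varepsilon_n$ in Lemma~\ref{lemb3} is uniformly bounded once $\vertiii{g}$ is bounded).
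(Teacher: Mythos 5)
Your proof is correct, but it takes a genuinely different route from the paper. The paper's proof is two lines: Lemma~\ref{lem:Smirnov_growth} gives the set inclusion $N_u^+ \subset F_u^+$ (via condition (iii) of Lemma~\ref{F+Lemma}), and then continuity is deduced abstractly from the closed graph theorem for $F$-spaces --- the graph of the inclusion is closed because convergence in either space implies pointwise convergence on $\mathbb{B}_d$, and both $(N_u^+,\rho)$ and $F_u^+$ are complete metrizable topological vector spaces. You instead prove continuity by hand: you extract from \eqref{eqaa4} a \emph{uniform} bound $\|g\|_c \le C_c$ valid on a whole $\rho$-ball $\vertiii{g} \le \delta(c)$ (using that the error term $\varepsilon_n$ in Lemma~\ref{lemb3} depends only on $\vertiii{g}$ and decreases with it), and then exploit homogeneity of $\|\cdot\|_c$ together with the near-homogeneity $\vertiii{\lambda f} \le (\lfloor\lambda\rfloor+1)\vertiii{f}$ to convert boundedness on a neighborhood into continuity at $0$. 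Your self-correction about replacing $c$ by $c/2$ (really $c/4$ in the exponent, so that a factor $e^{-c\sqrt{n}/2}$ survives to absorb $(n+1)^{d-1}$) is exactly the right fix, and the scaling step at the end is sound. What the paper's approach buys is brevity and no need to track the uniformity of $\varepsilon_n$ in $\vertiii{g}$; what yours buys is an explicit, quantitative modulus of continuity and independence from the closed graph theorem. Both ultimately rest on the same coefficient estimate from Lemma~\ref{lemb3}.
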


\begin{proof}
  This is immediate from Lemma \ref{lem:Smirnov_growth} and the closed graph theorem for $F$-spaces,
  see, e.g.\ \cite[Theorem 2.15]{Rudin91}.
\end{proof}

The next result, combined with Theorem \ref{thm:dual_uniform_smirnov} and Corollary \ref{cor:decay_equiv},
shows that $F_u^+$ has the same dual space as $N_u^+$.

\begin{prop}
\label{propfr1}
 Let $h$ be analytic on $\mathbb B_d$, let $h=\sum_{n\ge 0}h_n$ be its expansion into homogeneous polynomials, and assume that there are $M, c>0$ such that $\|h_n\|_{H^2_d} \le M e^{-c\sqrt{n}}$ for each $n\in \mathbb N$. Then for each $f\in F^+_u$, the series
$$\Gamma_h(f)=\sum_{n\ge 0} \la f_n,h_n\ra_{H^2_d}= \sum_{n=0}^\infty\sum_{|\alpha|=n }\omega_\alpha^2 \hat{f}(\alpha)\overline{\hat{h}(\alpha)}$$
converges absolutely, and $\Gamma_h$ defines a continuous linear functional on $F^+_u$.

Moreover, every continuous linear functional on $F^+_u$ is of this form.
\end{prop}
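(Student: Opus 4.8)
The plan is to prove the three assertions in order: absolute convergence of the series, continuity of $\Gamma_h$, and surjectivity of the map $h \mapsto \Gamma_h$ onto $(F^+_u)^*$. For the first two, the key point is that membership in $F^+_u$ gives the coefficient bound $\|f_n\|_{H^2_d} = O(e^{\varepsilon \sqrt n})$ for \emph{every} $\varepsilon > 0$ (this is condition (iii) of Lemma \ref{F+Lemma}). Pairing this with the hypothesis $\|h_n\|_{H^2_d} \le M e^{-c\sqrt n}$ and applying Cauchy--Schwarz term by term, $\sum_n |\la f_n, h_n\ra_{H^2_d}| \le M \sum_n \|f_n\|_{H^2_d} e^{-c\sqrt n} \lesssim \sum_n e^{-(c/2)\sqrt n} < \infty$, and the inner double sum identity is just the expansion of $\la f_n, h_n \ra_{H^2_d}$ in the monomial basis with weights $\omega_\alpha^2$. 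For continuity, I would estimate directly against one of the defining seminorms: by Cauchy--Schwarz in $n$ against the weight $e^{-c\sqrt n/2}$,
\begin{equation*}
|\Gamma_h(f)| \le \sum_n \|f_n\|_{H^2_d} e^{-(c/4)\sqrt n} \cdot M e^{-(c/4)\sqrt n}
\le M \Big( \sum_n \|f_n\|_{H^2_d}^2 e^{-(c/4)\sqrt n} \Big)^{1/2} \Big( \sum_n e^{-(c/4)\sqrt n}\Big)^{1/2},
\end{equation*}
so $|\Gamma_h(f)| \le C_{h} \|f\|_{c/4}$, which shows $\Gamma_h$ is continuous with respect to the seminorm $\|\cdot\|_{c/4}$, hence continuous on the Fr\'echet space $F^+_u$.

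For the converse, let $\Lambda \in (F^+_u)^*$. Since the topology on $F^+_u$ is generated by the increasing family $\{\|\cdot\|_c\}_{c>0}$ (with $\|\cdot\|_c$ increasing as $c \downarrow 0$), continuity of $\Lambda$ means there exist $c_0 > 0$ and $C > 0$ with $|\Lambda(f)| \le C \|f\|_{c_0}$ for all $f \in F^+_u$. The space $(F^+_u, \|\cdot\|_{c_0})$ is, up to completion, the weighted Bergman-type Hilbert space $\mathcal H_{c_0}$ of homogeneous expansions with $\sum_n \|f_n\|_{H^2_d}^2 e^{-c_0\sqrt n} < \infty$; since the polynomials are dense in $F^+_u$ and in $\mathcal H_{c_0}$, $\Lambda$ extends to a bounded functional on $\mathcal H_{c_0}$, so by the Riesz representation theorem there is an element represented by a sequence $(h_n)$ of homogeneous polynomials with $\sum_n \|h_n\|_{H^2_d}^2 e^{c_0\sqrt n} < \infty$ — note the sign flip in the exponent coming from the dual pairing weights — such that $\Lambda(f) = \sum_n \la f_n, h_n \ra_{H^2_d}$ for all polynomials $f$, hence for all $f \in F^+_u$ by density. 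The summability $\sum_n \|h_n\|_{H^2_d}^2 e^{c_0 \sqrt n} < \infty$ forces $\|h_n\|_{H^2_d} = O(e^{-(c_0/2)\sqrt n})$, which is exactly the hypothesis of the proposition (with $c = c_0/2$); in particular $h = \sum_n h_n$ is analytic on $\mathbb B_d$ and lies in the relevant decay class by Lemma \ref{lem:norm_equiv} and Corollary \ref{cor:decay_equiv}. Thus $\Lambda = \Gamma_h$.

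I expect the main obstacle to be bookkeeping the direction of the weight in the duality pairing: because $\Gamma_h(f) = \sum_n \la f_n, h_n \ra_{H^2_d}$ uses the $H^2_d$ inner product, the representing vector of a functional bounded by $\|\cdot\|_{c_0}$ (which has weight $e^{-c_0\sqrt n}$) must itself be square-summable against the \emph{reciprocal} weight $e^{+c_0\sqrt n}$ — this is the step where one must be careful to identify the dual Hilbert space of $\mathcal H_{c_0}$ correctly, and where the exponential decay of $h_n$ ultimately comes from. The rest is routine: density of polynomials (already established above for $F^+_u$), the fact that the seminorm family is increasing so that continuity is controlled by a single seminorm, and Cauchy--Schwarz estimates. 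One should also double-check that the representing sequence $(h_n)$ produced by Riesz representation genuinely consists of the homogeneous components of a single holomorphic function on $\mathbb B_d$, but this is immediate from the decay estimate, which gives a positive radius of convergence (indeed convergence on all of $\mathbb B_d$ and more).
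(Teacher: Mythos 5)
Your proposal is correct and follows essentially the same route as the paper: two applications of the Cauchy--Schwarz inequality to get absolute convergence and domination by a single seminorm $\|\cdot\|_{c'}$ for the forward direction, and for the converse, reduction of continuity to a bound $|\Lambda(f)| \le C\|f\|_{c_0}$ followed by Riesz representation in the weighted Hilbert space $\mathcal H_{c_0}$, with the reciprocal-weight bookkeeping $h_n = e^{-c_0\sqrt n} g_n$ yielding the decay $\|h_n\|_{H^2_d} = O(e^{-(c_0/2)\sqrt n})$ exactly as in the paper's direct argument. The only cosmetic differences are your choice of seminorm index ($c/4$ versus the paper's $c$) and that the paper spells out the $\varepsilon$--$\delta$ verification that $d_F$-continuity is controlled by one seminorm, which you instead cite as the standard fact for a directed family.
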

\begin{proof} For each $n \in \mathbb N$ the Cauchy--Schwarz inequality implies that
$$\sum_{|\alpha|=n }\omega_\alpha^2 |\hat{f}(\alpha)\hat{h}(\alpha)| \le \|f_n\|_{H^2_d} \|h_n\|_{H^2_d} \le M \|f_n\|_{H^2_d} e^{-c\sqrt{n}}.$$ Hence, by another application of the Cauchy--Schwarz inequality, we see that
 $$\sum_{n=0}^\infty \sum_{|\alpha|=n }\omega_\alpha^2 |\hat{f}(\alpha)\hat{h}(\alpha)| \ \le \ M \|f\|_c \ \sqrt{\sum_{n=0}^\infty e^{-c\sqrt{n}}}$$ holds for each $f\in F^+_u$. This  implies that $\Gamma_h$ is continuous on $F^+_u$.

 The converse follows from the continuous inclusion $(N_u^+,\rho) \subset F_u^+$ in Proposition \ref{prop:frechet_contained} and our earlier description of the dual space of $(N_u^+,\rho)$ in Theorem \ref{thm:dual_uniform_smirnov}.
 We provide a direct argument.

If $\Phi$ is a continuous linear functional on $F^+_u$,
then it follows from the definition of the topology on $F^+_u$ in terms of the seminorms $\|\cdot\|_c$ that there exist $c > 0$ and $M \ge 0$ so that $|\Phi(f)| \le M \|f\|_c$
for every $f \in F_u^+$.
To see this from the metric $\dF$, notice that by continuity of $\Phi$, there is $\delta>0$ such that $|\Phi(f)|<1$, whenever $f \in F^+_u$ with $\dF(f,0)<\delta$. Choose $N$ such that $2^{-N}<\delta$ and set $\varepsilon= \frac{\delta}{2N}$. Then for each $f\in F^+_u$ with $\|f\|_{1/N} <\varepsilon$ we have
\beq
\dF(f,0) & \ \le \ & \sum_{k=1}^N 2^{-k} \|f\|_{1/k} + 2^{-N} \\
&<& N \|f\|_{1/N} +\delta/2\\
&<& \delta.
\eeq
 This implies that $|\Phi(f)| \le \frac{1}{\varepsilon} \|f\|_{1/N}$ for every polynomial $f$.
 Let $\mathcal H_N$ denote the Hilbert space of holomorphic functions on $\B_d$ with norm given
 by \eqref{eqfr1} with $c = \frac{1}{N}$.
 Then
 $\Phi$ extends to define a bounded linear functional on  $\mathcal H_N$.
  By the Riesz representation theorem, there is $g=\sum_{n=0}^\infty g_n\in \mathcal H_N$ such that $\|g\|_{1/N}\le 1/\varepsilon$ and \[
  \Phi(f)= \sum_{n=0}^\infty e^{-\frac{\sqrt{n}}{N}} \la f_n,g_n\ra_{H^2_d}.\]
   For $n \in \mathbb N$ set $h_n= e^{-\frac{\sqrt{n}}{N}}g_n$, then
\beq
\|h_n\|_{H^2_d} &\=& e^{-\frac{\sqrt{n}}{N}}\|g_n\|_{H^2_d} \\
&\le &  e^{-\frac{\sqrt{n}}{2N}}\|g\|_{1/N}\\
&\le&  \frac{e^{-\frac{\sqrt{n}}{2N}}}{\varepsilon},
\eeq
and $\Phi=\Gamma_h$.
\end{proof}

From Proposition \ref{prop:frechet_contained}, we get that $\npu$ embeds continuously into $F^+_u$,
and by Theorem \ref{thmH1}, so does $\frac{1}{m} H^2_d$ for each cyclic multiplier $m$.
Since $F_u^+$ is locally convex, it follows that $(\nph, \I)$ embeds continuously into $F_u^+$.
So if we use the metric $\dF$ from \eqref{eqfr2}, we get that the identity map is continuous from
$(\nph, \I)$ to $(\nph, \dF)$. Moreover, since the polynomials are dense in $F^+_u$, we have that $\nph$ is dense
in $F^+_u$. Therefore $(\nph, \dF)$ has the same dual as $F^+_u$, and by Theorem \ref{thmH2} and Proposition \ref{propfr1}, this is the
same as the dual of $(\nph, \I)$.

If $\mathcal V$ is a locally convex vector space with dual ${\mathcal V}^*$, there is a finest
topology on $\mathcal V$ with this dual, called the Mackey topology for the pair.
 Certain conditions on a topology force it to coincide with the Mackey topology; these include being metrizable (as $\dF$ is) or being barrelled (as $\I$ is) \cite[10-1.9]{wil}.
Putting these facts together, we get the following conclusion.

\bt
\label{thmfr1}
On $\nph$, the inductive limit topology $\I$ and the metric topology $\dF$ coincide.
\et
For the convenience of the reader, here is a direct proof.

\begin{proof}
As the  by Theorem \ref{thmH2} and Proposition \ref{propfr1} identity map is continuous from
$(\nph, \I)$ to $(\nph, \dF)$ by the discussion preceding the theorem, we need to prove that the identity map from
 $(\nph, \dF)$ to $(\nph,\I)$ is also continuous. To do this,  it suffices to show that for each continuous semi-norm $p$ on $(\nph,\I)$, the inclusion $(N^+,\dF)$ to $(\nph,p)$ is continuous, since the topology on any locally convex space is give by its continuous semi-norms (see e.g. \cite[1.10.1]{ed95}).

 So let $(f_n)$ be a sequence in $(\nph,\dF)$ tending to zero. We claim that $(f_n)$ tends to zero weakly in $(\nph,p)$. Indeed, if $\Gamma$ is a continuous linear functional on $(\nph,p)$, then $\Gamma$ is continuous on $(\nph,\I)$ and hence on $(\nph,\dF)$ by Theorem \ref{thmH2} and Proposition \ref{propfr1}.
 Thus, $\Gamma(f_n)$ tends to $0$ for all $\Gamma$, establishing weak convergence. The uniform boundedness principle for the semi-normed space $(\nph,p)$ implies that $\sup_n p(f_n) < \infty$. (This follows from the uniform boundedness principle for normed spaces by taking a quotient.) So we have shown that if $\dF(f_n,0) \to 0$, then $p(f_n)$ is bounded. But if $\dF(f_n,0) \to 0$, then by passing to a subsequence, we may achieve that $\dF(n
        f_n,0) \to 0$, hence $p(n f_n)$ is bounded, so $(f_n)$ tends to $0$ in $(\nph,p)$. This proves continuity of $(\nph,\dF) \to (\nph,p)$.
      \end{proof}

\section{Open Questions}

Lemma \ref{lem:Kaplansky_outer} says that if $m$ is the lift of an outer function, then one
can find cyclic multipliers $\varphi_n$ so that $\varphi_n m$ is in the unit ball and  tends to $1$ weak-*.
Is this true for all cyclic $m$?
\begin{question}
\label{q1}
Can Lemma \ref{lem:cyclic_Kaplansky} be improved to conclude that each $\varphi_n$ can be chosen to be cyclic?
\end{question}

As pointed out in Section \ref{secintro}, when $d = 1$ there is no difference between the common range
of co-analytic Toeplitz operators with cyclic symbols, and all non-zero co-analytic Toeplitz operators.
Is this true for $d \geq 2$?
\begin{question}
\label{q2}
What is $\bigcap \{ {\rm ran}(\tms) : m \in {\rm Mult}(\htd), m \neq 0 \}$?
\end{question}


In \cite{naw89}, M. Nawrocki showed that the dual of the full Smirnov class on the ball $\B_d$ consists of those
functionals $\Gamma$ satisfying
\be
\label{eqh1}
| \Gamma ( z^\alpha) | \ \leq \ M \oma e^{- c |\alpha|^{d/(d+1)}}.
\ee
In \cite{kormcc96}, it is shown that Condition \eqref{eqh1} is not necessary for a function
to be in the common range of all co-analytic Toeplitz operators on the Hardy space of the sphere.
\begin{question}
\label{q3}
What is the common range of all non-zero co-analytic Toeplitz operators on
$H^2(\partial \B_d)$?
\end{question}

\begin{question}
Do Questions \ref{q2} and \ref{q3} have the same answer?
\end{question}

\appendix
\section{Inductive limits}
\label{secil}

Let $\mathcal V$ be a  topological vector space, and for each $\alpha$ in some index set $A$ let
$X_\alpha$ be a topological vector space that embeds continuously in $\mathcal V$. For example, $\mathcal V$ could be all holomorphic functions on some domain $\Omega$, with the topology of locally uniform convergence, and each $X_\alpha$ could be some Banach space of holomorphic functions on $\Omega$.

Let $Y = \cup_{\alpha \in A} X_\alpha$. There are two different ways to define an inductive limit topology on $Y$. The first, which we shall call the locally convex inductive limit topology and denote $\IH$,
has as a neighborhood basis at $0$ all absolutely convex sets $U$ with the property that $U \cap X_\alpha$ is open
for every $\alpha \in A$. A basis for the topology is sets of the form $y + U$, where $y \in Y$ and $U$
is a neighborhood of $0$.
The topology $\I$ has advantages:
\begin{itemize}
\item
It is a vector space topology (addition and scalar multiplication are continuous).
\item
The TVS space $(Y, \IH)$ is locally convex.
\end{itemize}
But
\begin{itemize}
\item
It can be hard to describe what a general open set is.
\item
The topology may be trivial, with $Y$ the only non-empty open set (see Example \ref{exj1}).
\end{itemize}

A second topology, which we shall call the non-locally convex inductive limit topology and denote $\II$,
has as a basis all sets $U$ with the property that $U \cap X_\alpha$ is open
for every $\alpha \in A$.
In general, the topologies $\IH$ and $\II$ are distinct, though G. Edgar points out in
\cite{ed73} that even Choquet did not realize this.
Topology $\II$ has advantages
\begin{itemize}
\item
It is much simpler to describe the open sets.
\item
It frequently turns out to be given by a complete metric (eg. for $N^+(\D)$ \cite{mcc90c}, or Example \ref{exj1}).
\end{itemize}
However
\begin{itemize}
\item
It may not be a vector space topology --- addition may not be continuous \cite{ed73}, where the example
is all continuous functions with compact support on $\R$, viewed as the union of the continuous functions
supported on $[-n,n]$.
\end{itemize}

It is immediate that $ (Y, \II)$ embeds continuously in $(Y, \IH)$.
Moreover, if $\Gamma$ is a linear functional on $Y$, one can check its continuity just by looking at the
preimage of convex open sets in $\C$, so the continuous linear functionals are the same on both spaces:
\[
(Y, \II)^* \= (Y, \IH)^*.
\]

\begin{example}
{\rm
\label{exj1}
An illuminating example is to
 let
$\mathcal V$ denote the space of all measurable functions on $[0,1]$, identifying those that are zero a.e.
with respect to Lebesgue measure.
For $w$ any integrable function that is positive a.e., let $L^2(w) = \{ f : \int |f|^2 w < \infty \}$.
Then $\mathcal V = \cup_{w} L^2(w)$.
The topology $\IH$ is the trivial topology, and the topology $\II$ is the topology of convergence in measure,
which is given by a complete metric.


Indeed, let us define a metric on $\mathcal V$ by
\[
\dL(f,g) \= \int_0^1 \min(1, |f(x) - g(x) |) dx .
\]
This is complete, and gives the topology of convergence in measure.
It can be shown that the topologies given by $\dL$ and  $\II$  coincide.

%
%
%
%
%
}
\end{example}

\section{Smirnov class on the disk}
\label{secend}

On the disk, the following three definitions
of the Smirnov class $N^+(\mathbb{D})$ are equivalent.

\begin{proposition}
\label{prl1}
  Let $f \in N(\mathbb{D})$. The following assertions are equivalent.
  \begin{enumerate}[label=\normalfont{(\roman*)}]
    \item $\lim_{r \to 1} \int_{0}^{2 \pi} \log^+ |f (r e^{ i \theta})| \frac{d \theta}{2 \pi}
   = \int_{0}^{2 \pi} \log^+ |f ( e^{ i \theta})| \frac{d \theta}{2 \pi}$.
 \item $\lim_{r \to 1} \int_{0}^{2 \pi} \log(1 + |f(r e^{i \theta}) - f(e^{i \theta})|) \frac{d \theta}{2 \pi} = 0$.
 \item $\lim_{r \to 1} \vertiii{f - f_r} = 0$.
  \end{enumerate}
\end{proposition}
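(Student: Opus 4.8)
The plan is to route everything through a single identity:
\emph{if $g \in N(\mathbb{D})$ satisfies
$\lim_{r\to1}\int_0^{2\pi}\log(1+|g(re^{i\theta})-g(e^{i\theta})|)\frac{d\theta}{2\pi}=0$, then
$\vertiii{g}=\int_0^{2\pi}\log(1+|g(e^{i\theta})|)\frac{d\theta}{2\pi}$.}
Before proving this I would record two standard facts. First, $\log(1+|h|)$ and $\log^+|h|$ are subharmonic for $h\in\mathcal{O}(\mathbb{D})$, since $\log|h|$ is subharmonic and $t\mapsto\log(1+e^t)$, $t\mapsto t^+$ are convex and nondecreasing; hence the circular means $r\mapsto\int_0^{2\pi}\log(1+|h(re^{i\theta})|)\frac{d\theta}{2\pi}$ and $r\mapsto\int_0^{2\pi}\log^+|h(re^{i\theta})|\frac{d\theta}{2\pi}$ are nondecreasing on $[0,1)$. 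Second, for $f\in N(\mathbb{D})$ the radial limits exist a.e., so $f_r\to f$ a.e.\ on $\mathbb{T}$, and $\log^+|f|\in L^1(\mathbb{T})$ by Fatou's lemma. To prove the identity, note that $1+|g_r|\le(1+|g|)(1+|g_r-g|)$ a.e.\ on $\mathbb{T}$ gives $\int\log(1+|g_r|)\le\int\log(1+|g|)+\int\log(1+|g_r-g|)$, so the hypothesis forces $\limsup_{r\to1}\int\log(1+|g_r|)\le\int\log(1+|g|)$, while Fatou's lemma gives $\int\log(1+|g|)\le\liminf_{r\to1}\int\log(1+|g_r|)$. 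Thus $\int\log(1+|g_r|)\to\int\log(1+|g|)$, and by the monotonicity this limit equals the supremum, which is $\vertiii{g}$.

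Granting the identity, (iii) $\Rightarrow$ (ii) and (ii) $\Rightarrow$ (iii) are short. For (iii) $\Rightarrow$ (ii), apply Fatou's lemma inside $\vertiii{\cdot}$: since
$\vertiii{f-f_r}\ge\liminf_{\rho\to1}\int_0^{2\pi}\log(1+|f(\rho e^{i\theta})-f(r\rho e^{i\theta})|)\frac{d\theta}{2\pi}\ge\int_0^{2\pi}\log(1+|f(e^{i\theta})-f(re^{i\theta})|)\frac{d\theta}{2\pi}$,
condition (iii) drives this last integral to $0$ as $r\to1$, which is (ii). For (ii) $\Rightarrow$ (iii), fix $r<1$ and put $g=f-f_r\in N(\mathbb{D})$ (here $f_r$ is bounded on $\overline{\mathbb{D}}$). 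From $|g(\rho e^{i\theta})-g(e^{i\theta})|\le|f(\rho e^{i\theta})-f(e^{i\theta})|+|f(r\rho e^{i\theta})-f(re^{i\theta})|$ together with uniform continuity of $f$ on a neighborhood of $\{|z|=r\}$, assumption (ii) for $f$ shows that $g$ satisfies the hypothesis of the identity; hence $\vertiii{f-f_r}=\int_0^{2\pi}\log(1+|f(e^{i\theta})-f(re^{i\theta})|)\frac{d\theta}{2\pi}$, which tends to $0$ as $r\to1$, again by (ii).

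Finally I would treat (i) $\Leftrightarrow$ (ii) using the elementary bounds $\log^+a\le\log(1+a)\le\log2+\log^+a$ on $[0,\infty)$. For (ii) $\Rightarrow$ (i): the proof of the identity applied to $g=f$ already yields $\int\log(1+|f_r|)\to\int\log(1+|f|)$; since $0\le\log(1+|f_r|)-\log^+|f_r|\le\log2$ with a.e.\ convergence, bounded convergence gives $\int\log^+|f_r|\to\int\log^+|f|$, i.e.\ (i). For (i) $\Rightarrow$ (ii): Scheff\'e's lemma turns (i) into $L^1(\mathbb{T})$-convergence $\log^+|f_r|\to\log^+|f|$, so $\{\log^+|f_r|\}_{r<1}$ is uniformly integrable; then $0\le\log(1+|f_r-f|)\le\log2+\log^+|f_r|+\log(1+|f|)$ exhibits $\{\log(1+|f_r-f|)\}$ as a uniformly integrable family converging to $0$ a.e., so Vitali's theorem gives $\int\log(1+|f_r-f|)\to0$, which is (ii).

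The main obstacle is the identity itself, and inside it the interplay between the supremum over the radial parameter built into $\vertiii{\cdot}$ and the boundary integral: monotonicity of the circular means supplies one inequality, Fatou's lemma the other, and these must be glued together by the submultiplicative estimate $1+|g_r|\le(1+|g|)(1+|g_r-g|)$. Everything else is routine real analysis on the finite measure space $(\mathbb{T},\frac{d\theta}{2\pi})$.
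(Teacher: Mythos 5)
Your proof is correct, but it is organized around a different pivot than the paper's. The paper proves the cycle (i) $\Rightarrow$ (iii) $\Rightarrow$ (ii) $\Rightarrow$ (i): the hard step (i) $\Rightarrow$ (iii) quotes a reference for the uniform integrability of $\{\log^+|f_r|\}$ and then applies Vitali's theorem twice (once in the radius variable $s$ inside $\vertiii{f-f_r}$, once in $r$), while (ii) $\Rightarrow$ (i) is dispatched with the pointwise inequality $|\log^+ a-\log^+ b|\le\log(1+|a-b|)$. You instead isolate the identity $\vertiii{g}=\int_{0}^{2\pi}\log(1+|g(e^{i\theta})|)\,\frac{d\theta}{2\pi}$ for any $g$ satisfying the hypothesis of (ii), obtained by squeezing the monotone circular means between the Fatou lower bound and the submultiplicative upper bound $1+|g_r|\le(1+|g|)(1+|g_r-g|)$; this yields a direct proof of (ii) $\Rightarrow$ (iii) (applied to $g=f-f_r$) that the paper does not have, and it replaces the cited uniform-integrability fact by a self-contained appeal to Scheff\'e's lemma in (i) $\Rightarrow$ (ii). The shared ingredients are the same — subharmonicity of $\log(1+|h|)$, Fatou, and a Vitali-type argument — but your version is somewhat more self-contained and makes the exact value of $\vertiii{f-f_r}$ explicit, whereas the paper's is shorter where it can lean on the literature. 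All the individual verifications (the domination $\log(1+|f_r-f|)\le\log 2+\log^+|f_r|+\log(1+|f|)$, the uniform convergence of $f_{r\rho}\to f_r$ for fixed $r<1$, and the bounded-convergence step using $0\le\log(1+a)-\log^+a\le\log 2$) check out.
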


\begin{proof}
  (i) $\Rightarrow$ (iii)
  It is known that (i) implies that $\{\log^+ |f_r| : 0 < r < 1\}$ is uniformly integrable
  over $\mathbb{T}$; see \cite[Theorem A.3.7]{EKM+14}.
  Hence $\{\log( 1 + |f_r|) : 0 < r < 1 \}$ is uniformly integrable.
  Moreover, if $g \in \mathcal{O}(\mathbb{D})$, then $\log(1 + |g|)$ is subharmonic
  because $t \mapsto \log(1 + e^t)$ is increasing and convex. Thus,
  by Vitali's convergence theorem, we find that
  \begin{align*}
    \vertiii{f - f_r} &=
    \lim_{s \to 1} \int_{0}^{2 \pi} \log (1 + |f(s e^{i \theta}) - f (r s e^{ i\theta})|) \frac{d \theta}{2 \pi} \\
                      &= \int_{0}^{2 \pi} \log (1 + |f( e^{i \theta}) - f (r e^{ i\theta})|) \frac{d \theta}{2 \pi}.
  \end{align*}
  Applying Vitali's convergence theorem again, we see that the last expression tends to $0$ as $r \to 1$, so (iii) holds.

  (iii) $\Rightarrow$ (ii) By Fatou's lemma,
  \begin{align*}
    &\quad\int_{0}^{2 \pi} \log(1 + |f(r e^{i \theta}) - f(e^{i \theta})|) \frac{d \theta}{2 \pi} \\
    &\le
    \sup_{0 < s < 1}
 \int_{0}^{2 \pi} \log(1 + |f(s r e^{i \theta}) - f(s e^{i \theta})|) \frac{d \theta}{2 \pi} \\
                                                                                            &= \vertiii{f - f_r}
                                                                                            \xrightarrow{r \to 1} 0,
  \end{align*}
  so (ii) holds.

  (ii) $\Rightarrow$ (i) From the elementary inequality $| \log^+ a - \log^+ b| \le \log(1 + |a - b|)$ for $a,b \ge 0$,
  we deduce that
  \begin{equation*}
    \int_{0}^{2 \pi} | \log^+ |f(r e^{i \theta})| - \log^+ |f( e^{i \theta})| | \frac{d \theta}{2 \pi}
    \le \int_{0}^{2 \pi} \log( 1 + | f(r e^{ i \theta}) - f(e^{ i \theta})|) \frac{d \theta}{ 2\pi},
  \end{equation*}
  so (ii) implies that $\log^+ |f_r|$ tends to $\log^+|f|$ as $r \to 1$ in $L^1(\mathbb{T})$.
  In particular, (i) holds.
\end{proof}

\black
\bibliography{commonrange}

\end{document}